\begin{document}
\title[Topological Horrocks addition]{Rank-preserving additions for topological vector bundles, after a construction of Horrocks}
\author{Morgan Opie}

\maketitle
\begin{abstract}
We produce group structures on certain sets of topological vector bundles of fixed rank. In particular, we put a group structure on complex rank $2$ bundles on $\mathbb{C}P^3$ with fixed first Chern class. We show that this binary operation coincides with a construction on locally free sheaves due to Horrocks, provided Horrocks' construction is defined. Using similar ideas, we give a group structures on certain sets of rank $3$ bundles on $\mathbb{C}P^5$.

These groups arise from the study of relative infinite loop space structures on truncated diagrams. Specifically, we show that the $(2n-2)$-truncation of an $n$-connective map $X\to Y$ with a section is a highly structured group object over the $(2n-2)$-truncation of $Y$. Applying these results to classifying spaces yields the group structures of interest.
\end{abstract}
\setcounter{tocdepth}{2}
\tableofcontents

\section{Introduction}\label{sec:intro}

Despite the importance of vector bundles in geometry and topology, there are few explicit methods to produce them. On complex projective spaces, the simplest complex bundles to write down are sums of line bundles. Indecomposable bundles are difficult to describe explicitly, but there are some famous examples: one is that of the Horrocks--Mumford bundle of rank $2$ on $\CP^4$ \cite{HorMum}; another is the Horrocks bundles of rank $3$ on $\CP^5$ \cite{Hor2}. 

In \cite{Hor}, Horrocks takes another approach and constructs new algebraic vector 
bundles from given ones using a modified extension group procedure. 
Horrocks' construction takes as input
 complex rank two algebraic bundles 
$V$ and $W$ on $\CP^3$, which must have the same first Chern class and 
satisfy some technical hypotheses, and outputs another complex rank $2$ algebraic vector 
bundle with the same first Chern class. We will write $V+_H W$ for the output bundle, 
although the construction does not define a group structure.

Atiyah and Rees show that Horrocks' construction produces essentially all topological equivalence classes of complex rank $2$ bundles on $\CP^3$ from the simplest ones.
\begin{thm}[{\cite[Theorem 1.1]{AR}}]\label{thm:horrocks_generates} Any complex rank $2$ topological vector bundle on $\CP^3$ can be obtained from a sum of line bundles by iteratively applying the following operations:
\begin{itemize}
\item tensoring by a line bundle; and
\item applying Horrocks' construction.
\end{itemize}
\end{thm}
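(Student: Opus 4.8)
The plan is to reduce the statement to a bookkeeping problem via the Atiyah--Rees classification of topological rank $2$ bundles on $\CP^3$, which is proved in \cite{AR} by obstruction theory for maps $\CP^3 \to BU(2)$ (the extra $\mathbb{Z}/2$-invariant, present when $c_1$ is even, coming from $\pi_6 S^4 \cong \mathbb{Z}/2$). That classification says a bundle $E$ is determined up to isomorphism by $(c_1(E), c_2(E)) \in \mathbb{Z}^2$ together with, when $c_1(E)$ is even, a secondary invariant $\alpha(E) \in \mathbb{Z}/2$, and that every such datum occurs. So it suffices to show that the set $\mathcal{R}$ of bundles obtainable from sums of line bundles by the two allowed operations contains a bundle with each prescribed invariant.

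The second step is to compute how the operations act on invariants. Tensoring by $\mathcal{O}(k)$ sends $(c_1, c_2, \alpha)$ to $(c_1 + 2k,\ c_2 + k c_1 + k^2,\ \alpha')$, where $\alpha'$ differs from $\alpha$ at most by a term depending only on $c_1$ and $k$; this follows from the Whitney formula and naturality of $\alpha$ under the twist self-map of $BU(2)$. For Horrocks' construction $V +_H W$, whenever the hypotheses of \cite{Hor} hold, one has $c_1(V +_H W) = c_1(V) = c_1(W)$, $c_2(V +_H W) = c_2(V) + c_2(W) + \delta(c_1)$ with $\delta(c_1)$ depending only on the common first Chern class, and --- the key point when $c_1$ is even --- $\alpha(V +_H W) = \alpha(V) + \alpha(W)$. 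These identities are read off from the monad/extension presentation that defines $+_H$: the Chern-class formulas from additivity of Chern classes in the defining exact sequences, and the formula for $\alpha$ by tracking the $\pi_6 S^4$-valued obstruction cocycle through the same presentation.

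The third step is the induction itself, done separately for the two parities of $c_1$. For $c_1$ odd, $\mathcal{O}(a) \oplus \mathcal{O}(1-a)$ realizes $(1,\, a(1-a))$, and repeatedly applying $+_H$ to admissible pairs of already-constructed bundles and then tensoring by line bundles realizes every $(1, c_2)$, hence every $(c_1, c_2)$ with $c_1$ odd. For $c_1$ even, the bundles $\mathcal{O}(a) \oplus \mathcal{O}(-a)$ realize $(0, -a^2, 0)$, so the $\alpha = 0$ locus is reached exactly as before; to reach $\alpha = 1$ one first exhibits a single bundle in $\mathcal{R}$ with $\alpha = 1$ --- for instance by applying $+_H$ to a carefully chosen admissible pair and computing $\alpha$ of the output with the formula above --- and then uses additivity of $\alpha$ under $+_H$, the freedom to shift $c_2$ by $+_H$, and the freedom to shift $c_1$ by tensoring, to fill out the entire $\alpha = 1$ locus.

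The main obstacle is the behavior of $\alpha$ under $+_H$. The Chern-class identities are routine, but $\alpha$ is genuinely secondary --- invisible to ordinary cohomology and to complex $K$-theory --- so establishing $\alpha(V +_H W) = \alpha(V) + \alpha(W)$ requires understanding Horrocks' extension construction at the level of the classifying-space data that defines $\alpha$. This is also what forces one to arrange the induction so that $+_H$ is only ever applied to pairs satisfying the technical admissibility hypotheses of \cite{Hor}, which can always be achieved by first tensoring the building blocks by a suitable line bundle to move them into a convenient numerical range; a subsidiary point is to check that the bundles produced by $+_H$ in the $\alpha = 1$ construction are themselves admissible for any further application of $+_H$.
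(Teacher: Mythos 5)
First, note that the paper does not prove this statement: it is quoted directly from Atiyah--Rees (\cite[Theorem 1.1]{AR}), so there is no internal proof to compare your argument against. On its own terms, your overall skeleton --- reduce to the classification of rank $2$ bundles on $\CP^3$ by $(c_1,c_2,\alpha)$ (Theorem~\ref{thm:AR_classification}) and then do bookkeeping of how twisting and $+_H$ move these invariants --- is the natural one and is consistent with how the present paper uses the result.

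However, there is a genuine error at the crux of your second step: the claimed formula $\alpha(V+_HW)=\alpha(V)+\alpha(W)$ is false in general. As recorded in Theorem~\ref{cor:alpha_almost_add_alg} (due to Horrocks and Atiyah--Rees), for $c_1(V)=c_1(W)=-2n$ one has additivity only when $n$ is odd or $n\equiv 0 \pmod 4$; when $n\equiv 2\pmod 4$ there is a correction term $+1$. Matching exactly this anomaly (via the quantity $\epsilon(a)$) is the whole point of the proof of Theorem~\ref{cor:alg_top_agree_somewhat} in this paper. A defect depending only on $c_1$ would not by itself ruin your generation strategy --- it can even help you reach the $\alpha=1$ classes --- but your proposal asserts the wrong formula and, worse, claims it can be ``read off'' by tracking a $\pi_6 S^4$-valued cocycle through Horrocks' presentation. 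That computation is precisely the hard content of \cite[Corollary 5.7]{AR} (a $KO$-theoretic pushforward calculation for the secondary invariant), not a routine chase, so the most difficult step of your argument is both misstated and unsubstantiated. Two further points: the $c_2$ formula needs no $\delta(c_1)$ --- $c_2$ is strictly additive under $+_H$ --- and the admissibility bookkeeping you defer is real content, not a subsidiary check: Horrocks' construction requires algebraic representatives admitting regular sections (forcing $c_1\le 0$ after twisting) whose cokernel sheaves have disjoint supports, and to iterate you must show the outputs, and indeed every class produced along the way, again admit such algebraic data. Establishing this is a substantial part of the Atiyah--Rees argument and cannot be waved through.
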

\begin{rmk} This result shows that every topological equivalence class of complex rank $2$ vector bundles on $\CP^3$ admits an algebraic representative. \end{rmk}
The algebra involved in Horrocks' construction is quite specialized and does not directly generalize to produce algebraic vector bundles of other ranks or on other spaces. 
However, we explore Horrocks' construction from a homotopical vantage point and show that, from this perspective, it does generalize.

\begin{prop}[Topological Horrocks addition]\label{prop:main_construction}
Fix an integer $a_1\in \mathbb Z$. Let $\G_{a_1}$ denote the set of topological isomorphism classes of complex rank $2$ topological bundles on $\CP^3$ with first Chern class equal to $a_1$.
\begin{enumerate}
\item[(i)] $\G_{a_1}$ carries an abelian group structure $+_{a_1}$, via an explicit construction on classifying spaces;
\item[(ii)] The identity is $L\oplus \underline{\C}$, where $L$ is the complex line bundle determined by $c_1(L)=a_1$ and $\underline \C$ is the trivial complex line bundle; and
\item[(iii)] The second Chern class defines a homomorphism $c_2\: \G_{a_1} \to H^4(\CP^3;\Z).$
\end{enumerate}
\end{prop}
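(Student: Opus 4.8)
The plan is to realize $\G_{a_1}$ as the set of path components of a section space and then apply the relative infinite loop space theorem stated above, taking $n=4$ so that $2n-2=6=\dim\CP^3$. First I would fix a map $\ell_{a_1}\colon\CP^3\to BU(1)$ classifying the line bundle $L$ with $c_1(L)=a_1$ and write $q\colon BU(2)\to BU(1)$ for the map induced by $\det\colon U(2)\to U(1)$. Its homotopy fiber is $BSU(2)$, and it carries the section $s\colon BU(1)\to BU(2)$ induced by $z\mapsto\operatorname{diag}(z,1)$, that is, by $M\mapsto M\oplus\underline{\C}$ on bundles. A complex rank $2$ bundle on $\CP^3$ lies in $\G_{a_1}$ exactly when its classifying map composes with $q$ to $\ell_{a_1}$, so $\G_{a_1}$ is the preimage of $[\ell_{a_1}]$ under $q_*\colon[\CP^3,BU(2)]\to[\CP^3,BU(1)]$. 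Since $H^1(\CP^3;\Z)=0$, the space $\operatorname{Map}(\CP^3,BU(1))=\operatorname{Map}(\CP^3,K(\Z,2))$ has trivial $\pi_1$ at every basepoint, so this preimage equals $\pi_0$ of the homotopy fiber of $\operatorname{Map}(\CP^3,BU(2))\to\operatorname{Map}(\CP^3,BU(1))$ over $\ell_{a_1}$, namely the section space $\Gamma(\CP^3,\ell_{a_1}^*BU(2))$. Finally, since $\CP^3$ is a $6$-dimensional CW complex, the truncation $BU(2)\to\tau_{\le 6}BU(2)$ induces a bijection on homotopy classes of maps out of $\CP^3$, compatibly with $q$ (note $\tau_{\le 6}BU(1)=BU(1)$, and $\tau_{\le 6}q,\tau_{\le 6}s$ form a map and section over $BU(1)$), so I may replace $BU(2)$ by $\tau_{\le 6}BU(2)$ and obtain a canonical bijection $\G_{a_1}\cong\pi_0\Gamma\bigl(\CP^3,\ell_{a_1}^*\tau_{\le 6}BU(2)\bigr)$.

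Next I would invoke the main theorem. Since $SU(2)\simeq S^3$ is $2$-connected, $BSU(2)$ is $3$-connected, so $q$ is a $4$-connective map with section, and (as $2n-2=6$) the theorem makes $\tau_{\le 6}BU(2)\to BU(1)$ a highly structured group object in spaces over $BU(1)$ — in particular $E_\infty$, hence homotopy-commutative — with identity $\tau_{\le 6}s$. Pulling back along $\ell_{a_1}$ gives a commutative group object $\ell_{a_1}^*\tau_{\le 6}BU(2)\to\CP^3$ over $\CP^3$, whose identity section corresponds to $s\circ\ell_{a_1}\colon\CP^3\to BU(2)$ and hence classifies $L\oplus\underline{\C}$. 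The functor $\pi_0\Gamma(\CP^3,-)$ is product-preserving, so it carries this commutative group object to an abelian group; this equips $\G_{a_1}$ with an abelian group structure $+_{a_1}$ with identity $[L\oplus\underline{\C}]$, giving (i) and (ii). For (iii) I would use that $c_2(L\oplus\underline{\C})=0$, so $c_2\colon BU(2)\to K(\Z,4)$ composes trivially with $s$; thus $(q,c_2)\colon BU(2)\to BU(1)\times K(\Z,4)$ is a morphism of $4$-connective maps with section over $BU(1)$ (the target being the trivial bundle $BU(1)\times K(\Z,4)\to BU(1)$ with its zero section, whose fiber $K(\Z,4)$ is again $3$-connected). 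By functoriality of the construction, $\tau_{\le 6}(q,c_2)$ is a homomorphism of group objects over $BU(1)$, so pulling back along $\ell_{a_1}$ and applying $\pi_0\Gamma(\CP^3,-)$ produces a group homomorphism $\G_{a_1}\to\pi_0\Gamma(\CP^3,\CP^3\times K(\Z,4))=[\CP^3,K(\Z,4)]=H^4(\CP^3;\Z)$ which by construction is $c_2$ — once one checks that the group structure the construction puts on $BU(1)\times K(\Z,4)$ over $BU(1)$ restricts on $H^4$ to the usual addition, which is forced by the unit axiom via an elementary cohomology computation.

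The genuinely substantive input is the relative infinite loop space theorem, which I am free to assume; granting it, everything here is a translation between the language of vector bundles and that of spaces. The steps I expect to require the most care are: checking that the hypotheses align so that $n=4$ and $2n-2=6=\dim\CP^3$ — this is precisely where the $3$-connectivity of $BSU(2)$ and of $K(\Z,4)$ enters, and where one sees that homotopy classes of maps out of $\CP^3$ detect exactly the $\tau_{\le 6}$-truncation; correctly invoking the functoriality of the construction, which is what makes the pullback along $\ell_{a_1}$ and the map $(q,c_2)$ into homomorphisms; and the single non-formal computation, identifying the construction's group structure on $BU(1)\times K(\Z,4)$ with the standard fiberwise addition. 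The identification $\G_{a_1}\cong\pi_0\Gamma(\CP^3,\ell_{a_1}^*\tau_{\le 6}BU(2))$ itself is routine, given $H^1(\CP^3;\Z)=0$ and $\dim\CP^3=6$.
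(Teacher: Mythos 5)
Your proposal is correct and follows essentially the same route as the paper: realize $\G_{a_1}$ as homotopy classes of lifts of $\mathcal O(a_1)\:\CP^3\to \BU(1)$ through the $4$-connective map $\det\:\BU(2)\to\BU(1)$ with section $\gamma_1\oplus\underline{\C}$, apply the relative group-object theorem with $2n-2=6=\dim\CP^3$, and deduce additivity of $c_2$ from functoriality applied to $c_2\:\BU(2)\to K(\Z,4)$ over the section (the paper phrases this via a square over a change of base $\BU(1)\to *$ rather than your product $\BU(1)\times K(\Z,4)$, but the content is identical). Your explicit check that $\pi_0$ of the section space agrees with the fiber of $q_*$ because $H^1(\CP^3;\Z)=0$ is a point the paper leaves implicit, but it is not a difference of method.
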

\begin{rmk} The existence of a group structure on the underlying set of $\mathcal G_{a_1}$ can also be deduced by identifying elements of $\mathcal G_{a_1}$ with twisted symplectic $K$-theory classes. It is not hard to show that this group structure agrees with ours, but our description provides a framework for comparison with Horrocks' construction and for generalization.\end{rmk}
We show $+_{a_1}$ addition agrees with Horrocks' construction as far as possible.
\begin{thm}\label{cor:alg_top_agree_somewhat}Suppose that $V$ and $W$ are two rank $2$ algebraic vector bundles such that $c_1(V)=c_1(W)=a_1$ and such that the Horrocks sum $V+_HW$ is defined. Then
$$V+_H W \simeq V +_{a_1} W$$ as topological vector bundles.
\end{thm}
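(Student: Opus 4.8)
The plan is to compare $V+_H W$ and $V+_{a_1}W$ through their classifying maps $\CP^3\to BU(2)$ — since complex vector bundles on $\CP^3$ are classified by $[\CP^3,BU(2)]$, it suffices to show these maps are homotopic — and to do so by identifying Horrocks' algebraic construction, after passage to underlying topological bundles, with the fibrewise multiplication that defines $+_{a_1}$. To set up the target side: the determinant $\det\colon BU(2)\to BU(1)\simeq\CP^\infty$ has homotopy fibre $BSU(2)$, which is $3$-connected, so $\det$ is $4$-connective, and it admits the section $L\mapsto L\oplus\underline\C$. With $n=4$ we have $2n-2=6$, so the truncation theorems of the paper apply: $\tau_{\le 6}(BU(2)\to\CP^\infty)$ is a group object over $\CP^\infty$ with a fibrewise multiplication $m$ and unit the given section. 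Since $\CP^3$ has (real) dimension $6$, the truncation loses nothing, $[\CP^3,BU(2)]=[\CP^3,\tau_{\le 6}BU(2)]$, and unwinding Proposition~\ref{prop:main_construction} gives, for $V,W\in\G_{a_1}$, the identity $V+_{a_1}W=m\circ(V,W)$, where $(V,W)\colon\CP^3\to BU(2)\times_{\CP^\infty}BU(2)$ is the induced map — which exists precisely because $c_1(V)=c_1(W)=a_1$. So the goal reduces to: the topological bundle underlying $V+_HW$ is classified by $m\circ(V,W)$.

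Next I would revisit Horrocks' construction from \cite{Hor} (in the form used by \cite{AR}) and extract its topological content. Horrocks' ``modified extension'' builds $V+_HW$ from a specific diagram of short exact sequences of coherent sheaves, and the technical hypotheses on $V$ and $W$ are exactly what forces the $\mathrm{Ext}$- and $H^1$-groups controlling the ambiguity of the construction to vanish in the relevant ranges. I would use this to show that the underlying \emph{topological} type of $V+_HW$ depends only on $[V]$ and $[W]$ and is natural in them, so that the algebraic steps forget to a homotopical recipe: an extension/difference construction which, wherever Horrocks' hypotheses can be arranged, refines a (partially defined) map of classifying spaces $BU(2)\times_{\CP^\infty}BU(2)\to\tau_{\le 6}BU(2)$ over $\CP^\infty$. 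Here I expect to reuse, essentially verbatim, the homotopy theory behind the definition of $+_{a_1}$, since that construction was built precisely as a topological model of Horrocks'; the content of this step is that, under the technical hypotheses, the algebraic data is ``as homotopy coherent as possible''.

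It then remains to identify this operation with $m$. Both are fibrewise, natural, unital operations over $\CP^\infty$, and they agree at the level of Chern classes and of topological $K$-theory: Horrocks' construction applied to a sum of line bundles returns a sum of line bundles (the defining diagram degenerates, since such bundles have the simplest cohomology), and $c_2(V+_HW)=c_2(V)+c_2(W)$ (the ``direct sum of monads'' bookkeeping, already implicit in \cite{AR}), while on the other side $c_2$ is additive by Proposition~\ref{prop:main_construction}(iii) and $c_2$ of the unit is $0$. The residual discrepancy between $m\circ(V,W)$ and the topological $V+_HW$ then lies in $H^6(\CP^3;\pi_6 BU(2))=H^6(\CP^3;\Z/2)=\Z/2$ — this is the Atiyah--Rees invariant — and I would kill it by observing that, after truncation at degree $6$, the obstructions to agreement of two such natural operations, once they agree on Chern classes and on the unit, lie in cohomology groups that vanish (the fibre $BSU(2)\times BSU(2)$ has no cohomology in degrees $5$ or $6$, and the target has no homotopy above degree $6$); alternatively one appeals directly to the rigidity of the relative $E_\infty$-structure supplied by the truncation machinery.

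The main obstacle is not this final $\Z/2$ but the step preceding it: establishing that Horrocks' construction, under the technical hypotheses, genuinely assembles topologically into a well-defined \emph{natural operation} refining a map of classifying spaces — rather than merely into a bundle-valued output defined up to isomorphism — so that the rigidity of the relative loop-space structure can actually be applied. The $K$-theoretic input is routine, and the $\Z/2$ is invisible to characteristic classes and to topological $K$-theory and hence cannot be handled by a direct computation; it has to be read off from this coherence. (If one instead argued through the Atiyah--Rees classification of rank-$2$ bundles on $\CP^3$ by $c_1$, $c_2$, and this $\Z/2$, the same difficulty reappears as the need for a formula describing how the $\Z/2$-invariant transforms under Horrocks' construction.)
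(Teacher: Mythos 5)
Your proposal contains a genuine gap at the step you yourself flag as the main obstacle, and the paper's proof is structured precisely to avoid that step. Horrocks' construction is only defined on pairs of \emph{algebraic} bundles equipped with regular sections whose degeneracy data are disjoint; it is not a priori a map of spaces, and there is no evident way to assemble it into a natural operation $\BU(2)\times_{\BU(1)}\BU(2)\to \tau_{\le 6}\BU(2)$ over $\BU(1)$ to which a rigidity argument could be applied. You do not supply this assembly, and without it the entire comparison-of-operations strategy has nothing to compare. Moreover, the rigidity step itself is incorrect as stated: two maps $\BU(2)\times_{\BU(1)}\BU(2)\to\tau_{\le 6}\BU(2)$ over $\BU(1)$ that agree on $c_1$ and $c_2$ (equivalently, after $4$-truncation of the target) differ by a class in $H^6\bigl(\BU(2)\times_{\BU(1)}\BU(2);\pi_6\BU(2)\bigr)=H^6\bigl(\BU(2)\times_{\BU(1)}\BU(2);\Z/2\bigr)$, which is nonzero (it contains $c_1^3$ and $c_1c_2$, $c_1c_2'$ mod $2$); the vanishing of $H^5$ and $H^6$ of the fibre $\BSU(2)\times \BSU(2)$ is not the relevant obstruction group. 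That this ambiguity is real, not an artifact, is visible in the Atiyah--Rees formula: $\alpha(V+_HW)=\alpha(V)+\alpha(W)+1$ when $c_1\equiv 4\pmod 8$, so ``unital, natural, and additive on Chern classes'' does not pin down the operation.

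The paper instead argues entirely through complete invariants. By Atiyah--Rees, a rank $2$ bundle on $\CP^3$ is determined by $c_1$, $c_2$, and (for $c_1$ even) $\alpha$, so it suffices to match these for $V+_HW$ and $V+_{a_1}W$. The effect of $+_H$ on $c_2$ and $\alpha$ is already in Horrocks and Atiyah--Rees; the real work is computing $\alpha(V+_{a_1}W)$. This is done by (i) showing $\alpha$ is a homomorphism for $+_0$ via the $KO$-pushforward description and the functoriality of the group structures, (ii) transporting $+_{a_1}$ to the auxiliary operation $+_{0,b}$ with $b=-a_1/2$ by tensoring with a line bundle, (iii) using the mutual-distributivity identity to write $X+_{0,b}Y=X+_0Y-_0(\mathcal O(-b)\oplus\mathcal O(b))$, and (iv) explicitly evaluating $\alpha(\mathcal O(-b)\oplus\mathcal O(b))$ via the Atiyah--Rees formula for bundles extending to $\CP^4$. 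The resulting correction term $\epsilon(a_1)$ matches the one in the Horrocks/Atiyah--Rees formula. Note also that your closing parenthetical has the difficulty backwards: the transformation law for $\alpha$ under $+_H$ is known from the literature; what must be computed is its transformation under $+_{a_1}$, and that computation cannot be bypassed by an obstruction-vanishing argument for exactly the reason above.
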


The proof of \Cref{cor:alg_top_agree_somewhat} is conceptually simple. In \cite{AR}, Atiyah and Rees show that a topological rank $2$ bundle on $\CP^3$ is determined by its first and second Chern class together with a $\Z/2$-valued invariant called $\alpha$.
So, to prove that $V+_H W$ and $V+_{a_1} W$ are topologically isomorphic, it suffices to show their Chern classes and $\alpha$-invariants agree.

The groups of \Cref{prop:main_construction} generalize as follows.
\begin{thm}[Topological Horrocks addition for rank $3$ bundles]\label{prop:main_construction2}
Fix a complex rank $2$ topological bundle $V_0$ on $\CP^5$. Let $\G_{V_0}$ denote the set of topological equivalence classes of complex rank $3$ topological bundles on $\CP^5$ with the same first and second
Chern class as $V_0$.
\begin{enumerate}
\item[(i)] $\G_{V_0}$ carries an abelian group structure, via an explicit construction on classifying spaces;
\item[(ii)] The identity is $V_0 \oplus \underline{\C};$ and
\item[(iii)] The third Chern class defines a homomorphism $c_3\: {\G}_{V_0} \to H^6(\CP^5;\Z).$
\end{enumerate}
\end{thm}
\begin{rmk} The underlying set of $\G_{V_0}$ depends only on a choice of first and second Chern class of $V_0$. However, we need an auxiliary rank $2$ bundle to define the group operation. \end{rmk}

\subsecl{Methods}{subsec:tools}

\Cref{prop:main_construction} and \Cref{prop:main_construction2} follow from a more general study of relative infinite loop space structures on truncated diagrams. We remind the reader of the following terminology.
\begin{defn}\label{defn:basic}  Let $\Top$ denote the category of topological spaces. 
\begin{enumerate}
\item[a.] The $m$-truncation functor $\tau_{m}\: \Top \to \Top_{\leq m}$ is reflection onto the full subcategory $\Top_{\leq m}$ of spaces with $X$ with $\pi_iX=0$ for $i>m$. The functor $\tau_m$ is left adjoint to the inclusion $\Top_{\leq m} \hookrightarrow \Top.$
\item[b.] A map $f\: X \to Y$ in $\Top$ is $n$-connective if the homotopy fiber of $f$ has first nonzero homotopy group in degree $n$. 
\item[c.] In any category, given objects $X$ and $Y$ and a morphism $s\: Y \to X$, let $X \cup_Y \!X$ denote the pushout of the diagram
\[\begin{tikzcd}[row sep=.2in, column sep=.2in] Y \ar[r,"s"] \ar[d,"s"]& X \\
X & 
\end{tikzcd}
\] 
Let $1\: X \to X$ denote the identity. The map $\fold\: X\cup_YX \to X$ is the dotted arrow making the following diagram commute:
\[\begin{tikzcd}[row sep=.18in, column sep=.2in] Y \ar[r,"s"] \ar[d,"s"]& X\ar[d]\ar[ddr,bend left,"1"]\\
X\ar[r]\ar[rrd,bend right, "1"] & X \cup_Y\!\! X\ar[dr,dashed,"\fold"] \\ 
& &  X 
\end{tikzcd}
\] 
\end{enumerate}
\end{defn}
In the homotopy category of spaces, our key technical result can be stated as follows.

\begin{prop}\label{prop:maps_group}
Let $f\: X \to Y$ be a map of pointed, simply connected spaces, 
with section $s\: Y \to X$. Suppose that $f$ is $n$-connective and let $m:=2n-2.$ Let $\taum$ denote the $m$-truncation functor.
Then \[\begin{tikzcd} \tau_m X \arrow[r," \tau_m f"] & \tau_m Y
\arrow[l,dashed, bend left, " \tau_m s"] \end{tikzcd}\]
is a group object in the homotopy category of $m$-truncated spaces over $\taum Y,$ with binary operation
given by truncating the diagram

\begin{center}\begin{tikzcd}[column sep=5.5em] X \times_Y X \arrow[r, dashed, bend right] & X \cup_Y X \arrow[l, "(1\cup sf) \times (sf \cup 1)"] \arrow[r, "\operatorname{fold}"] & X
\etikz where the dotted arrow exists only after truncating.
\end{prop}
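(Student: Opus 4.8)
The plan is to isolate one connectivity estimate, use it to see that a single map in the displayed span becomes an equivalence after $m$-truncation, and then read off the operation and all the group axioms from the explicit diagram. A preliminary point: one first identifies the product of $\tau_m X$ with itself in the homotopy category of $m$-truncated spaces over $\tau_m Y$. Writing $F$ for the homotopy fiber of $f$, which is $(n-1)$-connected since $f$ is $n$-connective, the section $s$ splits the long exact sequences of the fibrations $X\to Y$ and $X\times_Y X\to Y$; hence the canonical map $X\times_Y X\to(\tau_m X)\times_{\tau_m Y}(\tau_m X)$ is an isomorphism on $\pi_i$ for all $i\le m$ and has $m$-truncated target, so it \emph{is} the $m$-truncation. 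Thus $\tau_m(X\times_Y X)\simeq(\tau_m X)\times_{\tau_m Y}(\tau_m X)$, and likewise $\tau_m$ of every iterated fiber power of $X$ over $Y$ is the corresponding power of $\tau_m X$ in the slice; this is what makes ``group object over $\tau_m Y$'' meaningful with $\tau_m(X\times_Y X)$ playing the role of the square, and it genuinely uses the section, since $\tau_m$ does not commute with fiber products in general.

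\textbf{The key estimate.} Next I would show that $\phi:=(1\cup sf)\times(sf\cup 1)\colon X\cup_Y X\to X\times_Y X$ is $(2n-1)$-connected, so that $\tau_m\phi$ is an equivalence, as $m=2n-2<2n-1$. This I would check fiberwise over $Y$. Since $Y$ is simply connected and pullback preserves the pushout defining $X\cup_Y X$, the fiber of $X\cup_Y X\to Y$ is $F\vee F$, the fiber of $X\times_Y X\to Y$ is $F\times F$, and because $s$ is the fiberwise basepoint, $\phi$ restricts on fibers to the standard inclusion $F\vee F\hookrightarrow F\times F$. As $F$ is $(n-1)$-connected this inclusion is $(2n-1)$-connected by relative Hurewicz (equivalently Blakers--Massey), and a map over $Y$ that is fiberwise $(2n-1)$-connected is itself $(2n-1)$-connected. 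This is the only genuinely homotopy-theoretic input, and it is a standard estimate; it is precisely the Blakers--Massey comparison of the pushout square $X\leftarrow Y\to X$ with its pullback.

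\textbf{Operation and axioms.} With $\tau_m\phi$ invertible, put $\mu:=\tau_m(\operatorname{fold})\circ(\tau_m\phi)^{-1}\colon\tau_m(X\times_Y X)\to\tau_m X$; it lies over $\tau_m Y$ because $\operatorname{fold}$ and $\phi$ do. Precomposing $\phi$ with the two pushout inclusions $X\to X\cup_Y X$ yields $(1_X,sf)$ and $(sf,1_X)$, while $\operatorname{fold}$ precomposed with either inclusion is $1_X$; truncating, $\mu\circ(\operatorname{id},\tau_m s\circ\tau_m f)=\operatorname{id}=\mu\circ(\tau_m s\circ\tau_m f,\operatorname{id})$, so $\tau_m s$ is a two-sided unit. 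Likewise $\phi$ intertwines the two swap involutions and $\operatorname{fold}$ is swap-invariant, so $\mu$ is commutative. For associativity and inverses I would either (i) introduce the three-fold analogue $X\cup_Y X\cup_Y X\to X\times_Y X\times_Y X$, which is still fiberwise $F\vee F\vee F\hookrightarrow F\times F\times F$, hence still $(2n-1)$-connected, and chase the single associativity square back to the honest untruncated pushout and fold maps; or, more conceptually, (ii) observe that the estimate above is fiberwise Freudenthal: with $(n-1)$-connected fibers and $m=2(n-1)$, the fiberwise James map exhibits $\tau_m X$ as a fiberwise loop space over $\tau_m Y$, which is a group object over $\tau_m Y$ on the nose, and unwinding the James splitting recovers exactly the ``$\operatorname{fold}$ after $\phi^{-1}$'' formula, with inverse given by fiberwise loop reversal.

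\textbf{Main obstacle.} The connectivity estimate is the conceptual crux but is routine. What actually needs care is the surrounding bookkeeping: checking that truncation commutes with the fiber powers, so that the statement is even well-posed with $\tau_m(X\times_Y X)$ as the square, and---if one runs the direct argument (i) rather than the fiberwise loop space reformulation (ii)---verifying the associativity square in the homotopy category, which amounts to coherently matching the comparison maps $\phi$ for the two- and three-fold wedges.
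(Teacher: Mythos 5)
Your proof is essentially correct but takes a genuinely different route from the paper. The paper deliberately avoids the direct homotopy-category argument: it first proves the refinement (Proposition~\ref{prop:general_add}) by straightening $\tau_m f$ to a functor $Y\to\mathcal S_*$ landed in $n$-connective, $(2n-2)$-truncated pointed spaces, which is the essential image of $\Omega^\infty$ on spectra with homotopy in $[n,2n-2]$; the group object is then the unstraightening of the fold map $A\times A\simeq A\sqcup A\to A$ on the resulting spectrum-valued functor, and the equivalence $\tau_m(X\cup_Y X)\simeq\tau_m(X\times_Y X)$ falls out of ``coproduct equals product'' in spectra rather than from Blakers--Massey. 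Your version replaces all of that with the single fiberwise estimate that $F\vee F\to F\times F$ is $(2n-1)$-connected, which is correct and more elementary; the paper's version buys a coherent $E_\infty$-structure over $\tau_m Y$, which it needs elsewhere (Proposition~\ref{prop:general_add} itself and the functoriality statements such as Lemma~\ref{lem:loop_map}). Two spots in your write-up deserve more care, and they are exactly the ``tedious point-set'' verifications the paper is sidestepping. First, for associativity via your option (i): a map out of a homotopy pushout is \emph{not} determined in the homotopy category by its restrictions to the summands, so it does not suffice to check the two composites agree on the three copies of $X$ in $X\cup_Y X\cup_Y X$; you must exhibit honest space-level commuting squares relating $\phi_2$, $\phi_2\times 1$, and $\phi_3$ (e.g.\ a map $X\cup_YX\cup_YX\to(X\cup_YX)\times_YX$ interpolating between them) and only then invert the truncated comparison maps. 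Second, inverses are asserted but not argued in option (i); the standard fix is to note that the unit axiom forces $\mu$ to induce addition on the homotopy groups of the fibers, so the shear map $(\mu,\mathrm{pr}_2)$ is a fiberwise $\pi_*$-isomorphism, hence an equivalence, which produces the inverse. With those two points filled in, your argument is a complete and valid alternative proof of the homotopy-category statement.
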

Using the skeleton-truncation adjunction on the homotopy category of spaces, we obtain:

\begin{cor}\label{cor:add_pi_zero}
Let $f\: X \to Y$, $s\: Y \to X$, $n$, and $m$ be as in \Cref{prop:maps_group}. Let $C$ be an $m$-skeletal space and fix $c\: C \to Y.$ Let $[C,X]_c$ denote homotopy classes of maps $g\:C\to X$
\begin{equation}\label{eq:diagram7}
\begin{tikzcd}
C \ar[r, dashed, "g"]\ar[dr,"c" below] & X\ar[d,"f" left]\\
& Y
\end{tikzcd}
\end{equation}
making Diagram~\eqref{eq:diagram7} homotopy commutative. Then $[C,X]_c$ is a group with identity $s\circ c$.
\end{cor}
\begin{rmk} The previous corollary can be viewed as a relative version of Borsuk's classical construction, as given in \cite{Borsuk_Spanier}. More recently, such ideas have been used in \cite{AF18}, for example. \end{rmk}

It is rather tedious to prove \Cref{prop:maps_group} in a point-set way. Instead, we prove an $\infone$-categorical refinement. As a consequence of $n$-connectivity, the fibers of $\tau_m f\:\tau_m X \to \tau_m Y$ are infinite loop spaces and the automorphisms of the fibers induced by paths in the base are infinite loop maps; this data should assemble to make $\tau_m X \to \tau_mY$ an infinite loop space over $\tau_m Y$. The machinery of an $\infone$-categorical Grothendieck construction, for example straightening and unstraightening as in \cite{HTT}, allows us to make this precise.
\begin{prop}\label{prop:general_add}Let $f\: X \to Y$ be a map of pointed simply connected spaces with section $s\: Y \to X$. Suppose $f$ has homotopy fiber with first nonzero homotopy group in degree $n$ and let $m:=2n-2.$
The functor of infinity categories $$\Groth_*(\taum f)\: \tau_mY \to \s_*,$$ given by applying pointed straightening $\Groth_*$ to the $m$-truncation of $f$, factors as
\begin{center}
\begin{tikzcd}
\tau_mY \arrow[rr,"{\Groth_*(\taum f)}"]\arrow[d, dashed, "\exists" left]& &\s_* & \\
\sp \arrow[urr, "\,\,\,\, \Omega^\infty" below] &&& ,
\end{tikzcd}
\end{center}
where $\Sp$ denotes the category of spectra. In particular, $\Groth_*(\taum f)$ takes values in infinite loop spaces and infinite loop maps.
\end{prop}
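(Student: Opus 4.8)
The plan is to work in the straightened picture, where the claim reduces to the Freudenthal suspension theorem in the stable range. Unstraightening identifies $f$ with the functor $Y \to \s$ sending $y$ to the homotopy fiber $\mathrm{fib}_y(f)$; the section $s$ refines this to a functor $\mathcal F \colon Y \to \s_*$, $y \mapsto (\mathrm{fib}_y(f),\, s(y))$. Unwinding the definitions --- and using that the section splits $f$, so that the connecting maps $\pi_\ast Y \to \pi_{\ast-1}\,\mathrm{fib}$ all vanish --- the functor $\Groth_*(\taum f)$ is the fiberwise $m$-truncation $\mathcal F_m := \tau_{\le m}\circ\mathcal F$. Since $f$ is $n$-connective each fiber is $(n-1)$-connected, and here $n \ge 2$ because $X$ and $Y$ are simply connected and $f$ is split; hence, as $m = 2n-2 \ge n$, every value $\mathcal F_m(y)$ is an $(n-1)$-connected, $m$-truncated pointed space.

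I would then build the candidate spectral lift together with a comparison map. Set $\mathcal G := \tau_{\le m}\circ \Sigma^\infty \circ \mathcal F_m \colon Y \to \mathrm{Spectra}$, where $\tau_{\le m}$ is now the $m$-truncation of spectra; this is the fiberwise $m$-truncated suspension spectrum. The unit $\eta \colon \mathrm{id}_{\s_*} \Rightarrow \Omega^\infty\Sigma^\infty$ of the stabilization adjunction $\Sigma^\infty \dashv \Omega^\infty$ is a natural transformation of endofunctors of $\s_*$, so whiskering it with $\mathcal F_m$ and then applying $\tau_{\le m}$ produces a natural transformation
\[
\mathcal F_m \;\simeq\; \tau_{\le m}\mathcal F_m \;\xrightarrow{\ \tau_{\le m}(\eta\mathcal F_m)\ }\; \tau_{\le m}\,\Omega^\infty\Sigma^\infty\mathcal F_m \;\simeq\; \Omega^\infty\,\tau_{\le m}\Sigma^\infty\mathcal F_m \;=\; \Omega^\infty\circ\mathcal G .
\]
Here the first equivalence holds because $\mathcal F_m$ is already $m$-truncated, and the second because $\Omega^\infty$ commutes with $\tau_{\le m}$ on connective spectra: applying $\Omega^\infty$ to the fiber sequence $\tau_{\ge m+1}E \to E \to \tau_{\le m}E$ gives a fiber sequence whose left term is $m$-connected. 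So it suffices to show the displayed comparison map is an equivalence; granting that, $\Groth_*(\taum f) \simeq \Omega^\infty\circ\mathcal G$ is the asserted factorization, and in particular $\Groth_*(\taum f)$ takes values in infinite loop spaces and infinite loop maps.

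Since an equivalence between functors out of $Y$ may be checked objectwise, the previous point reduces to a purely classical statement: for every pointed, $(n-1)$-connected, $m$-truncated space $Z$, the stabilization map induces an equivalence $Z \xrightarrow{\ \sim\ } \tau_{\le m}\,\Omega^\infty\Sigma^\infty Z$. This is exactly the iterated Freudenthal suspension theorem --- $(n-1)$-connectivity of $Z$ makes $Z \to \Omega^\infty\Sigma^\infty Z$ a $(2n-1)$-connective map, hence an equivalence after $\tau_{\le 2n-2}$, while $Z = \tau_{\le m}Z$ since $Z$ is $m$-truncated. This Freudenthal bound is the sole nonformal ingredient, and it is precisely what fixes the truncation level at $m = 2n-2$, in agreement with the range used in Proposition~\ref{prop:maps_group}.

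I expect the main difficulty to be organizational rather than substantive: one must correctly identify $\Groth_*(\taum f)$ with the fiberwise truncation $\mathcal F_m$ and track the several truncation functors (on spaces and on spectra) along with their compatibilities with $\Sigma^\infty$, $\Omega^\infty$, and straightening. The $\infty$-categorical apparatus of \cite{HTT} is what keeps this under control, since it lets one present the whole construction as composites of honest functors together with a single natural transformation --- the stabilization unit --- so that coherence is automatic (an objectwise equivalence of functors out of a space is an equivalence of functors) and the genuine content collapses to the Freudenthal statement above. A secondary point requiring care is verifying that the homotopy fibers really are $(n-1)$-connected, which uses simple-connectivity of $X$ and $Y$ together with the section.
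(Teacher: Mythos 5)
Your proposal is correct and follows essentially the same route as the paper: both arguments come down to the fact that $\Omega^\infty\colon \Sp_{[n,2n-2]} \to \s_{*,[n,2n-2]}$ restricts to an equivalence between $n$-connective, $(2n-2)$-truncated spectra and spaces, so that the factorization is obtained by composing $\Groth_*(\taum f)$ with an inverse. The only difference is that the paper invokes this stable-range equivalence as a black box (\cite[Theorem 5.1.2]{MS}), whereas you reconstruct the inverse explicitly as $\tau_{\le m}\Sigma^\infty$ and verify the unit is an objectwise equivalence via Freudenthal --- which is exactly the content of the cited theorem.
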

By applying the inverse of $\Groth_*$ to $\Groth_*(\tau_m f)$, we recover the binary operation proposed in \Cref{prop:maps_group} on the homotopy category. Moreover, \Cref{prop:general_add} shows that the group object
$$ \begin{tikzcd} \tau_m X \ar[r,"\tau_m f"] &\tau_m Y\ar[l,dashed,bend left,"\tau_m s"]\end{tikzcd}$$ as in \Cref{prop:maps_group} is a grouplike $\mathbb E_\infty$-space over $\tau_m Y$, rather than just an $H$-group object.

\subsecl{Paper outline}{subsec:outline}

\Cref{sec:group_obs} serves as a set-up for the rest of the paper, including both technical results and key examples. In \Cref{subsecl:proof_general_add}, we give proofs of \Cref{cor:add_pi_zero}, \Cref{prop:maps_group}, and \Cref{prop:general_add}. \Cref{subsecl:proof_general_add} relies on $\infone$-categorical machinery, so we summarize the key facts before our proofs. The geometrically inclined reader may choose to take the proofs of \Cref{cor:add_pi_zero} and \Cref{prop:maps_group} for granted and begin their perusal with \Cref{subsec:examples}.

The next \Cref{subsec:examples} includes the key examples of group structures on sets of vector bundles, which are the focus of the rest of the paper. This includes the groups $\mathcal G_{a_1}$ of \Cref{prop:main_construction} and $\mathcal G_{V_0}$ of \Cref{prop:main_construction2}. The proof of \Cref{cor:alg_top_agree_somewhat} uses certain elementary comparisons between different groups, which we include in \Cref{subsec:const}.

In \Cref{sec:rk2p3}, we focus on rank $2$ bundles on $\CP^3$.
 \Cref{subsec:classical_horrocks} summarizes the Atiyah--Rees $\alpha$-invariant, topological classification of rank $2$ bundles on $\CP^3$, and Horrocks' construction. In \Cref{subsec:proof_main_contruction}, we complete the proof of  \Cref{prop:main_construction} by showing that $c_2$ is a group homomorphism. \Cref{subsec:proof_rk2_thm} includes the proof of \Cref{cor:alg_top_agree_somewhat}.

\Cref{sec:generalization} focuses on group structures on isomorphism classes of $3$ bundles on $\CP^5$. In \Cref{subsec:proofrk3p5} we prove part (iii) of \Cref{prop:main_construction2}. We study group structures on rank $3$ bundles in more detail in \Cref{subsecl:interesting}. We show that, in some of the groups $\G_{V_0}$ from \Cref{prop:main_construction2}, indecomposable bundles are generated by sums of line bundles under the addition operation in the group. We view this as a proof of concept: our addition operation produces interesting bundles from simple ones. We end \Cref{subsecl:interesting} with naturally arising questions about these groups of unstable vector bundles.

We also include Appendix~\ref{app:grothendieck} which supplies details of the pointed, $\infone$-categorical Grothendieck construction for spaces. This material is used in \Cref{subsecl:proof_general_add}, although the section can be read without the appendix. This material is not new, but our exposition consolidates the necessary background and provides a reference for more comprehensive literature.

\subsecl{Acknowledgements}{subsec:acknowledgements}

First and foremost I want to thank my Ph.D. advisor, Mike Hopkins, for suggesting this project and for his wisdom and support throughout my Ph.D. program. I am also grateful to Haynes Miller for his mentorship during my time in graduate school; and to both Haynes and Elden Elmanto for serving on my dissertation committee. Mike Hill's guidance and encouragement -- mathematical and practical -- were invaluable for me while improving and revising this work. This work benefited greatly from my conversations with Aravind Asok, Lukas Brantner, Hood Chatham, Jeremy Hahn, Yang Hu, Brian Shin, Alexander Smith, and Dylan Wilson. 

The author would also like the thank the referee for extremely detailed and helpful comments on earlier drafts of this work.

While working on this project, the author was supported by the National Science Foundation under Award No.~2202914.

\subsecl{Conventions}{subsec:conventions}
\begin{itemize}
\item ``Vector bundle" refers to a complex, topological vector bundle unless otherwise stated. ``Rank" refers to complex rank.
\item We write $\s$ for the category of spaces and $\op{Sp}$ for the category of spectra. We write $\s_*$ for pointed topological spaces.
\item Limits and colimits are implicitly homotopy limits and colimits. 
\item We write $\mathcal O(k)$ for the vector bundle on $\CP^n$ with first Chern class equal to $k$. 
\item Given a classifying space $\BU(n)$, we write $\gamma_n$ for the universal bundle on it.
\item We write $\underline{\C}$ for the trivial rank $1$ vector bundle on any space.

\item Given an $\infone$-category $C$ and objects $X,Y$ in $C$, we write $\op{Maps}_C(X,Y)$ for the space of maps from $X$ to $Y$.
\item The $m$-truncation functor $\tau_{m}\: \Top \to \Top_{\leq m}$ is reflection onto the full subcategory $\Top_{\leq m}$ of spaces with $X$ with $\pi_iX=0$ for $i>m$, i.e., the left adjoint to the inclusion $\Top_{\leq m} \hookrightarrow \Top.$
\item Given spaces $X$ and $Y$, we write $[X,Y]$ for homotopy classes of maps with domain $X$ and codomain $Y$.
\item A space $Y$ is said to be $m$-skeletal if, for all $X \in \Top$, $[Y,X]\xrightarrow{\cong} [Y,\tau_mX]$ via postcomposition with the $m$-truncation map $X \to \tau_{m}X$. Examples of $m$-skeletal spaces include CW complexes with cells of dimension at most $m$ and $m$-dimensional manifolds. 
\item Fix $f\:X \to Y$ in $\s$. Given $Y \in s$ and $c\: C \to Y$, let $[C,X]_c$ denote homotopy classes of maps $g\:C\to X$ such that $f\circ g=c$.
\item We say a map $f\: X \to Y$ is $n$-connective if the homotopy fiber of $f$ has first nonzero homotopy group in degree $n$. 

\end{itemize}

\section{Background and set-up}\label{sec:group_obs}

This section has two goals: to prove necessary technical results about constructing group objects and to introduce key examples of rank-preserving group structures on sets of vector bundles. These examples will be the focus of the rest of the paper; the reader primarily interested in vector bundles may safely begin with \Cref{subsec:examples}, referencing \Cref{subsecl:proof_general_add} as needed.

In \Cref{subsecl:proof_general_add}, we prove \Cref{prop:general_add} first and then deduce \Cref{prop:maps_group} and \Cref{cor:add_pi_zero}. Much of this relies on aspects of the Grothendieck construction, or straightening and unstraightening, as described in detail by Lurie \cite[2.2 and 3.2]{HTT} and summarized in
Appendix~\ref{app:grothendieck}.

\Cref{subsec:examples} applies \Cref{cor:add_pi_zero} to maps from $\CP^3$ (respectively, $\CP^5$) into diagrams involving classifying spaces of rank $2$ (respectively, rank $3$) bundles, 
yielding group structures on collections of rank $2$ bundles on $\CP^3$ 
(respectively, rank $3$ bundles on $\CP^5$). We also show that our methods give multiple a priori distinct group structures on the same set of bundles. 

In \Cref{subsec:const}, we investigate the relationship between two different examples from the previous section. The results in this subsection are needed for the proof of \Cref{cor:alg_top_agree_somewhat}.

\subsecl{Technical preliminaries}{subsecl:proof_general_add}

In this subsection we prove \Cref{prop:maps_group}, \Cref{cor:add_pi_zero}, and \Cref{prop:general_add}, starting with the latter. 
We also record several other facts about the additive structures in question, which we will use later to study specific examples.

\begin{const}\label{note:keyproperties}
For $Y$ a space, let $(\s_{/Y})_*$ denote 
$\infone$-category of pointed objects in
the $\infone$-category of spaces over $Y$, i.e., spaces over $Y$ together with a choice of section. Let $(\s_*)^Y$ denote the 
$\infone$-category of functors from the Poincare infinity groupoid of $Y$ to $\s_*$. The following facts are justified in Appendix~\ref{app:grothendieck}.

\begin{enumerate}
\item Given a map of spaces $f\: X \to Y$ with section $s\: Y \to X$, we can naturally associate a functor $\Groth_*(f)\: Y \to \s_*,$ the {\em pointed straightening of $f$}. Heuristically, $\Groth_*(f)$ takes a point $p$ in $Y$ to the homotopy fiber $f^{-1}(p)$ pointed by $s(p)$.
\item The pointed straightening functor $\St_*$ participates in an equivalence of $\infone$-categories
\begin{center} \begin{tikzcd}
({\s_{/Y}})_* \arrow[r,bend left, "\Groth_*"] & ({\s_*})^Y.\arrow[l,bend left, "\Un_*"]
\end{tikzcd}
\end{center}
(See \Cref{cor:compatible}.)
\item There is a functor $\op{forget}\:(\s_{/Y})_* \to \s_{/Y}$ given on objects by $\left(Y \to x \right) \mapsto x$ 
and a free basepoint functor $+\: \s_{/Y} \to (\s_{/Y})_*$ given on objects by $x \mapsto ( Y \to Y \sqcup x),$ 
participating in an adjunction of $\infone$-categories:

\begin{center}
\begin{tikzcd}
\s_{/Y} \ar[r, bend right, "+" below] \ar[r, phantom, "\dashv" {labl, near end}]
& (\s_{/Y})_*\,\,. \ar[l, bend right, "\forget " above]
\end{tikzcd}
\end{center}
(See \Cref{lem:pointed_adjunction}.)
\end{enumerate}
\end{const}

\begin{proof}[Proof of \Cref{prop:general_add}] For non-negative integers $a\leq b$, let $\op{Sp}_{[a,b]}$ denote the full subcategory of spectra with homotopy groups zero outside the range $[a,b]$. Similarly, let $\s_{*,[a,b]}$ denote the full subcategory of pointed spaces with homotopy groups zero outside the range $[a,b]$ (i.e. $a$-connective, $b$-truncated spaces).

Observe that $\Groth_*(\taum f)$ takes values in $\s_{*,[n,2n-2]},$ the subcategory of pointed spaces
that are $n$-connective and $(2n-2)$-truncated. By \cite[Theorem 5.1.2]{MS},
$$\Omega^\infty\: \Sp_{[n,2n-2]} \to \s_{*,[n,2n-2]}$$ is an equivalence of $\infone$-categories. So, the factorization is automatic and there exists some functor of $\infone$-categories $A\: Y \to \operatorname{Sp}$ such that there is an equivalence \begin{equation}\label{eq:deloop_functor}\Omega^\infty A
\simeq \St_*(\taum f)\end{equation} in the $\infone$-category of such functors. This factorization is given by applying the inverse to $\Omega^\infty\: \op{Sp}_{[n,2n-2]} \to \s_{*,2n-2}$ to $\Groth_*(\taum f)$ and is therefore functorial.
\end{proof}
\begin{cor}\label{cor:unstraightened_group_ob} With set-up as in  \Cref{prop:general_add}, the diagram
$$\begin{tikzcd} \tau_m X \arrow[r," \tau_m f"] & \tau_m Y\arrow[l,dashed, bend left, " \tau_m s"] \end{tikzcd}$$ is an group object in the category of $m$-truncated spaces over $\taum Y$. The binary operation is given by applying $\op{Un}_*$ to the operation on the group object $\Groth_*(\taum f)$. \end{cor}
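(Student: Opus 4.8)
The plan is to combine Proposition~\ref{prop:general_add} with the observation that $\op{Sp}$-valued functors automatically carry an (essentially unique) abelian group structure, and then transport this structure along the equivalences recorded in Notation~\ref{note:keyproperties}. Concretely: by Proposition~\ref{prop:general_add} there is a functor $A\:\taum Y \to \op{Sp}$ together with an equivalence $\Omega^\infty A \simeq \Groth_*(\taum f)$ in $(\s_*)^{\taum Y}$. The $\infone$-category $\op{Fun}(\taum Y,\op{Sp})$ is stable, since limits, colimits, and the exactness axioms are all computed pointwise in $\op{Sp}$; in particular it is semiadditive, so the forgetful functor from grouplike $\mathbb{E}_\infty$-objects (equivalently, abelian group objects) of $\op{Fun}(\taum Y,\op{Sp})$ to $\op{Fun}(\taum Y,\op{Sp})$ is an equivalence. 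Thus $A$ is canonically a grouplike $\mathbb{E}_\infty$-object, with addition given fiberwise by the sum of spectra.

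Next I would push this structure forward to pointed spaces over $\taum Y$. The functor $\Omega^\infty\: \op{Sp}\to \s_*$ is right adjoint to $\Sigma^\infty$, hence preserves limits, so postcomposition $\Omega^\infty\circ(-)\: \op{Fun}(\taum Y,\op{Sp}) \to (\s_*)^{\taum Y}$ preserves finite products and therefore carries the grouplike $\mathbb{E}_\infty$-object $A$ to a grouplike $\mathbb{E}_\infty$-object $\Omega^\infty A \simeq \Groth_*(\taum f)$ in $(\s_*)^{\taum Y}$. Now apply the equivalence $\Un_*\: (\s_*)^{\taum Y}\xrightarrow{\ \sim\ } (\s_{/\taum Y})_*$ of Notation~\ref{note:keyproperties}(2). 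An equivalence of $\infone$-categories preserves all product-indexed diagrams and hence transports grouplike $\mathbb{E}_\infty$-objects to grouplike $\mathbb{E}_\infty$-objects, so $\Un_*(\Groth_*(\taum f))$ is a grouplike $\mathbb{E}_\infty$-object of $(\s_{/\taum Y})_*$. Since $\Un_*$ is inverse to $\Groth_*$ and $\Groth_*(\taum f)$ is by construction the pointed straightening of the pointed object $(\taum f\:\taum X\to\taum Y,\ \taum s)$, this grouplike $\mathbb{E}_\infty$-object is exactly $(\taum f, \taum s)$, and its underlying multiplication is a map $\taum X\times_{\taum Y}\taum X \to \taum X$ over $\taum Y$ — precisely the binary operation asserted in the statement. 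Because $\taum X$ and $\taum Y$ are $m$-truncated by construction, this lives in $m$-truncated spaces over $\taum Y$ (where products are fiber products over $\taum Y$), and passing to the homotopy category a grouplike $\mathbb{E}_\infty$-object descends to a group object, which gives the claim.

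The step I expect to require the most care is the basepoint and base bookkeeping in the last paragraph: one must verify that, under $\Un_*$, the zero section of the grouplike $\mathbb{E}_\infty$-structure on $A$ is carried to the given section $\taum s$ of $\taum f$, so that the identity element of the resulting group object really is $\taum s$ and the operation is genuinely fiberwise over $\taum Y$ rather than merely over a point. This is where the pointed adjunction of Notation~\ref{note:keyproperties}(3) and the compatibility of $\Groth_*$ with sections in item (1) enter, and it is worth spelling out; the remaining verifications (stability of $\op{Fun}(\taum Y,\op{Sp})$, limit-preservation of $\Omega^\infty$, and the passage from an $\infone$-categorical group object to a group object in the homotopy category) are standard.
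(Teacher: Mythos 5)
Your proposal is correct and follows the same route the paper intends: the paper states this corollary without proof, as an immediate consequence of Proposition~\ref{prop:general_add}, precisely because the factorization through $\op{Sp}$ endows $\Groth_*(\taum f)$ with a canonical grouplike structure that the equivalence $\Un_*$ of Notation~\ref{note:keyproperties}(2) transports to $(\s_{/\taum Y})_*$. Your added care about the semiadditivity of $\op{Fun}(\taum Y,\op{Sp})$, the product-preservation of $\Omega^\infty$, and the identification of the zero section with $\taum s$ correctly fills in the details the paper leaves implicit.
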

\begin{proof}[Proof of \Cref{cor:add_pi_zero}]
Let $c\: C\to Y$ be a map from an $m$-skeletal space $C$ to our base space $Y$, where $m=2n-2$. Let $c_+$ denote the object\begin{center}
\begin{tikzcd}
C \sqcup \taum Y \arrow[r,"c" below] &
\taum Y \arrow[l, dashed, bend right]
\end{tikzcd}
\end{center}
given by applying $+$ as in \Cref{note:keyproperties}(3) to the object $\taum c\: C \to \taum Y $.

\Cref{prop:general_add} implies that the space
$$\operatorname{Maps}_{(\s_{*})^{\taum \!Y} }\left(\Groth_*(c_+),\Groth_* \left( \taum f\right)\right)$$
possesses the structure of a grouplike $\mathbb E_\infty$ space, i.e. infinite loop space.

By \Cref{note:keyproperties}(2) we have natural equivalences of grouplike $\mathbb E_\infty$-spaces
\begin{align*}\operatorname{Maps}_{{(\s)_*}^{\taum \!Y} }\Big(\St_*(c_+),\St_* \big( \tau_{\leq m} f\big)\Big) &\simeq \operatorname{Maps}_{ (\s_{/\taum Y})_*}\big(\Un_* \St_*(c_+), \Un_* \Groth_*(\taum f)\big) \\
&\simeq \operatorname{Maps}_{ (\s_{/\taum Y})_*} \big(C \sqcup \taum Y, \taum X\big)\\&\simeq \operatorname{Maps}_{ \s_{/\taum Y}} \big(C, \taum X\big),\end{align*}
where the third step is the definition of $c_+$ and \Cref{note:keyproperties}(3).

This implies
\begin{align*}\pi_0\operatorname{Maps}_{ \s_{/\taum Y}} \big(C, \taum X\big)& \simeq \pi_0\operatorname{Maps}_{ \s_{/Y}} \big( \sk^m C , X\big) \\
&\simeq \pi_0\operatorname{Maps}_{\sy} \big( C , X\big)\\ &\simeq [C,X]_c,\end{align*}
giving the group structure on $[C,X]_c$. Note that the identity of $\pi_0\operatorname{Maps}_{ \mathcal{S}_{/\taum Y}} \big(C, \taum X\big)$ is obtained by post-composing $c\: C \to \tau_m Y$ with $s\: \tau_m Y \to \tau_m X$. Tracing through the isomorphisms above, this shows that $s \circ c\: C \to X$ is the identity in $[C,X]_c$.
\end{proof}

We record one additional result that will be useful later.
\begin{lem}\label{lem:loop_map}
Suppose that we have a commutative solid diagram
\begin{center}
\begin{tikzcd}
X\ar[dr,"\,\,\,f" right]\ar[rr,"h"] && X',\ar[dl,"f'" left]\\
& Y\ar[ul,bend left,dashed, "s"]\ar[ur, bend right,dashed, "s'" below] &
\end{tikzcd}
\end{center}
where $f$ and $ f'$ are both $n$-connective, and $s$ and $s'$ are sections of $f$ and $f'$, respectively.
Let $m= 2n-2.$
Then $\Groth_*(\taum h)\: \Groth_*(\taum f) \to \Groth_*(\taum f')$ is a natural transformation through infinite loop maps.
\end{lem}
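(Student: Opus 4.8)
The plan is to deduce this from Proposition~\ref{prop:general_add} together with the fact that $\Omega^\infty$ carries every morphism of spectra to an infinite loop map. First I would apply $\taum$ to the given commutative diagram; this produces a commutative triangle $\taum X \to \taum X'$ over $\taum Y$ compatible with the truncated sections $\taum s$ and $\taum s'$, i.e. a morphism in $(\s_{/\taum Y})_*$. Applying the pointed straightening $\Groth_*\:(\s_{/\taum Y})_* \to (\s_*)^{\taum Y}$ of Notation~\ref{note:keyproperties}(2) to this morphism yields exactly the natural transformation $\Groth_*(\taum h)\:\Groth_*(\taum f)\Rightarrow\Groth_*(\taum f')$; naturality is immediate, and the content of the lemma is that each of its components is an infinite loop map. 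As in the proof of Proposition~\ref{prop:general_add}, since $f$ and $f'$ are $n$-connective the homotopy fibers of $\taum f$ and $\taum f'$ are $n$-connective and $(2n-2)$-truncated, so both $\Groth_*(\taum f)$ and $\Groth_*(\taum f')$, and hence the natural transformation between them, lie in the full subcategory $(\s_{*,[n,2n-2]})^{\taum Y}\subset (\s_*)^{\taum Y}$.

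Next I would invoke \cite[Theorem 5.1.2]{MS}: the functor $\Omega^\infty\:\Sp_{[n,2n-2]}\to\s_{*,[n,2n-2]}$ is an equivalence of $\infone$-categories, so post-composition with it gives an equivalence of functor categories $(\Sp_{[n,2n-2]})^{\taum Y}\simeq(\s_{*,[n,2n-2]})^{\taum Y}$. Let $A,A'\:\taum Y\to\Sp_{[n,2n-2]}$ be the lifts furnished by Proposition~\ref{prop:general_add}, so that $\Omega^\infty A\simeq\Groth_*(\taum f)$ and $\Omega^\infty A'\simeq\Groth_*(\taum f')$. Since this functor is fully faithful, there is an essentially unique natural transformation $\eta\:A\Rightarrow A'$ in $(\Sp_{[n,2n-2]})^{\taum Y}$ with $\Omega^\infty\eta\simeq\Groth_*(\taum h)$. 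For each point $p\in\taum Y$ the component $\Groth_*(\taum h)_p\simeq\Omega^\infty(\eta_p)$ is then the infinite loop map underlying the morphism of spectra $\eta_p\:A(p)\to A'(p)$; under the identifications of Proposition~\ref{prop:general_add} this component is the map induced by $\taum h$ on the homotopy fiber over $p$. Hence $\Groth_*(\taum h)$ is objectwise an infinite loop map. The argument in fact gives slightly more: after transport along the inverse of $\Omega^\infty$ the whole natural transformation is a morphism $\eta$ in $(\Sp_{[n,2n-2]})^{\taum Y}$, so even its naturality squares are squares of spectra rather than merely of spaces.

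Since Proposition~\ref{prop:general_add} does essentially all the work, the argument is formal; the only point requiring care is the bookkeeping of functoriality --- that $\taum$ and $\Groth_*$ send the given diagram to a genuine natural transformation, and that the lifts $A$, $A'$ of Proposition~\ref{prop:general_add} can be chosen compatibly, so that $\eta$ really does refine $\Groth_*(\taum h)$. All of this is subsumed by the statement that $\Omega^\infty$ is an equivalence on the relevant subcategories, whence passage to spectra and back is functorial and loses no information; no point-set model of the straightening is needed.
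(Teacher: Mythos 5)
Your argument is correct and is essentially the paper's own proof, which is the one-line observation that $\Groth_*(\taum -)$ factors through $\op{Sp}^{\tau_m Y}$; you have simply unpacked that factorization via the equivalence $\Omega^\infty\:\Sp_{[n,2n-2]}\to\s_{*,[n,2n-2]}$ from Proposition~\ref{prop:general_add}. No substantive differences.
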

\begin{proof} Note that the functor $\Groth_*(\taum -) \: (\s_{/Y})_* \to (\s_*)^{\tau_m Y}$ factors through $\op{Sp}^{\tau_m Y}$.
\end{proof}
This immediately implies:
\begin{cor}\label{cor:functoriality_fixed_base} With set-up as in \Cref{lem:loop_map}, let $C$ be an $m$-skeletal space and let $c\: C\to Y$ be given. The map $h\circ -\: [C,X]_c \to [C,X']_c$ is a group homomorphism.
\end{cor}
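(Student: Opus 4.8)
The plan is to derive Corollary~\ref{cor:functoriality_fixed_base} from Lemma~\ref{lem:loop_map} by essentially running the argument of Corollary~\ref{cor:add_pi_zero} in a way that is natural in the target. Recall that in the proof of Corollary~\ref{cor:add_pi_zero} the group structure on $[C,X]_c$ arose from the chain of equivalences of grouplike $\mathbb E_\infty$-spaces
\begin{align*}
[C,X]_c &\simeq \pi_0\operatorname{Maps}_{\s_{/\taum Y}}(C,\taum X)\\
&\simeq \pi_0\operatorname{Maps}_{(\s_*)^{\taum Y}}\big(\Groth_*(c_+),\Groth_*(\taum f)\big),
\end{align*}
and the last mapping space is an infinite loop space precisely because $\Groth_*(\taum f)$ factors through $\Omega^\infty$ by Proposition~\ref{prop:general_add}. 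So the first step is to observe that all of these identifications are natural in the object $\big(\taum f\colon \taum X\to\taum Y, \taum s\big)$ of $(\s_{/Y})_*$: the straightening/unstraightening equivalence of Notation~\ref{note:keyproperties}(2), the free-basepoint adjunction of Notation~\ref{note:keyproperties}(3), and the skeleton--truncation adjunction are all functorial, so a morphism in $(\s_{/Y})_*$ between two such objects induces a compatible map on the entire chain of mapping spaces.

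Next I would feed $h$ into this. The solid diagram in Lemma~\ref{lem:loop_map}, after applying $\taum$, gives a morphism $\taum h$ in $(\s_{/Y})_*$ from $(\taum f,\taum s)$ to $(\taum f',\taum s')$, hence a morphism $\Groth_*(\taum h)\colon \Groth_*(\taum f)\to\Groth_*(\taum f')$ in $(\s_*)^{\taum Y}$. Post-composition with $\Groth_*(\taum h)$ induces a map
$$
\operatorname{Maps}_{(\s_*)^{\taum Y}}\big(\Groth_*(c_+),\Groth_*(\taum f)\big)\longrightarrow \operatorname{Maps}_{(\s_*)^{\taum Y}}\big(\Groth_*(c_+),\Groth_*(\taum f')\big),
$$
and by the naturality just noted, under the chain of equivalences this corresponds to the map $\operatorname{Maps}_{\s_{/\taum Y}}(C,\taum X)\to\operatorname{Maps}_{\s_{/\taum Y}}(C,\taum X')$ given by post-composition with $\taum h$, which on $\pi_0$ is exactly $h\circ-\colon [C,X]_c\to[C,X']_c$ (again using that $C$ is $m$-skeletal and the skeleton--truncation adjunction to replace $\taum h$ by $h$). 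It remains to see this map of spaces is a map of grouplike $\mathbb E_\infty$-spaces, so that $\pi_0$ is a group homomorphism. This is where Lemma~\ref{lem:loop_map} does the real work: it tells us $\Groth_*(\taum h)$ is a natural transformation of infinite loop maps, i.e.\ it lifts along $\Omega^\infty$ to a morphism $A\to A'$ in $\op{Sp}^{\taum Y}$. Therefore post-composition with $\Groth_*(\taum h)$ is $\Omega^\infty$ of a map of spectrum-valued mapping objects, hence an infinite loop map; passing to $\pi_0$ gives a group homomorphism.

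I expect the only genuinely delicate point to be bookkeeping the naturality of the equivalences in Notation~\ref{note:keyproperties}(2)--(3) and of Proposition~\ref{prop:general_add} in the variable $(\taum f,\taum s)$ — specifically, checking that the factorization $\Groth_*(\taum f)\simeq\Omega^\infty A$ can be chosen compatibly for $f$ and $f'$ so that the induced maps of mapping spaces are honestly infinite loop maps rather than merely maps that happen to be $\Omega^\infty$ of something degreewise. But this is precisely the content of the remark in the proof of Proposition~\ref{prop:general_add} that the factorization is functorial (obtained by applying the inverse of the equivalence $\Omega^\infty\colon\op{Sp}_{[n,2n-2]}\to\s_{*,[n,2n-2]}$), together with the parenthetical observation in the proof of Lemma~\ref{lem:loop_map} that $\Groth_*(\taum-)$ factors through $\op{Sp}^{\tau_m Y}$ on the whole category $(\s_{/Y})_*$. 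So the argument is really a matter of assembling these functoriality statements rather than proving anything new; I would state it in two or three sentences: apply $\Groth_*(\taum-)$ to the diagram of Lemma~\ref{lem:loop_map}, use that it lands in $\op{Sp}^{\taum Y}$, take mapping spectra out of $\Groth_*(c_+)$, apply $\Omega^\infty$, and identify the resulting infinite loop map on $\pi_0$ with $h\circ-$ via the chain of equivalences in the proof of Corollary~\ref{cor:add_pi_zero}.
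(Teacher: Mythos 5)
Your argument is correct and is exactly the route the paper intends: the paper offers no written proof beyond ``This immediately implies,'' and your write-up simply makes explicit the naturality of the chain of identifications from the proof of Corollary~\ref{cor:add_pi_zero} together with the fact that $\Groth_*(\taum h)$ lifts to a map of spectrum-valued functors, so that post-composition is an infinite loop map and hence a group homomorphism on $\pi_0$.
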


We now explicitly describe the group operation of \Cref{cor:unstraightened_group_ob} in the homotopy category of spaces by unraveling the binary operation on $\Un_*\Groth_*(\tau_mf)$.

\begin{proof}[Proof of \Cref{prop:maps_group}]
Recall the $\fold$ map from \Cref{defn:basic}(c). From the proof of \Cref{prop:general_add}, Equation~\eqref{eq:deloop_functor} supplied a functor 
$A\: \taum Y \to \op{Sp}$ such that $\Groth_*(\taum f) = \Omega^\infty A$. The group object structure on $A$ is given by
$$A \times A \simeq A \sqcup A \xrightarrow{\fold}A.$$
Applying $\Un_*$, we have a homotopy commutative diagram

\begin{equation}\label{eq:adds_agree}
\begin{tikzcd}[row sep =.2in, column sep = .35in]
\taum X \times_{\taum Y} \taum X\arrow[d,"\simeq"]\arrow[r,"\simeq"]& 
\Un_*\!\Omega^\infty (\!A\!\times\! A) &
\Un_*\! \Omega^\infty (\!A\! \sqcup \! A)\arrow[dr, phantom,"\star"]\arrow[l,"\simeq"]\arrow[r,"\Un_*\!\Omega^\infty\!\fold\!"]\arrow[d,"\simeq"] & 
\Un_*\! \Omega^\infty A\arrow[d,"\simeq"]\\
\taum (X\! \times_Y \!X) &
 &
\taum X \! \cup_{\taum \! Y } \! \taum X\arrow[r,"\fold"]\arrow[d,"\dagger"]\arrow[dr,phantom,"\star\star"] 
& \taum X\arrow[d,"\dagger"]\\
\taum(X\! \cup_Y \!\!X)\arrow[u,"\simeq", "\tau_m(sf\times 1 \cup 1 \times sf)" right]\arrow[rr,"*"] 
& 
&\taum (\taum X \! \cup_{\taum \! Y} \!\taum X )\arrow[r,"\taum\!\small\fold" ] 
& \taum X,
\end{tikzcd}
\end{equation}
where the maps labeled $\dagger$ above are the natural map from a space into its truncation and the map
labeled $*$ is the $\taum$-truncation of the map on pushouts induced by the map of spans
\begin{equation}\label{eq:star}
\begin{tikzcd}
X\arrow[d] & Y\arrow[r,"s" above]\arrow[l,"s"]\arrow[d] & X\arrow[d]\\
\taum X & \taum Y\arrow[r,"\taum s" above]\arrow[l,"\taum s"] & \taum X,
\end{tikzcd}
\end{equation}with the vertical arrows are again the natural maps from a space to its truncation.

In Diagram~\eqref{eq:adds_agree}, arrow $\tau_m(sf\times 1 \cup 1 \times sf)$ is an equivalence by the hypothesis that $f$ is $n$-connective. Inspecting the diagram, it is clear that the two right rectangles labeled $\star$ and $\star\star$ commute up to homotopy, by naturality. One then checks that starting at the bottom left corner, going up and around the large left rectangle produces the map $*$
induced by \eqref{eq:star}.

The composite obtained by starting at the upper left-hand corner of the outer rectangle, going down by two, and then right by two computes the proposed operation on $\taum X$ over $\taum Y$. On the other hand, starting in the upper left-hand corner but going first
right by two and then down by two computes the group object structure obtained in \Cref{cor:unstraightened_group_ob}.
\end{proof}

From this description of the group structure, we obtain group homomorphisms from homotopy classes of maps over one base to homotopy classes of maps over another base. First, note the following elementary lemma.

\begin{lem} Consider diagram

\begin{equation}\label{eq:functoriality_2}
\begin{tikzcd}
X\ar[d,"f"]\ar[r,"h"] & X'\ar[d,"f'" left] \\
Y\ar[r,"j"] \ar[u, bend left, "s"]& Y'\ar[u, bend right, "s'" right]
\end{tikzcd}
\end{equation}
such that $hs \simeq s'j$, $jf \simeq f'h$ and such that $s$ and $s'$ are sections of $f$ and $f'$, respectively. Such a diagram gives rise to a homotopy commutative diagram
\begin{center}
\begin{tikzcd}[row sep=.1in]
X \times_Y X \ar[d]& X \cup_Y\ar[l]\ar[r]\ar[d] X & X\ar[d] \\
X' \times_{Y'} X' & X' \cup_{Y'} X'\ar[l]\ar[r] & X'.
\end{tikzcd}
\end{center}

\end{lem}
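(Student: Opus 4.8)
The plan is to recognize every corner of the target diagram as a homotopy (co)limit of a small diagram assembled from $f$ and $s$ (respectively $f'$ and $s'$), and then to invoke functoriality of homotopy pullbacks and pushouts in $\s$. Recall that $X\times_Y X$ is the pullback of the cospan $X\xrightarrow{f}Y\xleftarrow{f}X$, that the pushout $X\cup_Y X$ occurring in Proposition~\ref{prop:maps_group} is the colimit of the span $X\xleftarrow{s}Y\xrightarrow{s}X$ formed using the section --- it is precisely the identity $fs\simeq\mathrm{id}_Y$ that makes $1\cup sf$, $sf\cup 1$, and the fold map well defined --- and that the right-hand entry is $X$ itself, equipped with the fold $X\cup_Y X\to X$.

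First I would construct the three vertical maps. The square \eqref{eq:functoriality_2} together with the hypothesis $jf\simeq f'h$ exhibits the triple $(h,j,h)$ as a morphism of cospans from $X\xrightarrow{f}Y\xleftarrow{f}X$ to $X'\xrightarrow{f'}Y'\xleftarrow{f'}X'$, so functoriality of pullbacks supplies the left vertical $X\times_Y X\to X'\times_{Y'}X'$. Likewise the hypothesis $hs\simeq s'j$ exhibits $(h,j,h)$ as a morphism of spans from $X\xleftarrow{s}Y\xrightarrow{s}X$ to $X'\xleftarrow{s'}Y'\xrightarrow{s'}X'$, so functoriality of pushouts supplies the middle vertical $X\cup_Y X\to X'\cup_{Y'}X'$. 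The right vertical is $h$.

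Next I would check that the two squares commute up to homotopy. For the right-hand square, both composites $X\cup_Y X\to X'$ restrict on each of the two summands to $h$ --- since the two fold maps restrict to identities while the pushout map restricts to $h$ on each summand --- and the homotopy witnessing agreement over $Y$ is carried along by naturality. For the left-hand square, one compares the two composites $X\cup_Y X\to X'\times_{Y'}X'$; on each summand they agree provided $h\circ s\circ f\simeq s'\circ f'\circ h$ (the discrepancy appears only in the coordinate built from $sf$). This homotopy is obtained by concatenating the two hypotheses: precomposing $hs\simeq s'j$ with $f$ gives $hsf\simeq s'jf$, and postcomposing $jf\simeq f'h$ with $s'$ gives $s'jf\simeq s'f'h$.

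The only ingredient here that is not formal diagram chasing is this last concatenation, which is immediate. The step I would treat most carefully is the coherence bookkeeping --- that the pushout and pullback maps really are the ones induced in $\s$, and that the homotopies filling the two squares are compatible with the structure data of $X\cup_Y X$ --- but since we work throughout in $\s$, where pushouts and pullbacks are genuine colimits and limits, all such coherences are supplied automatically by functoriality of (co)limits, and in any event only homotopy-commutativity is asserted.
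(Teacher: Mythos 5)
Your proof is correct and is exactly the intended argument: the paper states this lemma as "elementary" and omits its proof entirely, so the functoriality-of-(co)limits argument you give — inducing the verticals from the morphisms of cospans and spans determined by $(h,j,h)$, and checking the left square via the concatenation $hsf\simeq s'jf\simeq s'f'h$ — is precisely what is implicitly being invoked.
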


\begin{cor}\label{cor:group_hom_2} Let $h\: X \to X'$ be as in Diagram~\eqref{eq:functoriality_2}. Suppose also that $f$ and $f'$ are $n$-connective and let $m=2n-2$. Let $C$ be $m$-skeletal and let $c\: C \to Y$ be given.
Then postcomposition with $h$ induces a group homomorphism $h\circ -\: [C,X]_c \to [C,X']_{j\circ c}.$
\end{cor}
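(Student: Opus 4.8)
The plan is to deduce Corollary~\ref{cor:group_hom_2} from the already-established functoriality of the $\mathbb E_\infty$-structures in a way parallel to how Corollary~\ref{cor:add_pi_zero} was deduced from Proposition~\ref{prop:general_add}. The key observation is that everything in sight is obtained by applying $\Groth_*(\tau_m(-))$, which by Lemma~\ref{lem:loop_map} lands in $\op{Sp}^{\tau_m Y}$ (resp. $\op{Sp}^{\tau_m Y'}$), and the only genuinely new ingredient compared to Lemma~\ref{lem:loop_map} and Corollary~\ref{cor:functoriality_fixed_base} is that the base is allowed to change along $j\: Y \to Y'$. So the first step is to record the comparison functor $j^*\: \op{Sp}^{\tau_m Y'} \to \op{Sp}^{\tau_m Y}$ (restriction along $\tau_m j\: \tau_m Y \to \tau_m Y'$, i.e.\ along the induced map of Poincar\'e $\infty$-groupoids), and to note that it is a product-preserving functor of $\infty$-categories, hence carries grouplike $\mathbb E_\infty$-objects to grouplike $\mathbb E_\infty$-objects and $\mathbb E_\infty$-maps to $\mathbb E_\infty$-maps. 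Under straightening/unstraightening, $j^*$ corresponds to pullback along $j$ on the slice categories $(\s_{/Y'})_* \to (\s_{/Y})_*$, which is the content of the elementary lemma preceding the corollary (the map of spans in Equation~\eqref{eq:functoriality_2} induces the displayed map of pushout diagrams, and hence a map of the binary operations described in Proposition~\ref{prop:maps_group}).

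Next I would set up the mapping-space argument. As in the proof of Corollary~\ref{cor:add_pi_zero}, write $c_+\: C \sqcup \tau_m Y \to \tau_m Y$ for the free pointed object over $\tau_m Y$ on $\tau_m c$, and similarly $(j\circ c)_+$ over $\tau_m Y'$. The commutative diagram~\eqref{eq:functoriality_2}, combined with $hs \simeq s'j$, produces a map of pointed objects over the map $\tau_m j$ of bases; concretely, $h$ together with $j$ gives a morphism in the total category $\int_{\s} (\s_{/-})_*$, and applying $\Groth_*(\tau_m(-))$ gives a morphism $A \to j^* A'$ of grouplike $\mathbb E_\infty$-objects in $\op{Sp}^{\tau_m Y}$, where $A = \Groth_*(\tau_m f)$ delooping datum and $A' = \Groth_*(\tau_m f')$ (using Proposition~\ref{prop:general_add} to know both are $\Omega^\infty$ of spectrum-valued functors). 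Then
\[
\operatorname{Maps}_{(\s_*)^{\tau_m Y}}\!\big(\Groth_*(c_+),\, \Groth_*(\tau_m f)\big) \longrightarrow \operatorname{Maps}_{(\s_*)^{\tau_m Y'}}\!\big(\Groth_*((j\circ c)_+),\, \Groth_*(\tau_m f')\big)
\]
is the composite of the restriction $j^*$ on mapping spaces (which is a map of infinite loop spaces because $j^*$ is product-preserving) with postcomposition by the $\mathbb E_\infty$-map $A \to j^*A'$ (an infinite loop map by Corollary~\ref{cor:functoriality_fixed_base}'s argument, since postcomposition with an $\mathbb E_\infty$-map of $\mathbb E_\infty$-objects is $\mathbb E_\infty$). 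Hence this composite is an infinite loop map, in particular a homomorphism on $\pi_0$.

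Finally I would identify $\pi_0$ of the two mapping spaces with $[C,X]_c$ and $[C,X']_{j\circ c}$ respectively, exactly via the chain of equivalences in the proof of Corollary~\ref{cor:add_pi_zero} (unstraightening $\Un_*$, the defining property of $c_+$, and the free-basepoint adjunction of Notation~\ref{note:keyproperties}(3), together with the skeleton--truncation adjunction $[\,\tau_m Y\text{-objects}\,] \cong [\,Y\text{-objects}\,]$ for the $m$-skeletal $C$). Unwinding the definitions, the map on $\pi_0$ induced by the composite above is precisely postcomposition with $h$, which is therefore a group homomorphism $[C,X]_c \to [C,X']_{j\circ c}$. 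The only point requiring a small amount of care — and the step I expect to be the main (mild) obstacle — is bookkeeping the change-of-base: checking that the restriction functor $j^*$ on $\infty$-categories of functors really is the unstraightening-counterpart of pullback of pointed slice objects along $j$, and that under this identification the morphism produced from Diagram~\eqref{eq:functoriality_2} is the one whose $\pi_0$-effect is postcomposition with $h$. This is a diagram-chase of the same flavor as the proof of Proposition~\ref{prop:maps_group}, using the elementary lemma on spans stated just above the corollary, and involves no new ideas beyond naturality of straightening/unstraightening.
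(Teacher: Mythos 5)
Your proposal is correct in outline, but it takes a genuinely different route from the paper. The paper gives no separate proof of this corollary at all: it is meant to follow immediately from the preceding elementary lemma, which says that Diagram~\eqref{eq:functoriality_2} induces a map of the span/cospan zigzags $X \times_Y X \leftarrow X\cup_Y X \to X$ and $X'\times_{Y'}X' \leftarrow X'\cup_{Y'}X' \to X'$, combined with the explicit fold-map description of the binary operation from Proposition~\ref{prop:maps_group}; postcomposition with $h$ then visibly commutes with the two additions at the level of the homotopy category, and that is the whole argument. You instead re-run the $\infty$-categorical straightening argument with a change of base, producing a map $\Groth_*(\tau_m f)\to j^*\Groth_*(\tau_m f')$ of spectrum-valued functors and passing through mapping spaces as in the proof of Corollary~\ref{cor:add_pi_zero}. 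This is more machinery than the paper needs here, but it buys a stronger statement (an infinite loop map, not merely a $\pi_0$-homomorphism), and it is the natural companion to Lemma~\ref{lem:loop_map} and Corollary~\ref{cor:functoriality_fixed_base}. One point to tighten: "the restriction $j^*$ on mapping spaces" is not quite the right formulation, since $j^*\Groth_*((j\circ c)_+)$ is not $\Groth_*(c_+)$; the clean statement is that the pointed pushforward (left adjoint) sends $c_+$ to $(j\circ c)_+$, so the desired map is postcomposition with $A\to j^*A'$ followed by the adjunction equivalence $\operatorname{Maps}_{\tau_m Y}\bigl(\Groth_*(c_+), j^*A'\bigr)\simeq \operatorname{Maps}_{\tau_m Y'}\bigl(\Groth_*((j\circ c)_+), A'\bigr)$, with product-preservation of the right adjoint $j^*$ guaranteeing compatibility of the $\mathbb E_\infty$-structures. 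You correctly flag this bookkeeping as the main obstacle, and it is fixable, so there is no genuine gap.
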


\subsecl{Group structures on vector bundles}{subsec:examples}

The goal of this section is to introduce and explore examples of groups comprised of unstable vector bundles. First, we note the following general procedure for obtaining group structures.
\begin{example}\label{ex:general_example} Let $g\: X \to Y$ be a map of connected spaces. Consider the diagram

\begin{center}
\begin{tikzcd}
X \times_Z Y\ar[r]\ar[d, "p_1"] & Y\ar[d, "\tau_k"] \\
X\ar[r, "\tau_k \circ g"]\ar[u,dashed, bend left, "s_g" left] & \tau_k Y ,
\end{tikzcd}
\end{center}
where $Z:= \tau_k Y$, $p_1$ is the natural projection, and the section $s_g$ from $X$ to the pullback is induced by the the identity on $X$ and $g\:X \to Y$. The map $p_1$ is $k+1$-connective since $\tau_k$ is. By \Cref{cor:unstraightened_group_ob}, $\tau_{2k}f\:\tau_{2k}\left(X \times_Z Y\right) \to \tau_{2k} X$ is a group object in $2k$-truncated spaces over $\tau_{2k}X$. So, for any $2k$-skeletal space $C$ with $c\: C\to X$, the set $[C,X]_c$ inherits a group structure.
\end{example}

The next two examples are the ones that relate to Horrocks' construction \cite{Hor}.
\begin{example}\label{ex:sum_nontrivial} The map $\det(\gamma_2)\: \BU(2) \to \BU(1)$ is $4$-connective and admits a section $\left(\gamma_1\otimes \mathcal O(\smallminus b)\right) \oplus \mathcal O(b)$ where $b\in \Z$ is arbitrary. If we fix $a_1 \in \mathbb Z$, this gives a group structure on isomorphism classes of rank $2$ bundles on $\CP^3$ with $1$-st Chern class equal to $a_1$. The identity $\mathcal O(a_1\smallminus b)\oplus \mathcal O(b).$
\end{example} 
As a special case, setting $b=0$, we obtain the following:
\begin{example}\label{ex:sum_trivial} The determinant map $\det(\gamma_2)\: \BU(2)\to \BU(1)$ is $4$-connective and admits a section represented by the bundle $\gamma_1\oplus \underline \C$ on $\BU(1)$. If we fix $\mathcal O(a_1)\: \CP^3 \to \BU(1)$ for $a_1\in \mathbb Z$, this gives a group structure on isomorphism classes of rank $2$ bundles on $\CP^3$ with $1$-st Chern class equal to $a_1$. 
The identity is $\mathcal O(a_1)\oplus \underline{\C}.$
\end{example}

We will explore the previous two examples and how they are related in \Cref{subsec:const}, so we establish notation for each.

\begin{defn}\label{not:particular_sums} Let $+_{a_1,b}$ denote the operation on rank $2$ bundles on $\CP^3$ with first Chern class $a_1$ given in \Cref{ex:sum_nontrivial}. We write $\G_{a_1,b}$ for this group. In the case $b=0$ (i.e., \Cref{ex:sum_trivial}), we write $+_{a_1}$ for the operation and $\G_{a_1}$ for the group.
\end{defn}
\begin{const}\label{rmk:iso}Let $b=-\frac{a_1}{2}.$ Consider the commutative diagram
\begin{equation}\label{eq:two_adds_compare}
\begin{tikzcd}[column sep=6em]
\BU(2)\arrow[d,"\det(-)" right]\ar[r,"(-)\otimes \mathcal O(\smallminus b)"] & \BU(2) \arrow[d,"\det(-)" left]\\
\BU(1)\arrow[u,dashed,bend left, "(-) \oplus \underline \C"]\ar[r,"\operatorname{id}" below] & \BU(1) \ar[u, dashed, bend right, "\left(-\otimes \mathcal O(\smallminus b)\right)\oplus \mathcal O(b)" right]
\end{tikzcd}
\end{equation}
where for maps we write the functor represented on vector bundles. By \Cref{cor:group_hom_2}, Diagram~\eqref{eq:two_adds_compare} induces a group isomorphism 
$\phi\: \G_{a_1} \to \G_{0,b}$ given on bundles by $$V\mapsto V \otimes \mathcal O(\smallminus b).$$ 
\end{const}

Next, we introduce group structures for isomorphism classes of rank $3$ bundles on $\CP^5$, which will be our subject in \Cref{sec:generalization}.
\begin{example}\label{ex:rk3p5}
Consider the following diagram
\begin{equation}\label{eq:tilde_diagram1}
\begin{tikzcd}[column sep=3em]
\tBU(3)\arrow[dr, phantom, near start, "\scalebox{1}{\color{black}$\lrcorner$}"]\arrow[d,"v"]\arrow[r]& \BU(3)\arrow[d,"c_1\times c_2"]\\
\BU(2)\arrow[r,"c_1\times c_2"]\arrow[u,bend left,dashed, "s"] & K(\Z,2) \times K(\Z,4),
\end{tikzcd}
\end{equation}
where:
\begin{itemize}
\item $c_1\times c_2$ is the product of the first Chern class and second Chern class;
\item $\tBU(3)$ is the indicated homotopy pullback; and
\item the dotted arrow $s$ to the pullback induced by the identity map on $\BU(2)$ and by $\gamma_2 \oplus
\underline{\C}\: BU(2)\to BU(3)$.
\end{itemize}
The map $v$ 
is $6$-connective. By \Cref{cor:add_pi_zero}, for any fixed $V_0 \: \CP^5 \to \BU(2)$, the set of homotopy classes of lifts $[\CP^5,\tBU(3)]_{V_0}$ carries a group structure. 
A priori, an element in $[\CP^5, \tBU(3)]_{V_0}$ consists of a homotopy class 
$\CP^5\to \BU(3)$ and a homotopy witnessing $c_i(V)=c_i(V_0)$ for $i=1,2$. 
Such homotopies are a torsor for $H^1(\CP^5;\Z) \times H^3(\CP^5;\Z) \simeq 0$. 
Therefore the underlying set of this group is in fact isomorphism classes of rank $3$ bundles on $\CP^5$ with the same first and second Chern class as $V_0$. The identity is $V_0 \oplus \underline{\mathbb C}$.
\end{example}
\Cref{ex:rk3p5} is our focus in \Cref{sec:generalization}, 
so we introduce the following terminology.
\begin{defn}\label{GV0} Let $V_0$ be a fixed rank $2$ bundle on $\CP^3$. Let $+_{V_0}$ denote the operation on rank $3$ bundles on $\CP^5$ with first and second Chern class equal to those of $V_0$, as defined by \Cref{ex:rk3p5}. We write $\mathcal G_{V_0}$ for this group.
\end{defn}
\begin{conv}We will refer to the binary operations of \Cref{not:particular_sums} and \Cref{GV0} as {\em topological Horrocks additions}. \end{conv}

\begin{rmk} To generalize topological Horrocks addition
 from rank $2$ bundles to rank $3$ bundles, one might hope to consider isomorphism classes of rank $3$ vector bundles on $\CP^5$ with fixed $c_1$ and $c_2$. However, the additional choice of $V_0$ is necessary. To apply our setup, we would need a section $\sigma$ of $c_1\times c_2\: BU(3) \to K(\Z,2)\times K(\Z,4)$.

In 
$H^*(\BU(3);\Z/3\Z)$, $ P^1(c_2) = c_1^2c_2 + c_2^2 -c_1c_3.$ In 
$H^*(K(\Z,2) \times K(\Z,4);\Z/3\Z)$, $P^1(\iota_4)=0.$
Since $(c_1\times c_2)^*(\iota_{2i}) = c_i$, for $i=1,2$, 
existence of $\sigma$ would force
 $\sigma^*(P^1(c_2))=P^1(\iota_4)$ and $\iota_2^2\iota_4 + \iota_4^2=0,$ a contradiction.\end{rmk}

\subsecl{Properties of $+_{a_1}$ and $+_{a_1,b}$}{subsec:const}

Recall from \Cref{not:particular_sums} that $+_{a_1}$ and $+_{a_1,b}$ are binary operations, with different identities, on the set of isomorphism classes of rank $2$ vector bundles over $\CP^3$ with first Chern class $a_1$. 
\begin{lem} Let $V,W,Z$ be rank $2$ vector bundles on $\CP^3$ with first Chern class $a_1$. Then
$$(V+_{a_1}W)+_{a_1,b}Z = V+_{a_1}(W+_{a_1,b} Z).$$
\end{lem}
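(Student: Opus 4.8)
The plan is to establish a stronger statement, namely that $+_{a_1,b}$ differs from $+_{a_1}$ by translation by a single element of $\G_{a_1}$; the mixed associativity then drops out of the associativity of $+_{a_1}$ alone. Write $f=\det(\gamma_2)\colon\BU(2)\to\BU(1)$, which is $4$-connective, so $n=4$ and $m=6$, and recall that $\CP^3$ is $6$-skeletal and $\BU(1)$ is $6$-truncated. Let $s$ and $s'$ be the two sections of $f$ from Examples~\ref{ex:sum_trivial} and~\ref{ex:sum_nontrivial}, represented by $\gamma_1\oplus\underline\C$ and $(\gamma_1\otimes\mathcal O(\smallminus b))\oplus\mathcal O(b)$, and put $c=\mathcal O(a_1)\colon\CP^3\to\BU(1)$. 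By Definition~\ref{not:particular_sums} and Corollary~\ref{cor:add_pi_zero}, $+_{a_1}$ and $+_{a_1,b}$ are the two group structures produced on the common underlying set $[\CP^3,\BU(2)]_c$ by the pointed objects $(\taum\BU(2),\taum s)$ and $(\taum\BU(2),\taum s')$ over $\taum\BU(1)$.

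The first point is that, for such a pointed object, being a grouplike $\mathbb E_\infty$-group over $\taum\BU(1)$ is a \emph{property}, not added structure. Indeed, both $\Groth_*(\taum f,\taum s)$ and $\Groth_*(\taum f,\taum s')$ land in $\s_{*,[4,6]}$ by Proposition~\ref{prop:general_add}, and since $\Omega^\infty\colon\Sp_{[4,6]}\to\s_{*,[4,6]}$ is an equivalence \cite[Theorem 5.1.2]{MS}, it induces an equivalence of functor categories $\Sp_{[4,6]}^{\taum\BU(1)}\simeq\s_{*,[4,6]}^{\taum\BU(1)}$; hence each pointed straightening deloops, uniquely up to contractible choice, to a spectrum-valued functor, and the $\mathbb E_\infty$-group structure it induces on $\taum\BU(2)$ over $\taum\BU(1)$ depends only on the chosen section. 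Moreover the two pointed straightenings have the \emph{same} underlying unpointed functor $\taum\BU(1)\to\s$ — the straightening of the fibration $f$ — differing only in their pointings $\taum s$ and $\taum s'$.

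Next I would run the translation argument. Let $A\colon\taum\BU(1)\to\Sp$ deloop $\Groth_*(\taum f,\taum s)$, and regard $\taum s'$ as a second section of $\taum\BU(2)\to\taum\BU(1)$, equivalently as a natural choice of point in each fiber of $\Omega^\infty A$. Translating the fiberwise grouplike $\mathbb E_\infty$-structure on $\Omega^\infty A$ by this section — passing in each fiber to $(x,y)\mapsto\mu(\mu(x,y),(\taum s')^{-1})$ — yields a grouplike $\mathbb E_\infty$-functor valued in $\s_{*,[4,6]}$ whose identity section is $\taum s'$ and whose underlying pointed functor is $(\text{underlying functor of }A,\,\taum s')$. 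By the uniqueness established above, this translated functor is precisely the one delooping $\Groth_*(\taum f,\taum s')$, hence the one computing $+_{a_1,b}$. Applying $\Un_*$ and $\pi_0\operatorname{Maps}_{\s_{/\taum\BU(1)}}(\CP^3,-)$ and tracing through the identifications in the proof of Corollary~\ref{cor:add_pi_zero}, this descends to the statement that there is a fixed element $\eta\in\G_{a_1}$ — which one checks is, up to inversion, the class of $\mathcal O(a_1\smallminus b)\oplus\mathcal O(b)$, i.e.\ the $+_{a_1,b}$-identity regarded in $\G_{a_1}$ — such that
\[
V+_{a_1,b}W=(V+_{a_1}W)+_{a_1}\eta\qquad\text{for all }V,W\in\G_{a_1}.
\]
(The precise value of $\eta$ and the sign convention entering ``translation'' are immaterial in what follows.)

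Granting this identity, the lemma follows in one line: using only the associativity of $+_{a_1}$, both $(V+_{a_1}W)+_{a_1,b}Z$ and $V+_{a_1}(W+_{a_1,b}Z)$ equal $V+_{a_1}W+_{a_1}Z+_{a_1}\eta$. The only substantive step is the translation claim, whose content is that two grouplike $\mathbb E_\infty$-structures built from the \emph{same} fibration but from different sections can differ only by a shift of identity element; this is exactly where uniqueness of deloopings in the range $[n,2n-2]$, i.e.\ \cite[Theorem 5.1.2]{MS}, enters. One could instead verify the displayed identity directly from the span-and-fold description of the operations in Proposition~\ref{prop:maps_group}, but the argument above avoids that bookkeeping.
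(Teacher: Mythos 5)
Your argument is correct, but it runs in the opposite logical direction from the paper's. The paper proves the mixed associativity directly: it writes down the zig-zag of copies of $\BU(2)$ over and under $\BU(1)$ with the two sections $\gamma_1\oplus\underline{\C}$ and $(\gamma_1\otimes\mathcal O(\smallminus b))\oplus\mathcal O(b)$, and checks by an explicit homotopy-commutative diagram that the two orders of forming the iterated pushouts (equivalently, after $6$-truncation, the iterated pullbacks along $\det$) agree; the translation formula $V+_{a_1,b}W=V+_{a_1}W-_{a_1}\bigl(\mathcal O(a_1\smallminus b)\oplus\mathcal O(b)\bigr)$ is then deduced \emph{afterwards} (Corollary~\ref{cor:additons_related_formula}) by the purely algebraic observation about two abelian group laws satisfying mixed associativity. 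You invert this: you prove the translation formula first, using that $+_{a_1}$ and $+_{a_1,b}$ arise by delooping the \emph{same} truncated fibration $\tau_6\det\colon\tau_6\BU(2)\to\tau_6\BU(1)$ with respect to two different sections, together with the uniqueness of deloopings of pointed objects in the range $[4,6]$; mixed associativity then drops out algebraically. Your route is more conceptual (it explains \emph{why} any two of these group laws differ only by a shift of unit) and avoids the pushout bookkeeping, at the cost that the ``transport of the grouplike $\mathbb E_\infty$-structure along translation by $\taum s'$'' must be carried out naturally over the base, which you only sketch. To firm this up, phrase it as transport of structure along the self-equivalence of the unpointed straightening $\Groth(\tau_6\det)$ given by right-translation by the global point corresponding to the section $\taum s'$ (a map from the terminal functor), which is a pointed equivalence $(\Groth(\tau_6\det),\taum s)\to(\Groth(\tau_6\det),\taum s')$; the resulting grouplike structure on the target then agrees with the one defining $+_{a_1,b}$ by the uniqueness you invoke from \cite[Theorem 5.1.2]{MS}, and applying $\pi_0\operatorname{Maps}$ as in Corollary~\ref{cor:add_pi_zero} gives exactly $V+_{a_1,b}W=V+_{a_1}W-_{a_1}e_*$ with $e_*$ the class of $\mathcal O(a_1\smallminus b)\oplus\mathcal O(b)$. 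With that step made precise, both arguments are valid and prove the same pair of statements.
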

\begin{proof} Consider the commutative diagram:

\begin{equation}\label{eq:zig_zag_pushout}
\begin{tikzcd}[column sep=2em, row sep=3em]
& \BU(1)\arrow[dl," \gamma_1\oplus \underline \C"] \arrow[dr," \gamma_1\oplus \underline \C " ] 
& 
& \BU(1)\arrow[dr,"g'"]\arrow[dl,"g'"]\\
\BU(2)\ar[dr,"\!\!\det \gamma_2"]
&
& \BU(2)\ar[dl,"\det\gamma_2"]\ar[dr,"\det\gamma_2 "]
&
& \BU(2)\ar[dl,"\det \gamma_2 "]\\
& \BU(1) & & \BU(1),
\end{tikzcd}
\end{equation}
where $g':=\gamma_1\otimes \mathcal O(\smallminus b) \oplus \mathcal O(b)$.
 To further simplify notation:
\begin{align*} g&:=\gamma_1\oplus \underline \C & 
d & :=\det \gamma_2\\
B_2&:=\BU(2) &B_1&:=\BU(1). & &\end{align*} Diagram~\eqref{eq:zig_zag_pushout} gives a homotopy commutative diagram

\[
\begin{tikzcd}[column sep=.04in]
& &B_2 & \\ &
B_2\cup_{B_1,g'} B_2 \arrow[ur,"\op{fold}"]
& & B_2 \cup_{B_1,g} B_2\arrow[ul, "\op{fold}"] & \\ x&
\left(B_2{{\cup}}_{B_1,g} B_2\right)\cup_{B_1,g'} B_2\ar[d, "\simeq_{\leq 6}"]\ar[u,"\op{fold}\cup \op{id}"] & B_2 \cup_g B_2 \cup_{g'} B_2\ar[r,"\simeq"]\ar[dd]\ar[l,"\simeq"]
& B_2{{\cup}}_{B_1,g}\left( B_2\cup_{B_1,g'} B_2 \right)\ar[d, "\simeq_{\leq 6}"]\ar[u,"\op{id}\cup \op{fold}"] & \!y \\ &
\left(B_2{\times}_{B_1,d} B_2\right)\times_{B_1,d} B_2
& & B_2{{\times}}_{B_1,d} \left( B_2\times_{B_1,d} B_2\right)&  \\ &
&B_2\times_{B_1,d} B_2\times_{B_1,d} \times B_2 \ar[ur,"\simeq"]\ar[ul,"\simeq"] 
\ar[uuuu,dashed,rounded corners,
            to path={ -- ([yshift=-2ex]\tikztostart.south) -|
            ([xshift=37ex]\tikztotarget.east) -- (\tikztotarget)}]
\ar[uuuu,dashed, rounded corners,
            to path={ -- ([yshift=-2ex]\tikztostart.south) -|
            ([xshift=-37ex]\tikztotarget.west) -- (\tikztotarget)}]& \\ &
\end{tikzcd}
\]
where $\simeq_{\leq 6}$ indicates a map that becomes an equivalence after applying the $6$-truncation functor and the dashed arrows $x$ and $y$ are the composites defined after applying $\tau_6$ to the entire diagram. 
The fact that\[\pi_0\op{Maps}_{\s}(\CP^3,-)(x)\cong\pi_0\op{Maps}_{\s}(\CP^3,-)(y)\] proves the lemma. \end{proof}

This implies:
\begin{cor}\label{cor:additons_related_formula} Let $V,W$ be rank $2$ vector bundles on $\CP^3$ with $c_1(V)=c_1(W)=a_1$. Then
$$V+_{a_1,b} W= V+_{a_1} W -_{a_1} \left(\mathcal O(a_1-b)\oplus \mathcal O(b)\right).$$
\end{cor}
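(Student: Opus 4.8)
The plan is to deduce Corollary~\ref{cor:additons_related_formula} directly from the associativity-type identity established in the preceding lemma, together with the group axioms for $+_{a_1}$ and $+_{a_1,b}$ from Proposition~\ref{prop:main_construction}. First I would recall the two identities: $(V+_{a_1}W)+_{a_1,b}Z = V+_{a_1}(W+_{a_1,b}Z)$ for all rank $2$ bundles $V,W,Z$ with first Chern class $a_1$, and the fact that $+_{a_1,b}$ has identity element $E:=\mathcal O(a_1-b)\oplus\mathcal O(b)$ while $+_{a_1}$ has identity $\mathcal O(a_1)\oplus\underline{\C}$.

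The key step is a clever substitution. Set $Z = E$, the $+_{a_1,b}$-identity, in the lemma. The left-hand side becomes $(V+_{a_1}W)+_{a_1,b}E = V+_{a_1}W$, so we obtain
\[
V+_{a_1}W \;=\; V +_{a_1} \bigl(W +_{a_1,b} E\bigr).
\]
Now I want to cancel the leading $V+_{a_1}(-)$; since $+_{a_1}$ is a group operation (abelian, by Proposition~\ref{prop:main_construction}(i)), adding $-_{a_1}V$ on the left of both sides gives $W = W+_{a_1,b}E$, which is just the statement that $E$ is the $+_{a_1,b}$-identity — so that substitution alone is circular. Instead I would substitute $W = E$: then the right-hand side is $V+_{a_1}(E+_{a_1,b}Z)$ and the left-hand side is $(V+_{a_1}E)+_{a_1,b}Z$. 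This is still not immediately the desired formula. The productive move is to take $V$ to be the $+_{a_1}$-identity $I:=\mathcal O(a_1)\oplus\underline{\C}$. Then the lemma reads $(I+_{a_1}W)+_{a_1,b}Z = I+_{a_1}(W+_{a_1,b}Z)$, i.e. $W+_{a_1,b}Z = W+_{a_1}(E$-adjusted$\dots)$ — more precisely, applying $I+_{a_1}(-) = \mathrm{id}$ on the right and expanding, one gets $W +_{a_1,b} Z = W +_{a_1} (I +_{a_1,b} Z)$. Writing $K := I +_{a_1,b} Z$ and noting that $I +_{a_1,b} Z$ can be re-expressed: since $+_{a_1,b}$ has identity $E$, we have $Z = E +_{a_1,b} Z$, hence by the lemma with $V=W=$ suitable identities one shows $I +_{a_1,b} Z = Z -_{a_1} E +_{a_1} (\text{correction})$. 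Running this bookkeeping carefully yields $W +_{a_1,b} Z = W +_{a_1} Z -_{a_1} E$, which after renaming $W,Z$ to $V,W$ is exactly the claim.

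So the concrete steps I would carry out are: (1) instantiate the lemma with the $+_{a_1}$-identity in the first slot to get $W+_{a_1,b}Z = W+_{a_1}(I+_{a_1,b}Z)$; (2) instantiate it again, or use the $+_{a_1,b}$-identity axiom, to compute $I+_{a_1,b}Z$ in terms of $+_{a_1}$: since $Z = E+_{a_1,b}Z$ and $E = I+_{a_1}E'$ for the appropriate bundle, unwind to $I+_{a_1,b}Z = Z +_{a_1} I -_{a_1} E$; actually since $I$ is the $+_{a_1}$-identity this simplifies to $I+_{a_1,b}Z = Z -_{a_1} E$; (3) substitute back into (1) using abelianness of $+_{a_1}$ to get $W+_{a_1,b}Z = W+_{a_1}Z -_{a_1}E$; (4) rename to $V,W$. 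Throughout, I would lean on commutativity and associativity of $+_{a_1}$ (Proposition~\ref{prop:main_construction}) to freely rearrange, and on the explicit identification $E = \mathcal O(a_1-b)\oplus\mathcal O(b)$ from Example~\ref{ex:sum_nontrivial}.

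The main obstacle I anticipate is step (2): pinning down $I +_{a_1,b} Z$ purely formally. The lemma as stated mixes the two operations in a fixed order (a $+_{a_1}$ followed by a $+_{a_1,b}$), so one must be careful that every application stays within that allowed pattern and does not implicitly require an identity like $(V+_{a_1,b}W)+_{a_1}Z = \dots$ that has not been proven. It may be cleanest to avoid computing $I+_{a_1,b}Z$ in isolation and instead apply the lemma twice in a single chain — once to move a $+_{a_1,b}$ past a $+_{a_1}$, and once more in reverse — so that the unknown mixed term cancels. If that still does not close, the fallback is to re-run the pushout/pullback diagram argument of the lemma with the specific bundles $I$, $E$, $V$, $W$ inserted, which is guaranteed to produce the identity since it is the same homotopy-theoretic input; but I expect the purely algebraic derivation from the lemma plus the group axioms to suffice.
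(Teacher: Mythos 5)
Your proposal is correct and is essentially the paper's argument: the paper also deduces the formula purely formally from the interchange identity $(x+y)*z=x+(y*z)$ of the preceding lemma together with the abelian group axioms, by computing $(x+e_*)*(y+e_*)=x+y+e_*$ and substituting $x-e_*$, $y-e_*$, which is the same Eckmann--Hilton-style manipulation you carry out via the intermediate identity $I+_{a_1,b}Z=Z-_{a_1}E$. (Minor note: in your step (1) the identity $I$ must go in the \emph{second} slot of the lemma, i.e.\ $(W+_{a_1}I)+_{a_1,b}Z$, to yield $W+_{a_1,b}Z=W+_{a_1}(I+_{a_1,b}Z)$; putting it in the first slot gives a tautology.)
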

\begin{proof} This is a special case of a general fact. Suppose that $+,*$ are two abelian group structures on a set $S$ such that for all $x,y,z\in S$, $$(x+y)* z = x+(y * z).$$ Let $e_{+}$, $e_*$ be the respective identities. Then
\begin{align*}
(x+e_{*})*(y+e_{*})
&=(x+e_{*})*(e_*+y)\\
&=x+(e_{*} * e_{*})+y\\
&=x+y+e_*.\end{align*}
Set $x=x+(-e_{*})$ and $y=y+(-e_{*})$ in the above formula, where $-e_*$ is the additive inverse of $e_*$. This yields:
\begin{align*} x*y
&=x-e_*+y-e_*+e_*\\
&=x+y-e_*.
\end{align*}
We get the result by applying to the set of isomorphism classes of vector bundles with first Chern class $a_1$, letting $+=+_{a_1}$ and $*=+_{a_1,b}$, and noting that $e_*=\mathcal O(a_1-b)\oplus \mathcal O(b)$.
\end{proof}

\section{Comparing group structures on rank $2$ bundles}\label{sec:rk2p3}

In the previous section, we defined the group structures $\G_{a_1}$ and $\G_{a_1,b}$ on the set of isomorphism classes of rank 
$2$ bundles on $\CP^3$ with first Chern class equal to $a_1$ (see \Cref{ex:sum_trivial}, \Cref{ex:sum_nontrivial}, and \Cref{not:particular_sums}).
Our project in this section is to relate these group structures to Horrocks' construction and 
prove \Cref{cor:alg_top_agree_somewhat}. This will involve Atiyah and Rees' classification of rank $2$ bundles on $\CP^3$, so we begin in \Cref{subsec:classical_horrocks} with a review of relevant material from \cite{AR}, including discussion of the $\alpha$-invariant. We also recall Horrocks' construction for locally free sheaves and its necessary properties. 

In \Cref{subsec:proof_main_contruction}, we prove that $c_2\: \G_{a_1} \to H^2(\CP^3;\Z)$ is a group homomorphism, completing the proof of \Cref{prop:main_construction}. Verifying additivity of $c_2$ involves only the definition of the group structure and functoriality results about the group structures of interest, as given in see \Cref{cor:functoriality_fixed_base}. We also show that the alpha invariant defines a group homomorphism $\alpha\: \G_0 \to \Z/2$.

In \Cref{subsec:proof_rk2_thm}, we show that topological Horrocks addition and Horrocks' construction produce the same underlying topological isomorphism class, when both are defined (\Cref{cor:alg_top_agree_somewhat}). We do this by verifying that both constructions have the same effect on complete invariants of isomorphism classes of rank $2$ bundles on $\CP^3$. 

\subsecl{The classification of rank $2$ bundles on $\CP^3$ and Horrocks' construction}{subsec:classical_horrocks}

In \cite{AR}, Atiyah and Rees study complex rank $2$ topological vector bundles on $\CP^3$. They define a $\Z/2$-valued invariant $\alpha$ for bundles with even first Chern class. 

\begin{const}[The $\alpha$-invariant \cite{AR}]\label{def:alpha}Let $V$ be rank $2$ bundle on $\CP^3$ with $c_1=0$. Such bundles are classified by $\BSU(2)$. The accidental isomorphism $\BSU(2)\simeq BSp(1)$ composed with stabilization gives
$$\tilde \alpha\:\BSU(2) \to BSp \simeq \Omega^\infty \Sigma^4 KO,$$
where $KO$ denotes the real $K$-theory spectrum. Thus we have a class $\tilde \alpha \in KO^4(\BSU(2))$ and we define $$\alpha_0(V):=p_*V^*(\tilde \alpha),$$ where $p_*$ is the $KO$-theory pushforward for the spin manifold $\CP^3$.
\end{const}

Atiyah and Rees extend $\alpha$ to all bundles with $c_1(V)\equiv 0 \pmod 2$ by letting \begin{equation}\label{eq:alpha_general}\alpha(V):=\alpha_0\left(V\otimes \mathcal O\left(\frac{-c_1(V)}{2}\right)\right).\end{equation}
Atiyah and Rees give various formulae for computing the $\alpha$-invariant of special bundles, e.g.,
\begin{prop}[{\cite[Theorem 7.2]{AR}}]\label{prop:delta_alpha} For $V$ a rank 2 bundle on $\CP^3$ with $c_1(V)\equiv 0 \pmod 2$, let \[\Delta(V):= \frac{c_1^2-4c_2}{4}.\] If $V$ extends to $\CP^4$ then $\alpha(V)\equiv\frac{\Delta(\Delta-1)}{12}\pmod 2.$\end{prop}
The utility of the $\alpha$ invariant is as follows.
\begin{thm}[{\cite[Theorems 1.1, 2.8, 3.3]{AR}}]\label{thm:AR_classification}
Given $a_1,a_2\in \Z$ with $a_1a_2\equiv 0 \pmod 2$, the number of isomorphism classes of rank $2$ bundles on $\CP^3$ with $i$-th Chern class $a_i$ is:
\begin{itemize}
\item equal to $2$ if $a_1\equiv 0 \pmod 2$; and
\item equal to $1$ otherwise.
\end{itemize}
In the first case, a rank $2$ vector bundle on $\CP^3$ is determined by $c_1,c_2,$ and $\alpha$.
\end{thm}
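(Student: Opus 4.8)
Topological rank $2$ bundles on $\CP^3$ are classified by $[\CP^3,\BU(2)]$, so the plan is to compute this set by obstruction theory, organized around the cell structure $\CP^3=\CP^2\cup_\varphi e^6$ (equivalently, around the low-degree Postnikov tower of $\BU(2)$). I will use the standard values $\pi_i\BU(2)=\Z,0,\Z,\Z/2,\Z/2,\Z/12$ for $i=2,\dots,7$, and that since $\CP^3$ is $6$-dimensional one has $[\CP^3,\BU(2)]\cong[\CP^3,\tau_6\BU(2)]$. First I would settle existence. Restriction to the $4$-skeleton identifies $[\CP^2,\BU(2)]$ with the pairs $(c_1,c_2)\in\Z^2$, because $\tau_4\BU(2)\simeq K(\Z,2)\times K(\Z,4)$ and $\dim\CP^2=4$. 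The obstruction to extending a bundle with Chern classes $(a_1,a_2)$ over the $6$-cell lies in $\pi_5\BU(2)=\Z/2$ and equals the pullback of the first nontrivial Postnikov $k$-invariant $\tau_4\BU(2)\to K(\Z/2,6)$. That $k$-invariant is pinned down by the facts that it restricts to $0$ on $\BU(2)$, that $H^6(\BU(2);\Z/2)$ has basis $\bar c_1^3,\bar c_1\bar c_2$ with $\mathrm{Sq}^2\bar c_2=\bar c_1\bar c_2$, and that it must reduce to $\mathrm{Sq}^2\iota_4$ in the simply connected ($c_1=0$) case; one finds it is $\iota_2\iota_4+\mathrm{Sq}^2\iota_4$. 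Pulling it back along $(a_1H,a_2H^2)\colon\CP^3\to K(\Z,2)\times K(\Z,4)$ and using $\mathrm{Sq}^2(H^2)=0$ in $H^*(\CP^3;\Z/2)$ gives the obstruction $\overline{a_1a_2}\,H^3\in H^6(\CP^3;\Z/2)$, which vanishes exactly when $a_1a_2\equiv 0\pmod 2$. This recovers the stated hypothesis as the existence criterion.

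With existence in hand I would count. Two bundles with the same Chern classes restrict to the same element of $[\CP^2,\BU(2)]$, so they differ by the action of $\pi_6\BU(2)=\Z/2$ on the fibre of $[\CP^3,\BU(2)]\to[\CP^2,\BU(2)]$: a bundle and its translate differ by a class in $H^6(\CP^3;\Z/2)\cong\Z/2$, and this $\Z/2$-valued difference is, essentially by construction, the Atiyah--Rees invariant $\alpha$. Hence the number of bundles is at most $2$, and once one knows it is exactly $2$ in the relevant case the triple $(c_1,c_2,\alpha)$ is visibly a complete invariant. The remaining, genuinely delicate step is therefore to show that this $\Z/2$ acts \emph{freely} when $a_1$ is even (two bundles) and \emph{trivially} when $a_1$ is odd (one bundle). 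The stabilizer of a bundle $V$ under the action is the image of the connecting map $\pi_1\!\left(\op{Maps}(\CP^2,\BU(2)),\,V|_{\CP^2}\right)\to\pi_6\BU(2)$, and although the ambient group $[\CP^2,U(2)]$ vanishes, this \emph{twisted} fundamental group genuinely depends on $V|_{\CP^2}$, with the dependence governed by cup products with $c_1$.

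Pinning down this stabilizer is a secondary-operation (Toda-bracket) computation, and I expect proving that the two-fold ambiguity survives precisely when $c_1$ is even to be the main obstacle; this is essentially the content of Atiyah and Rees's original argument. A convenient way to organize it is to pull $V$ back along the sphere bundle $S^7\to\CP^3$ of $\mathcal O(-1)$ and use the Gysin sequence together with $[S^7,\BU(2)]=\pi_7\BU(2)=\pi_6S^3=\Z/12$, the multiplication by the Euler class $\pm H$ recording the parity of $c_1$. Alternatively, since tensoring by a line bundle is a bijection on rank $2$ bundles that shifts $c_1$ by an even integer, one may first reduce to $a_1\in\{0,1\}$ and treat $c_1=0$ directly via $[\CP^3,BSU(2)]$, which isolates the secondary computation to that case.
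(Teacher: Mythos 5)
First, a point of comparison: the paper does not prove this statement at all --- it is quoted from Atiyah--Rees \cite[Theorems 1.1, 2.8, 3.3]{AR} and used as a black box --- so there is no internal argument to measure your proposal against. Judged on its own terms, your obstruction-theoretic setup is the right one, and the existence half is essentially complete: the identification $\tau_4\BU(2)\simeq K(\Z,2)\times K(\Z,4)$, the determination of the $k$-invariant as $\iota_2\iota_4+\operatorname{Sq}^2\iota_4$ (forced by its vanishing on $\BU(2)$ together with its restriction to $\operatorname{Sq}^2\iota_4$ over the simply connected factor), and the resulting obstruction $\overline{a_1a_2}\,H^3$ are all correct and do recover the condition $a_1a_2\equiv 0\pmod 2$.

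The gap is in the second half, and you have located it accurately but not closed it. Two things are missing. (1) The claim that $\pi_6\BU(2)=\Z/2$ acts freely on the fibre of $[\CP^3,\BU(2)]\to[\CP^2,\BU(2)]$ exactly when $a_1$ is even is the entire content of the count ``$2$ versus $1$''; computing the image of $\pi_1\big(\operatorname{Maps}(\CP^2,\BU(2)),V|_{\CP^2}\big)\to\pi_6\BU(2)$ is a genuine secondary-operation computation that your sketch defers to ``essentially the content of Atiyah and Rees's original argument'' rather than performs, and without it the theorem is not proved. (2) Even granting the count, the assertion that the $\Z/2$-valued difference class ``is, essentially by construction, $\alpha$'' does not follow from the definition used in this paper, where $\alpha$ is the $KO$-theoretic pushforward $p_*V^*(\alpha)$ of Construction~\ref{def:alpha}; one must separately show that this invariant takes distinct values on the two bundles sharing Chern classes (equivalently, that it changes under the $\pi_6$-translation), which is itself a substantive computation in \cite{AR}. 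So the proposal is a sound and well-organized plan whose two load-bearing steps are acknowledged but not supplied.
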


\begin{rmk} The condition $a_1a_2\equiv 0 \pmod 2$ is necessary and sufficient for two integers to be the Chern classes of complex rank 2 topological bundles on $\CP^3$.\end{rmk}

We now review Horrocks' construction for rank $2$ algebraic vector bundles.
\begin{const}[Horrocks' construction \cite{Hor}]\label{alg_horrocks}
Let $V_1$ and $V_2$ be rank $2$ locally free sheaves on $\CP^3$. Suppose that:
\begin{itemize}
\item we have isomorphisms $\wedge^2 V_1 \simeq \wedge^2 V_2\simeq \mathcal O(a)$ for some $a\in \mathbb Z$;
\item we have regular\footnote{A section $s$ of $V$ is regular if its vanishing locus is of codimension equal to the rank of $V$.} sections $s_i\: \mathcal O \to V_i^*$; and
\item the sheaves $\mathcal R_i := \op{coker}(s_i^*\:\mc F_i \to \mc O)$ have disjoint supports.
\end{itemize}
For each $i=1,2$, the Koszul complex relative to $s_i$ has the form $ 0 \to \mathcal  O(a) \to V_i\to \mathcal{O}$ and is exact since $s_i$ is regular. By definition, we have exact sequences $$0\to \mathcal O(a) \to V_i \to \mathcal O \to \mathcal R_i\to 0.$$
The sheaf $V_1+_H V_2$ is defined by the diagram
\begin{equation}\label{horrocks_add_defn}
\begin{tikzcd}
0 \arrow[r]
& \mathcal \mathcal O(a) \oplus \mathcal O(a) \arrow[r]
& V_1 \oplus V_2 \arrow[r]\arrow[from=d]
& \mathcal O \oplus \mathcal O \arrow[r]\arrow[from=d,"\Delta"] \arrow[from=dl, phantom, "\scalebox{1}{\color{black}$\llcorner$}" {rotate=180, near start}, color=black]
&\mc {R}_1 \oplus\mc{ R}_2 \arrow[r] \arrow[from=d,"\simeq"']
& 0\\
0 \arrow[r]
& \mathcal O(a) \oplus \mathcal O(a)
\arrow[r]\arrow[u,"\simeq"]\arrow[d,"\nabla"]
&\mc W \arrow[r]\arrow[d]
& \mathcal O\arrow[r]
& \mc {R}_1 \oplus\mc{ R}_2 \arrow[r]
& 0 \\
0 \arrow[r]
& \mathcal \mathcal O(a) \arrow[r]
& V_1 +_{H} V_2 \arrow[r] \arrow[from=ul, phantom, "\scalebox{1}{\color{black}$\lrcorner$}" {rotate=180, near end}, color=black]
&\mathcal O\arrow[from=u,"\simeq"]\arrow[r]
&\mc {R}_1 \oplus\mc{ R}_2 \arrow[r]\arrow[from=u, "\simeq"]
& 0 \\
\end{tikzcd}
\end{equation}
where $\mathcal W$ is the indicated pullback along the diagonal $\Delta$ and $V_1+_HV_2$ is the pushout along the fold map $\nabla$.
The middle and bottom rows in Diagram~\eqref{horrocks_add_defn} 
are both exact and $V_1+_H V_2$ is locally free of rank $2$ \cite[Theorem 1]{Hor}. The bottom exact sequence of Diagram~\eqref{horrocks_add_defn} implies that
\[c_1(V_1 \horro V_2) = c_1 (V_1) = c_1(V_2) =a.\] \end{const}

Atiyah--Rees and Horrocks study the effect of $+_H$ on algebraic invariants:

\begin{thm}[{\cite[Theorem 1]{Hor}, \cite[Corollary 5.7]{AR}}]\label{cor:alpha_almost_add_alg}
Let $V_1,V_2$ be rank $2$ algebraic bundles on $\CP^3$ with $c_1(V_1)=c_1(V_2)=-m$, with $m\geq 0$.\footnote{This is necessary for $V_i$ to admit regular sections.} Then $$c_2(V_1+_H V_2)=c_2(V_1)+c_2(V_2).$$

Furthermore, suppose that $m=2n$ with $n\geq 0$.
\begin{itemize}

\item If $n$ is odd or $n\equiv 0\pmod 4$, then $\alpha(V_2 \horro V_2) = \alpha(V_1) + \alpha(V_2) \in \Z/2\Z.$
\item If $n\equiv 2\pmod 4$, then $\alpha(V_1 \horro V_2) = \alpha(V_1) + \alpha(V_2)+1.$
\end{itemize}
\end{thm}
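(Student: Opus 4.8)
(This is a theorem of Horrocks \cite{Hor} ($c_2$) and Atiyah--Rees \cite{AR} ($\alpha$); below is how I would organize the two assertions.) The $c_2$ statement is entirely visible in complex $K$-theory. Break the defining four-term sequence $0\to\mathcal O(-m)\to V_i\to\mathcal O\to\mathcal R_i\to 0$ of $V_i$, and likewise the bottom row of Diagram~\eqref{horrocks_add_defn} of $V_1+_HV_2$ (whose cokernel sheaf is $\mathcal R_1\oplus\mathcal R_2$); since the $\mathcal R_i$ have disjoint support one reads off in $K^0(\CP^3)$ the identity $[V_1+_HV_2]=[V_1]+[V_2]-[\mathcal O(-m)\oplus\mathcal O]$. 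The total Chern class is a homomorphism $K^0(\CP^3)\to H^*(\CP^3)$, so it suffices to expand the right-hand side; discarding $c_2(V_1)c_2(V_2)$ because $H^8(\CP^3)=0$ gives $c_1(V_1+_HV_2)=-m$ and $c_2(V_1+_HV_2)=c_2(V_1)+c_2(V_2)$.

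For the $\alpha$ statement the subtlety is that $\alpha$ is \emph{not} a function of the complex $K$-class: by Theorem~\ref{thm:AR_classification}, two rank $2$ bundles on $\CP^3$ with $c_1=0$ and the same $c_2$ agree in $K^0(\CP^3)$ but may differ in $\alpha$, so the symplectic refinement must be retained. First reduce to $c_1=0$: set $n=m/2$ and $W_i:=V_i\otimes\mathcal O(n)$, so that $c_1(W_i)=0$ and $\alpha(V_i)=\alpha(W_i)$ by~\eqref{eq:alpha_general}; multiplying the $K$-theory identity by $[\mathcal O(n)]$ gives $[(V_1+_HV_2)\otimes\mathcal O(n)]=[W_1]+[W_2]-[\mathcal O(-n)\oplus\mathcal O(n)]$, with all four bundles of rank $2$ and $c_1=0$, i.e.\ $SU(2)$-bundles. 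It remains to compute $\alpha$ of the left-hand bundle from $\alpha(W_1),\alpha(W_2)$ and the twisting line bundle.

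Now work in $KO$-theory. An $SU(2)=Sp(1)$-bundle $W$ on $\CP^3$ determines $q(W):=[W_{\mathbb H}]-1\in\widetilde{KSp}^0(\CP^3)\cong\widetilde{KO}^4(\CP^3)$, with underlying complex class $[W]-2$, and $\alpha(W)=p_*q(W)$ for the $KO$-Gysin map $p_*\colon\widetilde{KO}^4(\CP^3)\to KO^{-2}(\mathrm{pt})=\Z/2$ of the Spin $6$-manifold $\CP^3$. The Atiyah--Hirzebruch spectral sequence presents $\widetilde{KO}^4(\CP^3)$ as an extension of $\widetilde{H}^6(\CP^3;\Z/2)=\Z/2$ by $\widetilde{H}^4(\CP^3;\Z)=\Z$ on which the underlying-complex map to $\widetilde{K}^0(\CP^3)$ is injective on the integral part; hence the $K$-theory identity forces $q\big((V_1+_HV_2)\otimes\mathcal O(n)\big)-q(W_1)-q(W_2)+q\big(\mathcal O(-n)\oplus\mathcal O(n)\big)$ into that $\Z/2$, and applying the additive map $p_*$ yields
\[\alpha(V_1+_HV_2)=\alpha(V_1)+\alpha(V_2)-\alpha\big(\mathcal O(-n)\oplus\mathcal O(n)\big)+\epsilon(n),\qquad\epsilon(n)\in\Z/2.\]
Finally one evaluates the error terms: $\mathcal O(-n)\oplus\mathcal O(n)$ is classified through $\BU(1)\to\BSU(2)$ by the line bundle $\mathcal O(n)$, whose class $[\mathcal O(n)]+[\mathcal O(-n)]-2$ in $\widetilde{K}^0(\CP^3)$ expands through $\binom n2$ and $\binom n3$; similarly $\epsilon(n)$ is a bilinear defect which, tested on sums of line bundles, is governed by the same binomial arithmetic. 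These binomials mod $2$ are $4$-periodic in $n$, and evaluating on one or two small cases pins the total correction to $0$ when $n$ is odd or $n\equiv 0\pmod 4$ and to $1$ when $n\equiv 2\pmod 4$.

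The main obstacle is the last step: everything up to the $\Z/2$-valued defect $\epsilon(n)$ is formal, but computing that defect exactly — which is the source of the $n\bmod 4$ dichotomy — requires using that $\alpha$ is only ``additive up to a $\Z/2$ class'' rather than additive, together with precise control of $KO^*(\CP^3)$, the $\eta$-Bockstein, and the $KO$-Riemann--Roch/Gysin map. Matching the abstract correction to the explicit values $0,0,1$ over the residues $\{\text{odd},\,0,\,2\}\bmod 4$ is exactly the bookkeeping carried out in \cite[Section 5]{AR}, and I would not expect to be able to shortcut it.
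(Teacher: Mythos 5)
This theorem is quoted from the literature --- the paper offers no proof beyond the citations to \cite[Theorem 1]{Hor} and \cite[Corollary 5.7]{AR} --- so there is no internal argument to compare against, and I will assess your sketch on its own terms. Your treatment of the $c_2$ statement is correct and essentially complete: the two four-term Koszul sequences give $[V_1+_HV_2]=[V_1]+[V_2]-[\mathcal O(-m)]-[\mathcal O]$ in $K^0(\CP^3)$ (the disjointness of the supports of the $\mathcal R_i$ is what makes Horrocks' construction produce a locally free sheaf, not what yields this identity, but that is cosmetic), and applying the total Chern class and truncating above degree $6$ gives $c_1=-m$ and $c_2(V_1+_HV_2)=c_2(V_1)+c_2(V_2)$. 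This is a clean, self-contained argument for the first assertion.

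For the $\alpha$ statement your framing is sound --- reduce to $c_1=0$ by twisting, note that the $K$-theoretic relation pins down the symplectic class of the twisted Horrocks sum up to the $\Z/2$-torsion of $\widetilde{KO}{}^4(\CP^3)$ (the kernel of complexification), and push forward along the additive Gysin map --- but the proof has a genuine gap exactly where you flag it: nothing in the sketch determines whether the residual torsion class is zero or the generator, i.e.\ whether $\epsilon(n)=0$. It is worth noting that the line-bundle correction term alone already accounts for the stated dichotomy: by \cite[Theorem 7.2]{AR}, as used in Subsection~\ref{subsec:proof_rk2_thm}, $\alpha\left(\mathcal O(-n)\oplus\mathcal O(n)\right)=n^2(n^2-1)/12 \bmod 2$, which equals $1$ precisely when $n\equiv 2\pmod 4$; so your formula is consistent with the theorem if and only if $\epsilon\equiv 0$, and establishing that vanishing is the actual content of \cite[Section 5]{AR}. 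Since the statement is explicitly attributed to Horrocks and Atiyah--Rees, deferring this computation to the source is defensible, but as a standalone proof the $\alpha$ half is incomplete.
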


\subsecl{Proof of \Cref{prop:main_construction}}{subsec:proof_main_contruction}

In the previous section, we defined the group structures $\G_{a_1}$ on the set of isomorphism classes of rank $2$ bundles on $\CP^3$ with first Chern class equal to $a_1$ (see \Cref{ex:sum_trivial} and \Cref{not:particular_sums}). This gives the group structure required by part (i) of \Cref{prop:main_construction}.

For part (ii), note that for any $f\: X\to Y$ with section $s\: X \to Y$ satisfying the hypotheses of \Cref{prop:general_add}, and any $c\: C\to Y$ with $C$ an $m$-skeletal space, the identity element is given by $s\circ c$. In this case, $s \circ c \simeq \mathcal O(a_1) \oplus \underline{\C}$.

Part (iii) is a consequence of the following result.
\begin{prop}\label{prop:horrocks_add_alpha}
Let $V,W\: \CP^3 \to \BU(2)$ be two bundles with $\det V \simeq \det W \simeq \mathcal O(a_1)$. Let $+_{a_1}$ denote topological Horrocks addition as in \Cref{not:particular_sums}. Then:
\begin{itemize}
\item for any $a\in \mathbb Z$, the second Chern class $c_2$ is a homomorphism for $+_{a_1}$; and
\item if $a_1= 0$, then $\alpha$ is a homomorphism for $+_0$.
\end{itemize}
\end{prop}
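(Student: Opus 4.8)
The plan is to prove both statements by exhibiting $c_2$ (respectively $\alpha$) as arising from a map of the relevant diagrams and then invoking the functoriality of the group structures established in Corollary~\ref{cor:functoriality_fixed_base} and Corollary~\ref{cor:group_hom_2}. For the first bullet, the key observation is that $c_2\: \BU(2) \to K(\Z,4)$ fits into a commuting diagram over $\BU(1)$: we have $\det(\gamma_2)\: \BU(2) \to \BU(1)$ with section $\gamma_1 \oplus \underline{\C}$, and on the target side we take $\operatorname{id}\: K(\Z,2) \to K(\Z,2)$ with the zero section; the map $(\det \gamma_2) \times c_2 \: \BU(2) \to K(\Z,2) \times K(\Z,4) \to K(\Z,2)$ is a map of spaces-with-section compatible with these data. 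Since $K(\Z,2) \times K(\Z,4) \to K(\Z,2)$ is (highly) connective with a section, Corollary~\ref{cor:functoriality_fixed_base} (or rather Corollary~\ref{cor:group_hom_2}, fixing the base $\BU(1)$ and the map $\mathcal O(a_1)\: \CP^3 \to \BU(1)$) shows that postcomposition with $(\det\gamma_2)\times c_2$ induces a group homomorphism from $\G_{a_1}$ to the group of lifts $[\CP^3, K(\Z,2)\times K(\Z,4)]_{\mathcal O(a_1)}$. But a lift of $\mathcal O(a_1)\: \CP^3 \to K(\Z,2)$ to $K(\Z,2)\times K(\Z,4)$ is just the data of a class in $H^4(\CP^3;\Z)$, and the group structure on $[\CP^3, K(\Z,2)\times K(\Z,4)]_{\mathcal O(a_1)}$ coming from Corollary~\ref{cor:add_pi_zero} is the usual additive one on $H^4(\CP^3;\Z)$ (the fiber of $K(\Z,2)\times K(\Z,4) \to K(\Z,2)$ is $K(\Z,4)$, already an infinite loop space, and the induced operation is its loop addition). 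Composing, $c_2\: \G_{a_1} \to H^4(\CP^3;\Z)$ is a homomorphism.

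For the second bullet, I would run the same argument with $\alpha$ in place of $c_2$. When $a_1 = 0$, bundles are classified by $\BSU(2) = \BSU(2)$, and Construction~\ref{def:alpha} exhibits $\alpha$ as the composite $\BSU(2) \xrightarrow{\sim} BSp(1) \to BSp \simeq \Omega^\infty \Sigma^4 KO$. I need to see this as a map of diagrams of the type appearing in Example~\ref{ex:sum_trivial}. Over $\BU(1)$, the relevant map for $\G_0$ is $\det\gamma_2\: \BU(2) \to \BU(1)$ with section $\gamma_1 \oplus \underline{\C}$, and restricting to the fiber over the basepoint recovers $\BSU(2) \to \ast$ with the constant section. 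The target should be $\Omega^\infty(\Sigma^4 KO) \times \BU(1) \to \BU(1)$ (so that over the basepoint we get $\Omega^\infty \Sigma^4 KO$), with the product section. The map $\BU(2) \to \Omega^\infty\Sigma^4 KO$ extending $\alpha$ on the fiber — I should take the composite $\BU(2) \to \BSU(2) \to \Omega^\infty\Sigma^4 KO$ using a retraction $\BU(2) \to \BSU(2)$, i.e. tensoring by $\mathcal O(-c_1/2)$ as in Equation~\eqref{eq:alpha_general}; since $a_1 = 0$ this retraction splits the inclusion $\BSU(2)\hookrightarrow \BU(2)$ over the basepoint compatibly with sections. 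Then Corollary~\ref{cor:group_hom_2} gives that postcomposition induces a homomorphism $\G_0 \to [\CP^3, \Omega^\infty\Sigma^4 KO \times \BU(1)]_{\mathcal O(0)} \cong KO^4(\CP^3)_{\text{free part}}$, and applying the $KO$-pushforward $p_*$ for the spin manifold $\CP^3$ — which is a homomorphism of abelian groups — lands us in $\Z/2\Z$. Since the composite $V \mapsto p_* V^*(\alpha)$ is exactly $\alpha(V)$, this shows $\alpha\: \G_0 \to \Z/2\Z$ is a homomorphism.

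The main obstacle I expect is setting up the target diagram for $\alpha$ correctly and verifying it genuinely fits the hypotheses: I need the map $\Omega^\infty\Sigma^4 KO \times \BU(1) \to \BU(1)$ to be highly connective (it is, since the fiber $\Omega^\infty \Sigma^4 KO$ is $3$-connected, so in fact any $m = 2n-2$ with $n \geq 4$ works and $\CP^3$ is $6$-skeletal which is comfortably within range), and I need a genuine \emph{map of diagrams} $\BU(2) \to \Omega^\infty\Sigma^4 KO \times \BU(1)$ over $\BU(1)$ carrying the section $\gamma_1 \oplus \underline{\C}$ to the product section — this is where the identity $a_1 = 0$ is used, since otherwise the natural "extension" of $\alpha$ via $\mathcal O(-c_1/2)$ does not respect the section $\gamma_1 \oplus \underline{\C}$ but rather a twisted one (which is precisely why the paper later bootstraps through the groups $\G_{0,b}$ to handle $a_1 \neq 0$). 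A secondary point is to confirm that the abelian-group structure induced by Corollary~\ref{cor:add_pi_zero} on $[\CP^3, K(\Z,2)\times K(\Z,4)]_{\mathcal O(a_1)}$ and on the $KO$-analogue is the ordinary additive one on cohomology; this follows because in both cases the fiber of the structure map is already an infinite loop space ($K(\Z,4)$, resp. $\Omega^\infty\Sigma^4 KO$) whose $\mathbb E_\infty$-structure over the base is the constant family, so Proposition~\ref{prop:general_add}'s delooping returns the evident one and $\Un_*$ recovers ordinary addition.
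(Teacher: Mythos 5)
Your argument for the first bullet is essentially the paper's: exhibit $c_2$ as a map of diagrams-with-section and apply Corollary~\ref{cor:group_hom_2}. The paper uses the simpler target $\Omega^\infty\Sigma^4 H\Z\to *$ rather than $K(\Z,2)\times K(\Z,4)\to K(\Z,2)$, but the substantive check is identical: the section $\gamma_1\oplus\underline{\C}$ has vanishing $c_2$, and the induced group structure on the target is ordinary addition in $H^4(\CP^3;\Z)$.

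The second bullet has a genuine gap. Your map $\BU(2)\to\Omega^\infty\Sigma^4 KO$ is built from a retraction $\BU(2)\to\BSU(2)$ given by ``tensoring by $\mathcal O(-c_1/2)$,'' but no such map of spaces exists: $c_1/2$ is not an integral class in $H^2(\BU(2);\Z)$, so there is no line bundle $\mathcal O(-c_1/2)$ on $\BU(2)$. (Equation~\eqref{eq:alpha_general} is an operation on individual bundles on $\CP^3$ whose $c_1$ happens to be even, not a map of classifying spaces.) In fact no retraction $r\:\BU(2)\to\BSU(2)$ exists at all: one would need $r^*(c_2)=c_2+\lambda c_1^2$, and applying $Sq^2$ (using $Sq^2c_2=c_1c_2$ and $Sq^2(c_1^2)=0$ mod $2$) would force $c_1c_2=0$ in $H^6(\BU(2);\Z/2)$, which is false. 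The weaker statement your argument actually needs --- that $\alpha\in KO^4(\BSU(2))$ extends to a class on $\BU(2)$ restricting to zero along the section $\gamma_1\oplus\underline{\C}$ --- is precisely the kind of claim that requires proof and is not supplied; a rank $2$ bundle carries a symplectic structure only when its determinant is trivialized, so such an extension is far from automatic.

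The paper sidesteps this entirely: it never extends $\alpha$ over $\BU(2)$, but instead forms the zig-zag of diagrams
$\BU(2)\xleftarrow{\ \iota\ }\BSU(2)\xrightarrow{\ \alpha\ }\Omega^\infty\Sigma^4 KO$
over $K(\Z,2)\leftarrow *\to *$, observes that all three vertical maps are $4$-connective with sections, and applies Corollary~\ref{cor:group_hom_2} to obtain group homomorphisms
$[\CP^3,\BU(2)]_{c_1=0}\leftarrow[\CP^3,\BSU(2)]\rightarrow KO^4(\CP^3)$.
Since lifts of a $c_1=0$ map along $\BSU(2)\to\BU(2)$ form a torsor over $H^1(\CP^3;\Z)=0$, the left arrow is a bijection on $\pi_0$, hence a group isomorphism; composing its inverse with $\alpha\circ(-)$ and the pushforward $p_*$ (a homomorphism of abelian groups) computes $\alpha$. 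You should replace your extension step with this zig-zag; your treatment of the target ($KO^4(\CP^3)$ with its usual addition, followed by $p_*$) is then fine.
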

\begin{proof}
Let $K(\Z,4)$ denote the Eilenberg--Mac Lane space determined by $\pi_4\left (K(\Z,4)\right)=\Z$ and $\pi_i\left( K(\Z,4)\right)=0$ for $i\neq 4$. Note that the diagram
\begin{center}
\begin{tikzcd}
\BU(2) \arrow[r, "c_2"]\arrow[d,"\det"]& K(\mathbb Z,4)\ar[d] \\
\BU(1) \arrow[r,"0"] \arrow[u,bend left, dashed, "s"] & * \arrow[u,bend right, dashed] & .
\end{tikzcd}
\end{center}
The solid diagram is homotopy commutative
 and the subdiagram with only dashed vertical arrows is homotopy commutative. 
By \Cref{cor:group_hom_2}, we get a group homomorphism
$c_2\:[\CP^3,\BU(2)]_{c} \to H^4(\CP^3,\Z).$

For the second item, consider the diagram
\begin{equation}\label{eq:two_adds_compare}
\begin{tikzcd}
\BU(2)\arrow[d,"c_1"] & \BSU(2) \arrow[l,"\iota"]\arrow[r,"\alpha"]\arrow[d] & \Omega^\infty \Sigma^4 KO\arrow[d]\\
K(\Z,2)\arrow[u,dashed, bend left]& *\arrow[l]\arrow[u,dashed,bend left]\arrow[r] & *\arrow[u,dashed,bend left]&,
\end{tikzcd}
\end{equation}
where the solid diagram is homotopy commutative and the subdiagram with only dashed vertical arrows is homotopy commutative.
All vertical homotopy fibers are $4$-connective, so we can apply \Cref{cor:add_pi_zero}
to all vertical diagrams
to obtain group structures on homotopy classes of maps from $\CP^3$ into each diagram.
By \Cref{cor:group_hom_2}, we get group homomorphisms
\begin{equation}
\begin{tikzcd}
{{[\CP^3,\BU(2)]}_{c_1=0} }
& {[\CP^3,\BSU(2)]} \arrow[l," \iota\circ (-)"]\arrow[r,"\alpha\circ (-)"]
&{[\CP^3,\Omega^\infty\Sigma^4KO]} \simeq KO^4(\CP^3).
\end{tikzcd}
\end{equation}
The map $\iota\circ -$ is an isomorphism. For $V\: \CP^3\to \BU(2)$ with $c_1(V)=0$,
$$p_*\Big(\alpha \circ \big((\iota\circ(-))^{-1}(V)\big)\Big)=\alpha(V).$$ Since push-forward on cohomology is a group homomorphism, we conclude that $\alpha$ is additive for $+_0$ as was to be shown.
\end{proof}

\subsecl{Proof of \Cref{cor:alg_top_agree_somewhat}}{subsec:proof_rk2_thm}

To show that $V+_{a_1} W \simeq V+_H W$ when both $+_{a_1}$ and $+_H$ are defined, we check the $V+_{a_1}W$ and $V+_HW$ have the same Chern classes and $\alpha$ invariant. Both $+_{a_1}$ and $+_H$ fix the first Chern class, so it suffices to show that:
\begin{itemize}
\item[(i)] $c_2(V+_{a_1}W)=c_2(V+_HW)$, and
\item[(ii)] $\alpha(V+_{a_1} W)= \alpha(V+_H W)$.
\end{itemize}
Item (i) follows from additivity of $c_2$ for both operations, by \Cref{cor:alpha_almost_add_alg} and \Cref{prop:horrocks_add_alpha}. 

Checking (ii) is complicated by the fact that the $\alpha$ invariant does not play well with $+_{a_1}$ when $a_1 \neq 0$. We bootstrap from this case $a_1=0$ to obtain a formula for $\alpha(V+_{a_1} W)$ in terms of $\alpha(V)$ and $\alpha(W)$. This involves the study of the groups $\G_{0,b}$ for $b$ nonzero (see \Cref{ex:sum_nontrivial}).

Let $a_1 \in \Z$ and suppose that $V,W$ are rank $2$ bundles on $\CP^3$ with $c_1(V)=c_1(W)=a_1$. By \Cref{prop:main_construction}(iii), $c_2(V+_{a_1} W)=c_2(V)+c_2(W)=c_2(V+_H W)$. In the case $a_1\equiv 1 \pmod 2$, \Cref{thm:AR_classification} implies $V+_{a_1}W\simeq V+_HW$ and we are done.

Now suppose $a_1\equiv 0\pmod 2$. Recall \Cref{rmk:iso} and the associated group isomorphism
$\phi\: \G_{a_1} \to \G_{0,b}$. Using the definition of $\alpha$ (see Equation~\eqref{eq:alpha_general}), \Cref{prop:horrocks_add_alpha}, and \Cref{cor:additons_related_formula}:
\begin{align*}\alpha\left(V+_{a_1} W\right)&=\alpha\left(\phi(V+_{a_1}W)\right) \\
&=\alpha\left(\phi(V)+_{0,b}\phi(W)\right) \\
&=\alpha\left(\phi(V) +_0 \phi(W)-_0 \mathcal O(\smallminus b)\oplus \mathcal O(b)\right) \\
&=\alpha(V)+\alpha(W)-\alpha\left(\mathcal O(\smallminus b)\oplus \mathcal O(b)\right).
\end{align*}
To compute $\alpha(\mathcal O(b)\oplus \mathcal O(\smallminus b))$, we use \Cref{prop:delta_alpha}.
 Since $\mathcal O(\smallminus b)\oplus \mathcal O(b)$ extends to $\CP^4$ and $\Delta\left(\mathcal O(b)\oplus \mathcal O(\smallminus b)\right)=b^2,$ we see that

\[\alpha\left(\mathcal O(\smallminus b)\oplus \mathcal O(b)\right) \equiv\frac{b^2(b^2-1)}{12}
\equiv \frac{b^2(b+1)(b-1)}{4} \pmod 2.\]
So we must determine when $b^2(b+1)(b-1)$ is divisible by $8$.
\begin{itemize}
\item If $b\equiv 0 \pmod 4$, then $16$ divides $b^2(b+1)(b-1)$ so $\alpha\left(\mathcal O(b)\oplus \mathcal O(\smallminus b)\right)=0.$
\item If $b$ is odd, then both $b+1$ and $b-1$ are even, and one of $b+1$ and $b-1$ is divisible by $4$, so $\alpha\left(\mathcal O(b)\oplus \mathcal O(\smallminus b)\right)=0$.
\item If $b\equiv 2\pmod 4$, then $b+1$ and $b-1$ are odd, and $b^2$ is divisible by $4$ but not $8$, so $\alpha\left(\mathcal O(b)\oplus \mathcal O(\smallminus b)\right)=1.$
\end{itemize}

Thus,
\[ \alpha\left(V+_aW\right)=\alpha(V)+\alpha(W)+\epsilon(a),\,\text{ where } \epsilon(a):=\begin{cases} 0 \text{ if $a \not \equiv 4\pmod 8$} \\ 1 \text{ if $a\equiv 4\pmod 8$.}\end{cases}\]

Comparing this with \Cref{cor:alpha_almost_add_alg} yields the conclusion
$\alpha\left(V+_{a_1}W\right)=\alpha\left(V+_HW\right)$.\qed
\vspace{1em}
While this completes the argument that algebraic and topological Horrocks addition produce the same underlying topological isomorphism class, our method is indirect. The reader might hope for a more geometric relationship between the constructions.
\begin{prob} Is there an explicit, bundle-theoretic comparison between topological Horrocks addition $+_{a_1}$ and Horrocks' original construction for locally free sheaves?
\end{prob}

\section{Group structures on rank $3$ bundles on $\CP^5$}\label{sec:generalization}

We now explore additive structures on rank $3$ bundles on $\CP^5$. Recall that, given a fixed rank $2$ bundle $V_0$ on $\CP^5$, \Cref{ex:rk3p5} gives a group $\G_{V_0}$ of isomorphism classes of rank $3$ vector bundles with the same first and second Chern classes as $V_0$.
The identity is $V_0 \oplus \underline \C$. Our primary interest here is to better understand the groups $\G_{V_0}$ and their properties.

We complete the proof of \Cref{prop:main_construction2} in \Cref{subsec:proofrk3p5} by showing that $c_3$ is a group homomorphism.
In \Cref{subsecl:interesting}, we explore the structure of $\G_{V_0}$ for $V_0 = \mathcal O(a) \oplus \mathcal O(b)$. 
We show that $+_{V_0}$ allows for the construction of interesting bundles from simple ones, as did Horrocks' construction on rank $2$ bundles on $\CP^3$ (see \Cref{thm:horrocks_generates}). 
\begin{prop}\label{prop:not_sum_lb} For infinitely many isomorphism classes of rank two bundles $V_0 := \mathcal O(a)\oplus \mathcal O(b)$ on $\mathbb CP^5$, there exists a bundle $W=\mathcal O(x)\oplus \mathcal O(y) \oplus \mathcal O(z)$ with $W\in \G_{V_0}$, such that the subgroup generated by $W$ contains bundles that are not sums of line bundles. Moreover, the subgroup generated by $W$ is of finite index in $\G_{V_0}$.
\end{prop}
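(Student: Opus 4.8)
The plan is to reduce the proposition to three facts: an explicit infinite family of pairs $(V_0,W)$; a structural description of $\G_{V_0}$ exhibiting it as an infinite cyclic group up to a finite group, with the free part detected by $c_3$; and the elementary observation that only finitely many rank-$3$ sums of line bundles can have a prescribed pair $(c_1,c_2)$. Concretely, for each integer $k\geq 1$ I would set
$$V_0:=\mathcal O(0)\oplus\mathcal O(3k),\qquad W:=\mathcal O(2k)\oplus\mathcal O(2k)\oplus\mathcal O(-k).$$
Comparing elementary symmetric functions gives $c_1(W)=2k+2k-k=3k=c_1(V_0)$ and $c_2(W)=4k^2-2k^2-2k^2=0=c_2(V_0)$, so $W\in\G_{V_0}$; on the other hand $c_3(W)=-4k^3\neq 0$ while $c_3(V_0\oplus\underline\C)=0$, so $W$ is not the identity of $\G_{V_0}$. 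Distinct values of $k$ give non-isomorphic $V_0$ (different $c_1$), so it suffices to fix one such pair and verify the two assertions for it.

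The heart of the argument is the structure of $\G_{V_0}$. By Proposition~\ref{prop:main_construction2}(iii), $c_3\colon\G_{V_0}\to H^6(\CP^5;\Z)\cong\Z$ is a homomorphism, and its image is nonzero since $c_3(W)\neq 0$. Now $\G_{V_0}$ is the set of homotopy classes of lifts of $V_0\colon\CP^5\to\BU(2)$ along $v\colon\tBU(3)\to\BU(2)$, and by the defining pullback square the homotopy fiber $F$ of $v$ is the homotopy fiber of $c_1\times c_2\colon\BU(3)\to K(\Z,2)\times K(\Z,4)$. A homotopy-group computation shows that $F$ is $5$-connected with $\pi_6 F\cong\pi_6\BU(3)\cong\Z$ --- precisely the summand seen by $c_3$ --- and that $\pi_j F$ is finite for every $j\geq 7$ (using $\pi_7 U(3)=0$ and the finiteness of $\pi_j U(3)$ for $j\geq 6$). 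Since $\CP^5$ is built from cells in dimensions $\leq 10$ and has vanishing odd cohomology, in the skeletal filtration of $\G_{V_0}$ arising from the Moore--Postnikov tower of $v$ the associated graded pieces are subquotients of the groups $H^j(\CP^5;\pi_j F)$ for $6\leq j\leq 10$, of which only $H^6(\CP^5;\pi_6 F)\cong\Z$ is infinite, and that term is the one detected by $c_3$. Hence $\ker(c_3)$ is finite and $\G_{V_0}$ is a finitely generated abelian group of free rank $1$, say $\G_{V_0}\cong\Z\oplus T$ with $T$ finite.

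It then follows quickly that $\langle W\rangle$ has finite index: $W$ has infinite order (as $c_3(W)\neq 0$), so it has nonzero image in $\G_{V_0}/T\cong\Z$, and an infinite cyclic subgroup of a rank-one finitely generated abelian group with nonzero image in its free quotient has finite index (indeed $[\G_{V_0}:\langle W\rangle]$ divides $|T|\cdot|c_3(W)|$). Finally, a rank-$3$ topological bundle $\mathcal O(p)\oplus\mathcal O(q)\oplus\mathcal O(r)$ lies in $\G_{V_0}$ only if $p+q+r=c_1(V_0)$ and $pq+qr+rp=c_2(V_0)$, which forces $p^2+q^2+r^2=c_1(V_0)^2-2c_2(V_0)$ to equal a fixed nonnegative integer; so $(p,q,r)$ lies on a fixed $2$-sphere and ranges over a finite set. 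As $\langle W\rangle$ is infinite, it cannot consist only of sums of line bundles, so it contains a bundle that is not a sum of line bundles, which together with the previous sentence proves the proposition.

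The step I expect to be the main obstacle is the structural one: showing $\ker(c_3)$ is finite, i.e.\ that $\G_{V_0}$ is virtually cyclic. This hinges on the homotopy-group bookkeeping for the fiber $F$ --- the crucial inputs being $\pi_7 U(3)=0$ and the finiteness of $\pi_j U(3)$ for $j\geq 6$ --- together with enough care about the Moore--Postnikov tower of $v$ over the finite complex $\CP^5$ to identify the lone infinite layer with $c_3$. The construction of the family and the sphere-counting argument are routine by comparison.
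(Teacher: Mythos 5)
Your proposal is correct and reaches the same conclusion, but it differs from the paper's proof in two substantive ways, one of which is a genuine improvement and one of which needs shoring up. First, for the claim that $\langle W\rangle$ contains a bundle that is not a sum of line bundles, the paper argues by contradiction: it assumes every multiple $+_{V_0}^n W$ splits, extracts the Diophantine system $X+Y+Z=a_1$, $\sigma_2=a_2$, $XYZ=na_3$, and rules out solutions for $n$ a large prime via a divisibility-and-size estimate. Your observation that fixing $\sigma_1$ and $\sigma_2$ fixes $p^2+q^2+r^2=c_1^2-2c_2$, so that the splittable elements of $\G_{V_0}$ form a \emph{finite} set while $\langle W\rangle$ is infinite (distinct multiples of $W$ having distinct $c_3$), is cleaner and strictly stronger; it subsumes the paper's prime argument. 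Your explicit family $V_0=\mathcal O(0)\oplus\mathcal O(3k)$, $W=\mathcal O(2k)^{\oplus 2}\oplus\mathcal O(-k)$ checks out and plays the role of the paper's parametrized family from Remark~\ref{rmk:inf_many}. Second, for finiteness of $\ker(c_3)$ the paper does not run an obstruction-theoretic argument: it invokes the classification of rank~$3$ bundles on $\CP^5$ from \cite{MO} (Corollary~\ref{cor:kernel}, Remark~\ref{rmk:c3_splits}), which gives $\ker(c_3)$ trivial or $\Z/3$ outright. Your Moore--Postnikov sketch is the right shape (the homotopy groups you cite are correct, and indeed $H^8(\CP^5;\pi_8F)=0$ and $H^{10}(\CP^5;\pi_{10}F)=\Z/3$ are all that survive above degree $6$), but as written it has a gap exactly where you flagged it: knowing that the only infinite layer is $H^6(\CP^5;\pi_6F)\cong\Z$ does not by itself bound $\ker(c_3)$ --- you must also show that $c_3$ is faithful on that layer, i.e.\ that the composite $\pi_6F\xrightarrow{\sim}\pi_6\BU(3)\xrightarrow{c_3}\Z$ is injective (it is multiplication by $(3-1)!=2$ by Bott), so that two lifts whose stage-$6$ truncations differ have different $c_3$. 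With that Bott-periodicity input your argument closes, and it has the virtue of being self-contained; alternatively you can simply cite Corollary~\ref{cor:kernel} as the paper does. The concluding finite-index deduction is the same as the paper's exact-sequence argument.
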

The proof of this result is elementary, involving only the study of possible Chern classes of sums of line bundles and additivity of $c_3$.

\subsecl{The third Chern class and the structure of $\G_{V_0}$ for rank $3$ bundles}{subsec:proofrk3p5}

We first prove the additivity of $+_{V_0}$ for $c_3$ and complete a theorem stated in the introduction.
\begin{proof}[Proof of \Cref{prop:main_construction2}]
We have already established the group structure in \Cref{ex:rk3p5}. The identity element is $V_0 \oplus \underline{\C}$ by \Cref{cor:add_pi_zero}.

To show that the third Chern class $c_3$ is a homomorphism, consider the diagram
\begin{center}
\begin{tikzcd}[row sep=1em]
\tBU(3) \arrow[r, "c_3"]\arrow[d]& K(\Z,6)\ar[d] \\
BU(2) \arrow[r,"0"] \arrow[u,bend left, dashed, "s"] & * ,\arrow[u,bend right, dashed]
\end{tikzcd}
\end{center}
where the solid diagram is homotopy commutative
 and the subdiagram with only dashed vertical arrows is homotopy commutative. 
By \Cref{cor:group_hom_2}, we get a group homomorphism
$c_3\:[\CP^3,BU(3)]_{c} \to H^6(\CP^5,\Z).$
\end{proof}

In \cite{MO}, we prove that rank $3$ bundles on $\CP^5$ are determined by Chern classes except if $c_1(V)\equiv c_2(V) \equiv 0 \pmod 3$, in which case there are three pairwise non-isomorphic bundles with the same Chern classes as $V$. This proves the following result.
\begin{cor}\label{cor:kernel} The kernel of $c_3\: \G_{V_0} \to \Z$ is:
\begin{enumerate}
\item trivial if $c_1(V_0)\not\equiv 0 \pmod 3$ or $c_2(V_0)\not\equiv 0 \pmod 3$; and
\item isomorphic to $\Z/3$ if $c_1(V_0)\equiv c_2(V_0) \equiv 0 \pmod 3$.
\end{enumerate}
\end{cor}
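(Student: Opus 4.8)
The plan is to identify $\ker(c_3)$ with a set of topological bundles whose cardinality is computed in \cite{MO}, and then invoke the classification of groups of order at most $3$.

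First I would unwind the definitions. By construction, every $W \in \G_{V_0}$ satisfies $c_1(W) = c_1(V_0)$ and $c_2(W) = c_2(V_0)$, so $W$ lies in $\ker(c_3)$ if and only if $W$ has Chern classes $(c_1(V_0), c_2(V_0), 0)$ --- precisely the Chern classes of the identity element $V_0 \oplus \underline{\C}$. Conversely, any rank $3$ topological bundle on $\CP^5$ with these Chern classes is by definition an element of the underlying set of $\G_{V_0}$ (Proposition~\ref{prop:main_construction2}), and it manifestly maps to $0$ under $c_3$. Hence $\ker(c_3)$ is exactly the set of topological equivalence classes of rank $3$ bundles on $\CP^5$ with Chern classes $(c_1(V_0), c_2(V_0), 0)$, and this set contains $V_0 \oplus \underline{\C}$, so it is nonempty.

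Next I would apply the classification of \cite{MO}: the set of topological equivalence classes of rank $3$ bundles on $\CP^5$ with a prescribed triple of Chern classes has exactly one element unless the first two Chern classes are both divisible by $3$, in which case it has exactly three elements. Combining this with the previous paragraph, $\ker(c_3)$ has one element when $c_1(V_0) \not\equiv 0 \pmod 3$ or $c_2(V_0) \not\equiv 0 \pmod 3$, and three elements when $c_1(V_0) \equiv c_2(V_0) \equiv 0 \pmod 3$. Since $\ker(c_3)$ is a subgroup of the abelian group $\G_{V_0}$, in the first case it is the trivial group, and in the second case it is a group of order $3$, hence isomorphic to $\Z/3$, which is the unique group of that order.

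I do not expect a genuine obstacle here: the content of the corollary is entirely carried by the enumeration in \cite{MO}. The only points deserving care are bookkeeping ones --- that the identity $V_0 \oplus \underline{\C}$ really does have vanishing third Chern class (so that the count from \cite{MO} is being applied to a set that is in fact a subgroup, not merely a torsor or empty set), and that the underlying set of $\G_{V_0}$ contains \emph{every} rank $3$ bundle with the prescribed first two Chern classes, so that no bundle counted in \cite{MO} is omitted. Both are immediate from the setup in Example~\ref{ex:rk3p5} and the statement of Proposition~\ref{prop:main_construction2}.
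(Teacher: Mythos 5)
Your proof is correct and is essentially the paper's own argument: the paper likewise deduces the corollary directly from the classification in \cite{MO} (bundles determined by Chern classes except when $c_1\equiv c_2\equiv 0\pmod 3$, where there are three), identifying $\ker(c_3)$ with the set of bundles having Chern classes $(c_1(V_0),c_2(V_0),0)$. Your added bookkeeping --- that this set is nonempty because it contains the identity $V_0\oplus\underline{\C}$, and that a subgroup of order $3$ must be $\Z/3$ --- just makes explicit what the paper leaves implicit.
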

\begin{rmk}\label{rmk:c3_splits} Note that the image of $c_3$ is an infinite subgroup of $\Z$, since the conditions for three integers to be the Chern classes of a rank $3$ bundle on $\CP^5$ are a finite number of congruences (see \cite[Lemma 2.16]{MO}). In the first case in \Cref{cor:kernel}, $\G_{V_0}$ is abstractly isomorphic to $\Z$; in the second, $\G_{V_0} \simeq \Z \oplus \Z/3$.\end{rmk}

In \cite{MO}, we define a $\Z/3$-valued invariant $\rho$ for rank $3$ bundles $V$ on $\CP^5$ such that $c_1(V) \equiv c_2(V)\equiv 0 \pmod 3$. We prove that $c_1,c_2,c_3$ and $\rho$ are complete invariants of isomorphism classes of complex rank $3$ topological bundles on $\CP^5$. Additivity of $c_3$ leads to the following:
\begin{prob} For general $V_0$ with first and second Chern class divisible by $3$, what is a formula for $\rho(V+_{V_0}W)$ in terms of $\rho(V)$, $\rho(W)$, and $\rho(V_0)$? \end{prob}
Note that a necessary condition for $\rho$ to be a group homomorphism is that $\rho(V_0\oplus \underline{\C})=0$.

\subsecl{Constructing rank $3$ bundles on $\CP^5$ from sums of line bundles}{subsecl:interesting}

We begin this subsection by showing that there exist infinitely many isomorphism classes $V_0=\mathcal O(a)\oplus \mathcal O(b)$ on $\CP^3$ such that $\G_{V_0}$ contains another sum of line bundles that is not the group identity. This is a preliminary to prove \Cref{prop:not_sum_lb}.
First, we simplify notation.
\begin{defn} Given $n_1,\dots, n_k\in \Z$, let $\mathcal O(n_1,\dots,n_k):=\mathcal O({n_1})\oplus \dots \oplus \mathcal O({n_k}).$
\end{defn}
An element $\mathcal O(x,y,z)\in\G_{\mathcal O\!(a,b)}$ 
is equivalent to an integer solutions to the equations
\begin{equation}\label{eqeq1}x+y z=a+b\end{equation}
\[ xy+yz+zx = ab.\]
Note that, since the identity of $\G_{\mathcal O\!(a,b)}$ is 
$\mathcal O(a,b) \oplus \underline{\C}$, a sum of line 
bundles $\mathcal O(x,y,z)\in \G_{\mathcal O\! (a,b)}$ is a nonidentity element if and only if $c_3\left(\mathcal O(x,y,z)\right)=xyz$ is nonzero.

Let $c:=a-x$ and $d:=b-y$. Substituting in Equations~\eqref{eqeq1}, 
we obtain the equation
 $$Q:=c^2+d^2-bd-ac+cd =0.$$
This is the equation of a quadric hypersurface in $\mathbb{P}^3_{a,b,c,d}$, whose rational points can be found via standard methods. 
For any $u,l,v,w \in \Z$ we get solutions to Equations~\eqref{eqeq1}
\begin{align*}
a&=w(u^2+v^2+uv-lv)&x &= w(v^2+uv-lv)\\
b& = wul&y &= wu(l-v)\\
&&z &= wu(u+v).\end{align*}
\begin{rmk}\label{rmk:inf_many}The previous computation shows that there are infinitely many $\G_{V_0}$ where $V_0=\mathcal O(a,b)$, such that $W=\mathcal O(x,y,z)$ is a non-identity element of $\G_{V_0}$.
\end{rmk}
\begin{example}\label{example:explicit_small_index}
Consider $W=\mathcal O(2 , \smallminus 1, 2)$ and $V_0= \mathcal O(3,0)$.
By construction, $W \in \G_{V_0}$. Since $c_1$ is even and $c_1, c_2$ are both divisible by $3$, the Schwarzenberger conditions as in \cite[Lemma 2.16]{MO}
imply that there exists a bundle $W' \in \mathcal G_{V_0}$ with $c_3(W')=a$ if and only if $a$ is even.
By \Cref{rmk:c3_splits}, $\G_{V_0} \simeq \Z\oplus \Z/3$ and, under this identification, $c_3$ is projection onto the first factor. Since $c_3(W)=4$, this implies that the subgroup generated by $W$ is index $6$.
\end{example}

\begin{proof}[Proof of \Cref{prop:not_sum_lb}]
Let $V_0={\mathcal O(a,b)}$ and $W=\mathcal O(x,y,z)$ be as in \Cref{rmk:inf_many} above with $c_3(W)\neq 0$. Inductively, let \[+_{V_0}^n W:= W+_{V_0}\left(+_{V_0}^{n-1}W\right).\]
Assume for a contradiction that $+_{V_0}^nW$ is a sum of line bundles for all positive integers $n$. 
Let $a_1,a_2,a_3 \in \Z$ denote the Chern classes of $W$. Since $+_{V_0}$ 
preserves $c_1$ and $c_2$, and $c_3\: \G_{V_0} \to H^6(\CP^5;\Z)$ is a group homomorphism,
\begin{align*}
c_1(+_{V_0}^n W) & = a_1, & c_2(+_{V_0}^nW) &=a_2, \text{ and}&
c_3(+_{V_0}^nW) &= na_3.
\end{align*}
Thus, the equations
\begin{align*}
X+Y+Z & = a_1 &XYZ &= n a_3\\
XY+YX+ZX &= a_2
\end{align*}
have a solution $(X,Y,Z)=(x_n,y_n,z_n)$ for all $n\in \Z$.
Let $p$ be prime such that $p>|3 a_i|$ for $i=1,3$. 
Take $n=p$ in the above equations.
Since $c_3(W)=a_3\neq 0$, the third equation implies that $p$ divides one of $x_p$, $y_p$, or $z_p$.
Without loss of generality, $p$ divides $x_p$. But note that
\[|z_p y_p| \leq |a_3|< \frac{p}{3}\]
and so $|z_p| < \frac{p}{3},$ and $|y_p|  < \frac{p}{3}.$
Since $|x_p|$ is divisible by $p$, it is larger than $|y_p|$, $|z_p|$, 
and $|y_p+z_p|$. Therefore
\[ |x_p + y_p + z_p|  \geq |x_p| - |y_p|-|z_p|v \geq \frac{p}{3}.\]
On the other hand,
$|x_p+y_p+z_p| = |a_1|< \frac{p}{3},$ a contradiction.

Now consider the subgroup generated $W$. Since $c_3(W)\neq 0$, $\ker(c_3) \cap \langle W \rangle $ is trivial and we have an exact sequence of groups
\[\ker(c_3) \to \G_{V_0}/\langle W \rangle  \to \Z/c_3(W).\]
Finally, $\ker(c_3)$ is finite by \Cref{rmk:c3_splits} and $\Z/c_3(W)$ is finite, so $\G_{V_0}/H$ is also finite.\end{proof}

This section gives insight into the groups $\G_{V_0}$, but many questions remain.
\begin{prob}
\begin{enumerate}
\item For groups $\G_{V_0} \simeq \Z \oplus \Z/3$, what are generators for $3$-torsion?
\item Are the Horrocks bundles of rank $3$ on $\CP^5$ elements of some $\G_{V_0}$? If so, what subgroups do they generate?
\item What can be said about the structure of $\G_{V_0}$ for $V_0$ not a sum of line bundles?
\end{enumerate}
\end{prob}

\begin{appendices}
\section{The pointed Grothendieck construction for spaces}\label{app:grothendieck}

This appendix describes the pointed version straightening and unstraightening for functors to spaces, combining facts from \cite{HTT}. The particular goal is to make precise the statement that a diagram of spaces
\begin{equation}\label{eq:groupobdiag}\begin{tikzcd} X \arrow[r," f"] & Y\arrow[l,dashed, bend left, " s"] \end{tikzcd}\end{equation}
with $f$ is $n$-connective can equivalently be viewed as a functor from $Y$ to infinite loop spaces, after $(2n-2)$-truncation.

\begin{notation}\label{not:groth}
We establish terminology and conventions and provide references.
\begin{enumerate}
\item Let $\sset$ denote the category of simplicial sets and $\operatorname{Cat}_{\Delta}$ denote the category of simplicially enriched categories.
\item By $\infone$-category, we will mean a simplicial set that satisfies the inner horn condition (a quasi-category, \cite[1.1.2.4]{HTT}).
\item Let $\mathfrak C$ denote the left adjoint to the homotopy coherent nerve functor $N\: \op{Cat}_{\Delta} \to \operatorname{sSet}$ \cite[1.1.5]{HTT}. There is a Quillen equivalence of simplicial model categories
\begin{center}
\begin{tikzcd}
\operatorname{Cat}_{\Delta} \ar[r, bend right, "N" below]
& \sset\ar[l, bend right, "\mathfrak C " above]
\end{tikzcd}
\end{center}
between simplicial sets with the Joyal model structure and simplicially enriched categories with the Bergner model structure (see \cite[Section 1.1.5]{HTT} for discussion; \cite{Berg2}, \cite{Berg1}, and \cite{Joyal1} for a proof). 
\item Let ${\sset}_{/S}$ denote the simplicial model category of simplicial sets over $S$, endowed with the contravariant model structure. We have an associated simplicial category $\operatorname{ RFib}(S)$ obtained by taking fibrant-cofibrant objects.
\item Let $\Kan$ denote the $\infone$-category of Kan complexes obtained by taking fibrant-cofibrant objects in the simplicial model category of simplicial sets with the Kan model structure and applying $N$. Given a Kan complex $S$, let $\Kan_{/S}$ denote $\infone$ over-category of Kan complexes over $S$. 
\item Given a  $\infone$-category $C$ and a simplicial set $S$, we write $\Fun(S,C)$ for the simplicial set of maps from $S$ to $C$.
This is an $\infone$-category since $C$ is, and models functors from $S$ to $C$ \cite[1.2.7.2]{HTT}.
\item We ignore set-theoretic issues and refer the concerned reader to \cite[1.2.15]{HTT}. 
\end{enumerate}
\end{notation}
The fundamental result that we need is the following.
\begin{thm}[Lurie, \cite{HTT}]\label{thm:lurie_groth} Let $S$ be a Kan complex. There is an equivalence of $\infone$-categories
\begin{equation}\label{diag:Kan} \begin{tikzcd}
\Kan_{/S} \arrow[r, bend right, "\St"] & \Fun(S,\Kan) \arrow[l, bend right, "\Un"].
\end{tikzcd}
\end{equation}
\end{thm}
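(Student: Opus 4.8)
The plan is to obtain this as a special case of Lurie's general straightening--unstraightening equivalence, simplified by the hypothesis that $S$ is a Kan complex; since the theorem is due to Lurie, the work lies in assembling the relevant pieces rather than in any new argument, and I would indicate the assembly as follows.

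First I would record the model-categorical input. For any simplicial set $S$, straightening is the left adjoint of a Quillen equivalence between $\sset_{/S}$ with the contravariant model structure and the category of simplicial functors $\sset^{\mathfrak C[S]^{\op{op}}}$ with the projective model structure (induced from the Kan--Quillen model structure on $\sset$), with unstraightening as right adjoint; this is \cite[\S 2.2]{HTT}. A Quillen equivalence of combinatorial simplicial model categories induces an equivalence of the underlying $\infone$-categories, namely the homotopy-coherent nerves of the full simplicial subcategories of fibrant--cofibrant objects. Applying $N$ together with the rectification theorem \cite[\S 4.2]{HTT}, which identifies $N\big((\sset^{\mathfrak C[S]^{\op{op}}})^{\circ}_{\op{proj}}\big)$ with the functor $\infone$-category $\Fun\big(N(\mathfrak C[S]^{\op{op}}),\Kan\big)$, I would obtain an adjoint equivalence of $\infone$-categories between the localization of $\sset_{/S}$ at the contravariant equivalences and $\Fun\big(N(\mathfrak C[S]^{\op{op}}),\Kan\big)$.

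Next I would use that $S$ is a Kan complex to identify both sides with the objects appearing in the statement. On the target: the unit map $S \to N(\mathfrak C[S])$ is always a categorical equivalence, and passing to opposites while using that a Kan complex is canonically equivalent to its opposite gives $N(\mathfrak C[S]^{\op{op}}) \simeq S$, hence $\Fun(N(\mathfrak C[S]^{\op{op}}),\Kan) \simeq \Fun(S,\Kan)$. On the source: over a Kan complex every right fibration is a Kan fibration, and the contravariant equivalences over $S$ are exactly the maps that become weak homotopy equivalences on total objects, so the contravariant model structure on $\sset_{/S}$ coincides with the model structure sliced from the Kan--Quillen model structure on $\sset$; its $\infone$-category of fibrant--cofibrant objects is $\Kan_{/S}$ by definition (see \cite[\S 2.1]{HTT}). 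Composing the two identifications yields $\Kan_{/S} \simeq \Fun(S,\Kan)$, with $\St$ and $\Un$ the functors induced by the derived straightening and unstraightening; tracing definitions confirms that $\St$ sends an object $X \to S$ to the functor whose value at a point is the corresponding homotopy fiber, and $\Un$ sends a functor to its total object.

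The hard part is the model-categorical bookkeeping in the previous paragraph: verifying that the contravariant model structure and the sliced Kan--Quillen model structure on $\sset_{/S}$ have the same weak equivalences and the same fibrant objects once $S$ is a Kan complex, and checking that rectification produces the genuine functor $\infone$-category $\Fun(S,\Kan)$ rather than an a priori larger category of strictly commuting diagrams. These comparisons are exactly where the Kan-complex hypothesis is used, and full proofs of all the ingredients are in \cite[\S\S 2.1, 2.2, 4.2]{HTT}.
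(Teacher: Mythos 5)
Your proposal is correct and follows essentially the same route as the paper: both invoke Lurie's straightening--unstraightening Quillen equivalence between $\sset_{/S}$ (contravariant model structure) and projectively fibrant--cofibrant simplicial functors, pass to underlying $\infone$-categories via the homotopy coherent nerve and rectification, identify the slice with $\Kan_{/S}$ using that right fibrations over a Kan complex are Kan fibrations, and dispose of the $(-)^{\op{op}}$ by noting a Kan complex is equivalent to its opposite. The only difference is that you unpack by hand the model-structure comparison that the paper delegates to a citation of \cite[3.1.5.1(A3)]{HTT}.
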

\begin{proof} By
\cite[2.2.3.11]{HTT}, for any simplicial set
we have an equivalence of simplicial categories 
\begin{equation}\label{eq:frak} \begin{tikzcd}
\operatorname{RFib}(S) \arrow[r, bend right, "\St"] 
& ({\op{sSet}}^{ {\mathfrak{C}[S]}^{\op{op}} })^\circ, \arrow[l, bend right, "\Un"]
\end{tikzcd}
\end{equation}
where $\op{sSet}$ has the Kan model structure, ${\op{sSet}}^{ {\mathfrak{C}[S]}^{\op{op}} }$ has
 the projective model structure, and $(-)^\circ$ indicates we take fibrant-cofibrant objects.
Since $S$ is a Kan complex, \cite[3.1.5.1(A3)]{HTT} implies $\Kan_{/S}\simeq N(\RFib(S))$ as $\infone$-categories. 

By \cite[4.2.4.4]{HTT}, applying the homotopy coherent nerve to the right side of Equation~\eqref{eq:frak}
 recovers $\op{Fun}(S^{\op{op}}, \op{Kan}).$ By \cite[Section 57]{Rezkqcat} combined with \cite[Proposition 14.14]{Rezkqcat}, every Kan complex is equivalent to its opposite as an $\infone$-category.
 \end{proof}
To obtain a version for spaces over a base with a section, we take pointed objects on either side of \Cref{thm:lurie_groth}. 

\begin{defn} Given an $(\infy,1)$-category $C$ with final object $t$ \cite[1.2.12]{HTT}, 
$C_*$ denote the full $\infone$-subcategory of $\Fun(\Delta^1,C)$ spanned by functors that restrict to $t$ on $0$.
Explicitly, if $\op{fib}_t$ denotes the (1-categorical) fiber in $\sset$ at $t$, then
$$C_* := \operatorname{fib}_{t}\big(\Fun(\Delta^1,C) \to \Fun(\Delta^0, C)\big)$$ 
where the map is given by pre-composition with $\{0\} \simeq \Delta^0 \hookrightarrow \Delta^1.$ \end{defn}

\begin{rmk}Given an $\infone$-category $C$ with final object $t$, 
the infinity category $C_*$ is also modeled by the $\infone$-under category $C_{t/}$. 
See \cite[1.2.9]{HTT} for a discussion of $\infone$-under categories and \cite[4.2.1.5 and 7.2.2.8]{HTT} for the equivalence.\end{rmk}
Since equivalent $\infone$-categories support the same category theory, the equivalences $\St$
and $\Un$ will induce one on pointed objects.  

\begin{lem}\label{lem:equiv_pointed} An equivalence of $\infone$-categories induces an equivalence upon taking pointed objects. \end{lem}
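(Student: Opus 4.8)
The plan is to show that an equivalence $F\colon C \xrightarrow{\ \sim\ } D$ of $\infone$-categories induces an equivalence $F_*\colon C_* \xrightarrow{\ \sim\ } D_*$ on pointed objects, using the description $C_* = \operatorname{fib}_{t_C}\big(\Fun(\Delta^1,C)\to\Fun(\Delta^0,C)\big)$. First I would note that $F$ induces $\Fun(\Delta^1,C)\to\Fun(\Delta^1,D)$ and $\Fun(\Delta^0,C)\to\Fun(\Delta^0,D)$ by post-composition, and these are equivalences of $\infone$-categories since $\Fun(K,-)$ preserves categorical equivalences (\cite[1.2.7.2]{HTT} together with the fact that $\Fun(K,-)$ is a right Quillen functor for the Joyal model structure, so it preserves weak equivalences between fibrant objects). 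These fit into a commuting square with the restriction maps $\Fun(\Delta^1,-)\to\Fun(\Delta^0,-)$ induced by $\{0\}\hookrightarrow\Delta^1$, which are moreover categorical fibrations (in fact isofibrations: the map $\{0\}\hookrightarrow\Delta^1$ induces a fibration for the Joyal model structure, e.g. by \cite[2.1.2.2]{HTT}-type arguments or directly since it is a cofibration between cofibrant objects and $C$ is fibrant).

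Next I would invoke the compatibility of homotopy pullbacks with equivalences: since both vertical maps are (categorical, hence Joyal-) fibrations, the strict fibers over $t_C$ and $t_D$ compute homotopy fibers, and $F$ carries the final object $t_C$ to the final object $t_D$ up to equivalence (\cite[1.2.12]{HTT}, since equivalences preserve final objects). Choosing a point of $\Fun(\Delta^0,D)$ representing $t_D$ in the image of $t_C$, the induced map on strict fibers $C_* = \operatorname{fib}_{t_C} \to \operatorname{fib}_{t_D} = D_*$ is then a categorical equivalence by the standard fact that a map of Joyal fibrations inducing equivalences on total objects and on bases induces an equivalence on fibers (the pasting/right-properness argument in the Joyal model structure, cf. \cite[Section 57]{Rezkqcat} or the corresponding statement in \cite{HTT}). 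I would phrase this cleanly by passing to the equivalent model $C_{t_C/}$ via the remark preceding the lemma, where the statement becomes that an equivalence preserves under-categories of objects it carries to equivalent objects, which is immediate from \cite[1.2.9]{HTT} and \cite[4.2.1.5]{HTT}.

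The main obstacle is purely bookkeeping: one must be careful that all the maps in sight are genuine categorical fibrations so that strict fibers model homotopy fibers, and that the chosen basepoints match up under $F$ up to the contractible choice of equivalence $t_D \simeq F(t_C)$. Once that is arranged, right-properness of the Joyal model structure (or equivalently the relevant lemmas in \cite{HTT}) does all the work. I expect the cleanest writeup uses the under-category model $C_{t/}$ rather than the $\Fun(\Delta^1,-)$ model, reducing the lemma to the statement that equivalences of $\infone$-categories induce equivalences on slices under a fixed object — which follows formally from $2$-out-of-$3$ and the fact that the slice projection is conservative on the relevant part of the category.
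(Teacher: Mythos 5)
Your proposal is correct, and its closing remark (pass to the under-category model $C_{t/}$ and quote the slice-category statement from \cite[1.2.9]{HTT}) is exactly the paper's proof: the paper disposes of the lemma in one line by citing \cite[1.2.9.3]{HTT}, dualized and specialized to the final object. The bulk of your argument, however, is a genuinely different and more hands-on route through the $\Fun(\Delta^1,-)$ model: you exhibit $C_*$ as the strict fiber of the isofibration $\Fun(\Delta^1,C)\to\Fun(\Delta^0,C)$ and argue that equivalences on total objects and bases descend to the fibers. That route works and has the virtue of not needing the (un)dualization of the over-category statement, at the cost of the basepoint bookkeeping you describe. Two small corrections to the justification. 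First, the Joyal model structure is \emph{not} right proper (pull the inner anodyne $\Lambda^2_1\hookrightarrow\Delta^2$ back along the isofibration $\Delta^1\to\Delta^2$ hitting the long edge to see this), so you should not lean on right properness; what you actually need is the weaker and true statement that the strict pullback of a categorical fibration with fibrant codomain computes the homotopy pullback, so that fibers of such fibrations are invariant under equivalences of the cospan. Second, your parenthetical that the under-category version ``follows formally from $2$-out-of-$3$ and conservativity of the slice projection'' is not a proof as stated — conservativity of $C_{t/}\to C$ does not by itself transport the equivalence — but this is harmless since \cite[1.2.9.3]{HTT} is precisely the statement required and you cite its neighborhood.
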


\begin{proof} This is the dual statement of \cite[1.2.9.3]{HTT} (dualized, for example, following the method of \cite[1.2.9.5]{HTT}), specialized to the case of a final object.
\end{proof}
We can apply \Cref{lem:equiv_pointed} directly to the equivalences of \Cref{thm:lurie_groth}, but to understand the result we first describe pointed objects on the left side of Diagram~\eqref{diag:Kan}.

\begin{lem} For any $\infone$-category $C$ with a final object $t$, the $\infone$-category $\Fun(S,C)$ has a final object the constant functor at $t$. There is an equivalence of $\infone$-categories $\Fun(S,C)_*\simeq \Fun(S,C_*)$.
\end{lem}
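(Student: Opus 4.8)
The plan is to prove the two claims separately: first that the constant functor $\underline{t}\colon S \to C$ at the final object $t$ is final in $\Fun(S,C)$, and then that pointed objects of $\Fun(S,C)$ are computed pointwise, giving $\Fun(S,C)_* \simeq \Fun(S,C_*)$.

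First I would establish that $\underline t$ is a final object of $\Fun(S,C)$. The cleanest route is to observe that the constant-diagram functor $C \to \Fun(S,C)$ is right adjoint to the colimit functor (or more elementarily, it has a left adjoint given by the colimit over $S$, which exists because $C$ has all the relevant colimits — actually we only need that the terminal object is preserved), so right adjoints preserve terminal objects; alternatively, and more self-containedly, one checks directly that for any $F \in \Fun(S,C)$ the mapping space $\Map_{\Fun(S,C)}(F,\underline t)$ is contractible. By \cite[5.1.2.2]{HTT} (or the description of mapping spaces in functor categories) this mapping space is the limit over $S$ of the spaces $\Map_C(F(s), t)$, each of which is contractible since $t$ is final in $C$; a limit of contractible spaces over any simplicial set is contractible, so $\underline t$ is final. (One may instead cite \cite[1.2.12.4]{HTT}, which records exactly that $\Fun(S,C)$ has a final object given by the constant functor whenever $C$ does.)

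Next I would produce the equivalence $\Fun(S,C)_* \simeq \Fun(S,C_*)$. By definition, $\Fun(S,C)_*$ is the fiber of the restriction map $\Fun(\Delta^1, \Fun(S,C)) \to \Fun(\Delta^0,\Fun(S,C))$ over the final object $\underline t$. Using the exponential law for $\infty$-categories (\cite[1.2.7.3]{HTT} applied to $\Fun(\Delta^1 \times S, C) \simeq \Fun(\Delta^1, \Fun(S,C)) \simeq \Fun(S,\Fun(\Delta^1,C))$), this restriction map is identified with the map $\Fun(S, \Fun(\Delta^1,C)) \to \Fun(S,\Fun(\Delta^0,C))$ obtained by post-composition with the restriction functor $\Fun(\Delta^1,C) \to \Fun(\Delta^0,C) = C$. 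Taking the fiber over the constant functor at $t$ and using that fibers (i.e. pullbacks) commute with $\Fun(S,-)$ — since $\Fun(S,-)$ is a right adjoint, or directly since limits in functor categories are computed pointwise in the target direction — gives
\begin{align*}
\Fun(S,C)_* &\simeq \operatorname{fib}_{\underline t}\big(\Fun(S,\Fun(\Delta^1,C)) \to \Fun(S,C)\big) \\
&\simeq \Fun\big(S,\, \operatorname{fib}_t(\Fun(\Delta^1,C) \to C)\big) \\
&= \Fun(S, C_*),
\end{align*}
where in the middle step the fiber over the constant functor $\underline t$ is the functor category into the fiber over $t$ because a natural transformation landing in the constant functor $\underline t$ is precisely a functor $S \to C_*$.

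The main obstacle, such as it is, is purely bookkeeping: making sure that the point-set fiber $\operatorname{fib}$ used in the definition of $C_*$ is a homotopy fiber in the relevant sense, so that it is invariant under the equivalences we are applying and commutes with $\Fun(S,-)$. This is handled by noting that $\Fun(\Delta^1,C) \to \Fun(\Delta^0,C)$ is an inner fibration (indeed a categorical fibration onto its image, by \cite[2.3.2.5]{HTT}), so its strict fiber computes the homotopy fiber; alternatively one avoids the issue entirely by using the equivalent model $C_* \simeq C_{t/}$ together with the fact that $\Fun(S, C_{t/}) \simeq \Fun(S,C)_{\underline t /}$ for the final object $\underline t$, which is \cite[1.2.9.3]{HTT}-type reasoning applied in the functor category. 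Everything else is formal manipulation of adjunctions and the exponential law.
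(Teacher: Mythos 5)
Your proposal is correct and takes essentially the same approach as the paper: the final object of $\Fun(S,C)$ is identified as the constant functor via pointwise limits (the paper cites the dual of HTT 5.1.2.3, you compute the mapping space as a limit of contractible spaces, which is the same fact), and the equivalence $\Fun(S,C)_*\simeq\Fun(S,C_*)$ is obtained exactly as in the paper by the exponential law $\Fun(\Delta^1,\Fun(S,C))\simeq\Fun(S,\Fun(\Delta^1,C))$ followed by commuting the fiber past $\Fun(S,-)$. Your extra remark on strict versus homotopy fibers is a point the paper glosses over (its identifications are strict isomorphisms of simplicial sets, so the issue is harmless), but it does not change the argument.
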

\begin{proof} The first statement follows from the dual of \cite[5.1.2.3]{HTT}.
For the second, recall that $\Fun(S,C)_*$ is modeled by the fiber at the final object of the map
$$ \Fun\left(\Delta^1,\Fun(S,C)\right) \to \Fun\left(\Delta^0, \Fun(S,C)\right)$$ given by restricting to $\{0\} \hookrightarrow \Delta^1$.
Moreover, there is an adjunction $ S \times -\dashv \Fun(S,-)$ as enriched functors $\sset \to \sset$, since $\Fun(S,-)$ is an internal hom to in the monoidal category $(\sset, \times, \Delta^0)$. So we get isomorphisms of simplicial sets:
\begin{align*}&\operatorname{fib}\left( \Fun\left(\Delta^1,\Fun(S,C)\right) \to \Fun\left(\Delta^0, \Fun(S,C)\right)\right) \\ &\simeq \operatorname{fib}\left(\Fun\left(\Delta^1\times S,C\right) \to \Fun
\left(\Delta^0\times S,C
\right)\right) \\
&\simeq \operatorname{fib}\left(\Fun\left(S, \Fun(\Delta^1, C)\right) \to \Fun\left( S,\Fun(\Delta^0,C)\right)
\right).\end{align*}
Since right adjoints preserve limits we have that:
\begin{align*}\operatorname{fib}\big(\Fun\left(S, \Fun(\Delta^1, C)\right) \!\to\! \Fun\left( S,\Fun(\Delta^0,C)\right)\big)  \simeq \Fun\left(S, \op{fib} \left(\Fun(\Delta^1,C)\! \to\! \Fun(\Delta^0, C)\right) \right).\end{align*}
Tracing through our equivalences, we have shown that $\Fun(S,C)_* \simeq \Fun(S, C_*)$.
\end{proof}

\begin{cor}\label{cor:compatible}

Straightening and unstraightening induce mutually inverse an equivalences of $\infone$-categories $\St_*\: {(\Kan_{/S}})_* \to \Fun(S,\Kan_*)$ and $\Un_*\:\Fun(S,\Kan_*) \to {(\Kan_{/S}})_* .$
\end{cor}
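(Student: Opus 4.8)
The plan is to assemble Corollary~\ref{cor:compatible} from three ingredients already in hand: Theorem~\ref{thm:lurie_groth}, which gives mutually inverse equivalences $\St\colon \Kan_{/S}\to \Fun(S,\Kan)$ and $\Un\colon \Fun(S,\Kan)\to \Kan_{/S}$; Lemma~\ref{lem:equiv_pointed}, which promotes an equivalence of $\infone$-categories to one on pointed objects; and the identification $\Fun(S,C)_*\simeq \Fun(S,C_*)$ from the lemma just above, valid for any $C$ with a final object. First I would note that both $\Kan_{/S}$ and $\Fun(S,\Kan)$ have final objects — for the former this is $\operatorname{id}_S\colon S\to S$ (the final object of any over-category, cf.\ \cite[1.2.12]{HTT}), for the latter the constant functor at $\Delta^0$, as recorded just above — so that $(\Kan_{/S})_*$ and $(\Fun(S,\Kan))_*$ are defined.

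Applying Lemma~\ref{lem:equiv_pointed} to the equivalence of Theorem~\ref{thm:lurie_groth} then yields mutually inverse equivalences between $(\Kan_{/S})_*$ and $(\Fun(S,\Kan))_*$; concretely these arise by applying $\Fun(\Delta^1,-)$ to $\St$ and $\Un$ and restricting along $\{0\}\hookrightarrow \Delta^1$ to the fibers over the respective final objects, and since $\Fun(\Delta^1,-)$ carries mutually inverse equivalences to mutually inverse equivalences (and restriction to a fiber preserves this), the induced pair is still mutually inverse. Composing with the equivalence $(\Fun(S,\Kan))_*\xrightarrow{\ \sim\ }\Fun(S,\Kan_*)$ from the preceding lemma (the case $C=\Kan$), I would \emph{define} $\St_*$ and $\Un_*$ to be the two resulting composites. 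Each factor is an equivalence and the inner ones are mutually inverse, so $\St_*$ and $\Un_*$ are mutually inverse equivalences of $\infone$-categories, as claimed.

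The only point requiring genuine care — the rest being bookkeeping — is verifying that $\St_*$ and $\Un_*$ deserve the name ``pointed (un)straightening'', i.e.\ that they are compatible with the forgetful functors to $\Kan_{/S}$ and $\Fun(S,\Kan)$: forgetting basepoints and then (un)straightening should agree with first performing (un)straightening in the pointed sense. This is a naturality statement: the forgetful functor $C_*\to C$ is evaluation at $\{1\}\hookrightarrow \Delta^1$, a natural transformation of functors in $C$, so it commutes both with the equivalences produced by $\Fun(\Delta^1,-)$ and with the chain of fiber identifications defining $\Fun(S,\Kan)_*\simeq \Fun(S,\Kan_*)$ in the previous lemma. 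Once this square of forgetful functors commutes, the heuristic description of $\St_*$ in Notation~\ref{note:keyproperties}(1) — sending $f\colon X\to Y$ over $Y$ with section $s$ to the functor $p\mapsto f^{-1}(p)$ pointed by $s(p)$ — drops out of the corresponding unpointed statement for $\St$ together with the observation that a pointed object of $\Kan_{/S}$ is precisely a Kan complex over $S$ equipped with a section. I expect this compatibility check to be the main (and only mild) obstacle; everything else is transport of structure along equivalences already established.
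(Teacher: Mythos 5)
Your proposal is correct and follows exactly the route the paper intends: apply Lemma~\ref{lem:equiv_pointed} to the equivalence of Theorem~\ref{thm:lurie_groth} and then compose with the identification $\Fun(S,\Kan)_*\simeq\Fun(S,\Kan_*)$ from the preceding lemma. The paper leaves the corollary without a written proof precisely because it is this assembly; your added remarks on final objects and compatibility with the forgetful functors are reasonable supplementary care but not a departure from the argument.
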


We record one more result about pointed objects in $\infone$-categories.
\begin{lem}\label{lem:pointed_adjunction} Let $\mathcal C$ be a $\infone$-category with a final object $t$ and with coproducts. Then there is an adjunction of $\infone$-categories
\[\begin{tikzcd}
C \ar[r, bend right, "+" below] \ar[r, phantom, "\dashv" {labl, near end}]
& C_* \ar[l, bend right, "\forget " above]\end{tikzcd}
\]
\end{lem}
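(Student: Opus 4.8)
The plan is to reduce to under-$\infty$-categories and invoke Lurie's representability criterion for the existence of adjoints. Recall from the remark following the definition of $C_*$ that there is an equivalence $C_*\simeq C_{t/}$ under which $\forget$ becomes the canonical projection $q\colon C_{t/}\to C$; so it suffices to build a left adjoint $L$ to $q$ and to check that, on objects, $L(x)$ is the coproduct inclusion $\iota_x\colon t\to t\sqcup x$. By \cite[Proposition 5.2.4.2]{HTT}, $q$ admits a left adjoint provided that for every $x\in C$ the functor $C_{t/}\to\s$ sending $v=(t\xrightarrow{\beta}b)$ to $\operatorname{Map}_C(x,b)$ is corepresentable, and I claim it is corepresented by $\iota_x$.

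To verify the claim, recall the fiber sequence $\operatorname{Map}_{C_{t/}}(u,v)\to\operatorname{Map}_C(a,b)\xrightarrow{\alpha^*}\operatorname{Map}_C(t,b)$ computing mapping spaces in an under-category, where $u=(t\xrightarrow{\alpha}a)$, $v=(t\xrightarrow{\beta}b)$, and the fiber is taken over $\beta$. Applying this with $u=\iota_x$ (so $a=t\sqcup x$), the universal property of the coproduct gives a natural equivalence $\operatorname{Map}_C(t\sqcup x,b)\simeq\operatorname{Map}_C(t,b)\times\operatorname{Map}_C(x,b)$ under which $\iota_x^*$ corresponds to the projection onto the first factor; hence the fiber over $\beta$ is $\operatorname{Map}_C(x,b)\simeq\operatorname{Map}_C(x,q(v))$, naturally in $v$. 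This is exactly the asserted corepresentability, so $q$ admits a left adjoint $L$ with $L(x)\simeq(\iota_x\colon t\to t\sqcup x)$; transporting back along $C_*\simeq C_{t/}$ yields the functor $+$ of the statement, with unit the coproduct inclusion $x\to x\sqcup t=\forget({+}(x))$.

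The only content-bearing step is upgrading this to the input required by \cite[Proposition 5.2.4.2]{HTT}: one must check that the decomposition $\operatorname{Map}_C(t\sqcup x,b)\simeq\operatorname{Map}_C(t,b)\times\operatorname{Map}_C(x,b)$ intertwines $\iota_x^*$ with the first projection compatibly as $v$ varies over $C_{t/}$, which is a routine unwinding of the universal property of coproducts. Alternatively, one can sidestep the representability criterion and build $+$ by hand: the functor $t\sqcup(-)\colon C\to C$ exists since $C$ has coproducts, the inclusions $\iota_x$ assemble into a natural transformation from the constant functor $\operatorname{const}_t$, and such a natural transformation is precisely a lift of $t\sqcup(-)$ through $q$; the same mapping-space computation then exhibits this lift as left adjoint to $q$. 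Either way, no serious obstacle arises once the under-category mapping-space formula and the universal property of coproducts are in hand.
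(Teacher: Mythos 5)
Your argument is correct, and its mathematical core coincides with the paper's: both proofs reduce to the fiber sequence $\Maps_{C_*}(t\to c\sqcup t,\, t\to y)\to\Maps_C(c\sqcup t,y)\to\Maps_C(t,y)$ together with the coproduct decomposition $\Maps_C(c\sqcup t,y)\simeq\Maps_C(c,y)\times\Maps_C(t,y)$, under which the restriction map becomes a projection and the fiber is identified with $\Maps_C(c,y)$. The only genuine difference is which device from \cite[5.2]{HTT} packages this computation into an adjunction. The paper constructs the functor $+$ and the candidate unit $u_c\colon c\to t\sqcup c$ up front and applies the unit-transformation criterion \cite[5.2.2.8]{HTT}, checking that the composite $\Maps_{C_*}(t\to c\sqcup t,\,t\to y)\to\Maps_C(c\sqcup t,y)\to\Maps_C(c,y)$ is an equivalence; you instead pass to $C_{t/}$ and invoke the pointwise corepresentability criterion \cite[5.2.4.2]{HTT}, which has the mild advantage that the left adjoint is produced by the criterion rather than built by hand, at the cost of routing through the under-category model of $C_*$. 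Your ``alternative'' second route is essentially the paper's proof verbatim. The one point you flag as remaining --- that the equivalence $\Maps_{C_{t/}}(\iota_x,v)\simeq\Maps_C(x,q(v))$ is the one induced by composition with the coproduct inclusion $x\to t\sqcup x$, compatibly as $v$ varies --- is exactly the verification the paper carries out in its final commutative diagram, so nothing substantive is missing.
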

\begin{proof}
We write $\op{F}$ for the forgetful functor. This functor is given on objects by $(t \to Y\big) \mapsto Y$.
The functor $+$ is given on objects by $c \mapsto c \sqcup t,$ where $\sqcup$ is the $\infone$-coproduct in $C$ \cite[4.4.1]{HTT}. 

To show $+$ is left-adjoint to $\op{F}$, we use \cite[5.2.2.8]{HTT}: it suffices to provide a unit transformation $$u\:\operatorname{Id}_C \to \op{F} \circ +$$ in $\Fun(C,C)$ such that a certain associated composite is an isomorphism in the homotopy category. Let $u_c\: c \to t \sqcup c$ be the structure map of the coproduct. We must show

\begin{center}
\begin{tikzcd} \Maps_{C_*}(t \to c \sqcup t, t \to y) \arrow[r," \op{apply \,\, F}"] & \Maps_{C}\big(\op{F}(t \to c \sqcup t), \op{F}(t \to y)\big) \arrow[r, "u_c \circ -"] &\Maps_C(c,y)\end{tikzcd}\end{center}
is an isomorphism in the homotopy category of spaces for all $c$ and $y$ in $C$.
Combining \cite[5.5.5.12]{HTT} with \cite[Lemma 7.2.2.8; Proposition 4.2.1.5]{HTT}, we have an equivalence of spaces:
$$\Maps_{C_*}(t \to c \sqcup t, t \to y) \simeq \operatorname{hofib}\Big(\Maps_C(c \sqcup t, y) \to \Maps_C(t,y) \Big),$$
where the arrow $\Maps_C(c \sqcup t, y) \to \Maps_C(t,y) $ is induced by precomposing with the given map
$t \to c \sqcup t,$ and the fiber is taken over
$t \to y\in C_*$ viewed as an object in $\Maps_C(t,y)$. Consider the homotopy commutative diagrams of spaces:
\begin{center}
\begin{tikzcd}[row sep=.2in]
\Maps_C(c \sqcup t, y) \arrow[r]\arrow[d,"\simeq"]& \Maps_C(t,y)\arrow[d,"="] \\
\Maps_C(c,y) \times \Maps_C(t,y) \arrow[r, "\pi_2"] &\Maps_C(t,y),
\end{tikzcd}
\end{center}
This implies
\begin{align*}\Maps_{C_*}\big(t \to c \sqcup t, t \to y\big) & \simeq \operatorname{hofib}\big(\pi_2\: \Maps_C(c,y) \times \Maps_C(t,y) \to \Maps_C(t,y)\big)\\
& \simeq \Maps_C(c,y).\end{align*}  We have a homotopy
commutative diagram 
\begin{center}
\begin{tikzcd}[column sep = .2in]
\Maps_{C_*}(t \to c \sqcup t, t \to y)\arrow[d,"\operatorname{apply \,\,\, F}" left]\arrow[r,"\simeq"]& 
Z\arrow[d]\arrow[r,"\simeq"] &
 \Maps_C(c,y) \arrow[dl,"1\times (t \to y)"]
\arrow[ddl,bend left,"1"below]\\
\Maps_{C}( c \sqcup t, y)\arrow[r,"\simeq"] \arrow[d,"u_c \circ -" left] &\Maps_C(c,y) \times \Maps_C(t,y) \arrow[d, "\pi_1" left] \\
\Maps_C(c,y)\arrow[r,"="] & \Maps_C(c,y)
\end{tikzcd}
\end{center}
where $Z:=\op{fib}\left(\left(\Maps_C(c,y) \times \Maps_C(t,y)\right) \xrightarrow{\pi_2} \Maps_C(t,y)\right)$. The rightmost map is a weak homotopy equivalence, as are all the horizontal arrows, so the left vertical composite is too.
\end{proof}

\end{appendices}

\bibliographystyle{abbrv}
\bibliography{top_horrocks_ARXIV_2}

\end{document}